\DeclareMathOperator{\Lim}{Lim} \DeclareMathOperator{\diam}{diam}
\DeclareMathOperator{\inte}{int}
\newtheorem{theorem}{Theorem}[section]
\newtheorem{corollary}[theorem]{Corollary}
\newtheorem{proposition}[theorem]{Proposition}
\newtheorem{observation}[theorem]{Observation}
\theoremstyle{definition}
\newtheorem{example}[theorem]{Example}
\newtheorem{examples}[theorem]{Examples}
\theoremstyle{remark}
\newtheorem{remark}[theorem]{Remark}
\newtheorem{claim}{Claim}[theorem]
\numberwithin{equation}{section}
\begin{document}
\title{More absorbers in hyperspaces}

\author[P. Krupski]{Pawe\l\ Krupski}
\email{Pawel.Krupski@math.uni.wroc.pl}
\address{Mathematical Institute, University of Wroc\l aw, pl.
Grunwaldzki 2/4, 50--384 Wroc\l aw, Poland}
\author[A. Samulewicz]{Alicja Samulewicz}
\email{Alicja.Samulewicz@polsl.pl}
\address{Institute of Mathematics, Faculty of Applied Mathematics, Silesian University of Technology, ul. Kaszubska 23, 44-101 Gliwice, Poland}
\subjclass[2010]{Primary 57N20; Secondary 54H05, 54F45}
\keywords{absorber, aposyndetic, Borel set, coanalytic set, colocally connected, continuum, C-space, Hilbert cube, Hilbert cube manifold, hyperspace, infinite-dimensional space, locally connected, manifold, weakly infinite-dimensional space}

\begin{abstract} The family  of all subcontinua that  separate  a compact connected $n$-manifold $X$ (with or without boundary), $n\ge 3$,  is an $F_\sigma$-absorber in the hyperspace $C(X)$ of nonempty subcontinua of $X$.  If $D_2(F_\sigma)$ is the  small Borel class of spaces which are differences of two  $\sigma$-compact sets, then the family  of all $(n-1)$-dimensional continua that separate   $X$ is a $D_2(F_\sigma)$-absorber in $C(X)$. The families of nondegenerate colocally connected or aposyndetic continua in $I^n$ and of at least two-dimensional or decomposable Kelley continua   are $F_{\sigma\delta}$-absorbers in the hyperspace $C(I^n)$  for $n\ge 3$.  The hyperspaces  of all weakly infinite-dimensional continua and of $C$-continua of  dimensions at least 2 in a compact connected Hilbert cube manifold $X$ are $\Pi ^1_1$-absorbers in  $C(X)$.  The family of all hereditarily infinite-dimensional compacta in the Hilbert cube $I^\omega$ is  $\Pi ^1_1$-complete in $2^{I^\omega}$.
\end{abstract}

\maketitle

\section{Introduction}
The theory of absorbing sets  was well developed in the eighties and nineties of the last century (see~\cite{B} and~\cite{M}). Since any two absorbers in a Hilbert cube $\mathcal Q$ of a given Borel or projective class are homeomorphic via arbitrarily small ambient homeomorphisms of $\mathcal Q$, it provides a powerful technique of characterizing  some subspaces of  the hyperspaces  $2^X$ of all closed nonempty subsets of a nondegenerate Peano continuum $X$ or $C(X)$ of all subcontinua of  $X$ (if $X$ contains no free arcs). Nevertheless, the list of natural examples of such spaces is not too long and they usually are considered in $X$ being a Euclidean or the Hilbert cube.

Let $I=[0,1]$ be the closed unit interval with the Euclidean metric. The following subspaces of a respective hyperspace are absorbers of Borel class $F_\sigma$ (otherwise known as cap-sets), so they are homeomorphic to the pseudo-boundary $B(I^\omega)=\{(x_i)\in I^\omega: \,\exists\,i\, (x_i\in\{0,1\})\,\}$, a standard $F_\sigma$-absorber in the Hilbert cube $I^\omega$:
\begin{examples}\label{ex1}\hfill
\begin{enumerate}
\item
The subspace $\mathcal D_n(X)$ of $2^X$ consisting of all compacta of covering dimensions $\ge n\ge 1$, where $X$ is a locally connected continuum each of whose open non-empty subset has  dimension $\ge n$~\cite{C3} (for $X=I^\omega$, see~\cite{DMM}),
\item
The family of all decomposable continua in $I^n$, $n\ge 3$~\cite{Sam2},
\item
The family of all compact subsets  (subcontinua) with nonempty interiors in a locally connected nondegenerate continuum (containing no free arcs)~\cite{CM},
\item
The family of all compact subsets that block all subcontinua of  a locally connected nondegenerate continuum which is not separated by any  finite subset~\cite{IK}.
\end{enumerate}
\end{examples}

\begin{examples}\label{ex2}\hfill
Known $F_{\sigma\delta}$-absorbers  include two standard ones $(B(I^\omega))^\omega$ (in $(I^\omega)^\omega$) and  $\widehat{c_0}=\{(x_i)\in I^\omega: \lim_i x_i =0\}$ (in $I^\omega$)~\cite{DMM} and
\begin{enumerate}
\item
The subspace of $2^{I^\omega}$  of all infinite-dimensional compacta~\cite{DMM},
\item
The subspace of $C(I^n)$ of all locally connected subcontinua of  $I^n$, $n\ge 3$~\cite{GM},
\item
The subspaces of  $C(I^2)$ of all arcs~\cite{C2} and of all absolute retracts ~\cite{CDGM}.
\end{enumerate}
\end{examples}

\begin{examples}\label{ex4}
If $D_2(F_\sigma)$ is the class of all subsets of the Hilbert cube that are differences of two $F_\sigma$-sets, then
a standard  $D_2(F_\sigma)$-absorber is the subset $B(I^\omega)\times s$  of $I^\omega\times I^\omega$ ($s=I^\omega\setminus B(I^\omega)$); its  incarnation  in $2^{I^\omega}$ is the family $\mathcal D_n(I^\omega)\setminus \mathcal D_{n+1}(I^\omega)$, $n\ge 1$~\cite{DMM}.
\end{examples}

\begin{example}
The subspace of $2^{\mathbb R^n}$, $n\ge 3$, of all compact ANR's in $\mathbb R^n$ is a $G_{\delta\sigma\delta}$-absorber~\cite{DR}.
\end{example}

Let $\Pi ^1_1$ and $\Sigma_1^1$ denote the classes of coanalytic and analytic sets, respectively.
Concerning $\Pi ^1_1$-absorbers (coanalytic absorbers), the Hu\-re\-wicz set $\mathcal H$ of all countable closed subset of $I$ can be treated as a standard $\Pi ^1_1$-absorber in the hyperspace $2^I$~\cite{C1}.  Other known examples of coanalytic absorbers we wish to recall here are:
\begin{examples}\label{ex3}\hfill
\begin{enumerate}
\item
The subspace of $C(I^n)$, $3\le n\le \omega$, consisting of all hereditarily decomposable  continua~\cite{Sam1},
\item
The subspaces $\mathcal{SCD}_k$ of $2^{I^\omega}$  of all strongly countable-dimen\-sional compacta of dimensions $\ge k$ and  $\mathcal{SCD}_{k+1}\cap C(I^\omega)$ of $C(I^\omega)$ of continua of dimensions $> k $, for any $k\in \mathbb N$~\cite{KS},
\item
The families of Wilder continua, of continuum-wise Wilder continua and of hereditarily arcwise connected nondegenerate continua in $I^n$, $n\ge 3$~\cite{KK}.
\end{enumerate}
\end{examples}

The main purpose of this paper is to add to the list  some new examples of absorbers in the hyperspaces of cubes, $n$-manifolds or the Hilbert cube manifolds.

 In Section~\ref{apo} we deal with three important in continuum theory classes $Col$ of colocally connected, $Apo$ of aposyndetic and $\mathcal K$ of Kelley continua.  Class $Apo$  properly contains $Col$  and it is properly contained in the class of, so called, continua with property C~\cite{Wilder} which we call Wilder continua~\cite{KK}. The class $LC$ of locally connected  continua ($=$ Peano continua) is properly contained in both $Apo$ and $\mathcal K$.  We evaluate the Borel class of  families $Apo(Y)$ and $Col(Y)$ of nondegenerate aposyndetic and colocally connected, resp., subcontinua of a compact space $Y$ as $F_{\sigma\delta}$ in hyperspace $C(Y)$ and show that the families are  $F_{\sigma\delta}$-universal if $Y$ contains a 2-cell. An analogous result is known for the family  $\mathcal K(Y)$ of Kelley subcontinua of a Peano continuum $Y$~\cite[Theorem 3.3]{Kr1}. We prove that if $Y$ is a Peano continuum each of whose open subset contains an $n$-cell ($n\ge 2$), then $Apo(Y)$,   $\mathcal K(Y)$ and $LC(Y)$ are strongly $F_{\sigma\delta}$-universal in $C(Y)$ and the families restricted to at least 2-dimensional continua are  $F_{\sigma\delta}$-absorbers in $C(Y)$.      Moreover, $Col(I^n)$ and $Apo(I^n)$ are shown to be $F_{\sigma\delta}$-absorbers in $C(I^n)$, $n\ge 3$. The same proof  shows that also the family of all  decomposable Kelley continua  in $I^n$ are $F_{\sigma\delta}$-absorbers in $C(I^n)$ for $n\ge 3$.

In Section~\ref{sep} we show that if $X$ is a locally connected continuum such that each open non-empty subset of $X$ contains a copy of $(0,1)^n$, $3\le n<\infty$, as an open subset and no subset of dimension $\le 1$ separates $X$,  then the families $\mathcal S(X)$ of all closed  separators of $X$ and  $\mathcal S(X)\cap C(X)$ of all continua that separate $X$ are $F_\sigma$-absorbers in $2^{X}$ and $C(X)$, respectively. Nowhere dense closed separators  and nowhere dense continua which separate $X$ form  $D_2(F_\sigma)$-absorbers in $2^X$ and $C(X)$, respectively.

One of the central notions in the theory of infinite dimension is that of a weakly infinite-dimensional space introduced by P. S. Alexandroff in 1948. In Section~\ref{WID} we find  two new coanalytic absorbers in $2^X$  ($C(X)$) for each  locally connected continuum $X$ such that each open non-empty subset of $X$ contains a Hilbert cube:  the family  of weakly infinite-dimensional compacta (continua) in $X$ of dimensions $\ge n\ge 1$ ($\ge n\ge 2$) and  the family  of compacta (continua) which are $C$-spaces of dimensions $\ge n\ge 1$ ($\ge n\ge 2$).

We observe that the collection   of hereditarily infinite-dimen\-sional compacta in $I^\omega$ is a $\Pi^1_1$-complete subset of  $2^{I^\omega}$ contained in a $\sigma Z$-set. We do not know however if it is a coanalytic absorber in $2^{I^\omega}$.

\section{Preliminaries}
 All spaces in the paper are assumed to be metric separable. The hyperspace $2^X$ of nonempty compact subsets of $X$ is endowed with the Hausdorff metric $dist$ and the hyperspace $C(X)$ of continua in $X$ is considered as a subspace of $2^X$.

If $X$ is a locally connected nondegenerate continuum, then $2^X$ is homeomorphic to
$I^\omega$ and if, additionally,   $X$ contains no free arcs, then also $C(X)$  is a Hilbert cube~\cite{CS}.

A closed subset $C$ \emph{separates} a space $X$ between two disjoint subsets $A$ and $B$ if there are open disjoint subsets $U$ and $V$ of $X$ such that $A\subset U$, $B\subset V$ and $X\setminus C= U\cup V$. Such set $C$ will be called a \emph{closed separator} in $X$.

A subset $C$ \emph{cuts} $X$ between two disjoint subsets $A$ and $B$ ($C$ is a \emph{cut} between $A$ and $B$) if it is disjoint from  $A\cup B$ and any continuum  $D\subset X$ that meets both $A$ and $B$ intersects $C$.

If a compact space $X$ is locally connected, then a closed subset $C\subset X$ separates $X$ between  $A$ and $B$ if and only is $C$ cuts $X$ between  $A$ and $B$~\cite[Theorem 1, p. 238]{Ku}.

Suppose $X$ is a closed subset of a space $Y$. It is known that for each closed separator $C$ in $X$ between disjoint closed subsets $A$ and $B$ of $X$, there is a closed separator $C'$ in $Y$ between $A$ and $B$ such that $C'\cap X = C$~\cite[Lemma 1.2.9, Remark 1.2.10]{E}.

Combining these two classic facts, we get an easy observation.
\begin{observation}\label{o1}
Let $X$ be a closed subset of a compact locally connected space $Y$. A closed set $C\subset X$ separates $X$ between closed subsets $A$ and $B$ of $X$ if and only if
$C'$ cuts $Y$ between $A$ and $B$.
\end{observation}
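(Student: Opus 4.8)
The statement is an immediate consequence of the two classical facts recalled just above, so the plan is simply to apply each of them, once for each implication; here $C'$ denotes a closed subset of $Y$ with $C'\cap X=C$, as in the paragraph preceding the observation.

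For the implication ``$\Rightarrow$'': assuming $C$ separates $X$ between $A$ and $B$, I would invoke the Engelking extension lemma (\cite[Lemma~1.2.9, Remark~1.2.10]{E}) to obtain a closed separator $C'$ in $Y$ between $A$ and $B$ with $C'\cap X=C$; then, because $Y$ is compact and locally connected, \cite[Theorem~1, p.~238]{Ku} upgrades ``$C'$ separates $Y$ between $A$ and $B$'' to ``$C'$ cuts $Y$ between $A$ and $B$''.

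For the implication ``$\Leftarrow$'': I would take the closed set $C'\subseteq Y$ with $C'\cap X=C$ that cuts $Y$ between $A$ and $B$ and apply the same Kuratowski equivalence in $Y$ (compact and locally connected) in the other direction to conclude that $C'$ separates $Y$ between $A$ and $B$; choosing disjoint open $U,V\subseteq Y$ with $A\subseteq U$, $B\subseteq V$, $Y\setminus C'=U\cup V$, I would then intersect with $X$, observing that $U\cap X$ and $V\cap X$ are disjoint, open in $X$, contain $A$ and $B$ respectively, and satisfy $X\setminus C=X\setminus(C'\cap X)=(U\cap X)\cup(V\cap X)$, so that $C$ separates $X$ between $A$ and $B$.

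I do not anticipate any genuine obstacle; the only points worth a word of care are that the Kuratowski result is used as a true biconditional (both ``separates $\Rightarrow$ cuts'' and ``cuts $\Rightarrow$ separates'' in the ambient space $Y$), that no local connectedness of $X$ is needed for the backward restriction step, and that the set $C'$ in the statement is exactly the extension supplied by Engelking's lemma, so that intersecting the witnessing open sets with $X$ returns precisely $C$.
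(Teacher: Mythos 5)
Your proposal is correct and matches the paper's intent exactly: the paper offers no written proof, stating only that the observation follows by ``combining these two classic facts,'' namely Engelking's separator-extension lemma and Kuratowski's separates--cuts equivalence in a compact locally connected space, which is precisely how you argue both implications (including the routine restriction of the witnessing open sets to $X$ in the backward direction).
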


Recall that, given a class $\mathcal M$ of spaces,  a subset  $A$ of a complete space $Z$ is {\it $\mathcal M$-complete}
if $A\in \mathcal M$ and $A$ is $\mathcal M$-\emph{hard}, i.e., for any  subset $C\in\mathcal M$ of a complete $0$-dimensional space $Y$ there is a continuous mapping $\xi: Y\to Z$ (called
a {\it reduction of $C$ to $A$}) such that $\xi^{-1}(A)=C$~\cite{Ke}.

A closed subset $B$ of a Hilbert cube $(\mathcal Q,d)$ is a $Z$-{\it set} in $\mathcal Q$ if
 for any $\epsilon>0$ there exists a continuous mapping $f:\mathcal Q\to \mathcal Q$
such that  $f(\mathcal Q)\cap B= \emptyset$
and  $\widetilde{d}(f,\operatorname{id}_\mathcal Q)=\sup\{d(f(x),x):x\in \mathcal Q\}<\epsilon$. A countable union of $Z$-sets in $\mathcal Q$ is called a $\sigma Z$-{\it set} in $\mathcal Q$.

Let $\mathcal M$ be a class of spaces which is topological (i.e., if $M\in \mathcal M $ then each homeomorphic image of $M$ belongs to $\mathcal M$) and closed hereditary (i.e., each closed subset of $M\in \mathcal M$ is in $\mathcal M$).
Following~\cite{DMM}, we call  a subset $A$  of a Hilbert cube $\mathcal Q$  \emph{$\mathcal M$-universal} if for each
$M\subset I^\omega$ from the class $\mathcal M$ there is an embedding $f:I^\omega \to\mathcal Q$ (a reduction of $M$ to $A$) such that $f^{-1}(A)=M$;
$A$ is said to be
\emph{strongly $\mathcal M$-universal}
if for each
$M\subset I^\omega$ from the class $\mathcal M$ and
each  compact set $K\subset I^\omega$, any embedding $f:I^\omega \to\mathcal Q$ such that
$f(K)$ is a $Z$-set in $\mathcal Q$
can be approximated arbitrarily closely
 by an embedding $g:I^\omega\to \mathcal Q$ such that
   $g(I^\omega)$ is a $Z$-set in $\mathcal Q$,
$g|K=f|K$ and $g^{-1}(A)\setminus K=M\setminus K$.

Observe that a strongly $\mathcal M$-universal set is $\mathcal M$-universal and an $\mathcal M$-universal set is $\mathcal M$-hard. Often in practice, in order to show that a set $A\subset \mathcal Q$ is $\mathcal M$-universal (or $\mathcal M$-hard), we choose an already known  $\mathcal M$-universal ($\mathcal M$-hard) set $B\subset I^\omega$ and construct a continuous embedding $\xi: I^\omega\to \mathcal Q$ such that $\xi^{-1}(A)=B$.

A subset $A$ of a Hilbert cube $\mathcal Q$ is called an  $\mathcal M $-\emph{absorber} in $\mathcal Q$ provided that:
\begin{enumerate}
\item $A\in \mathcal M$;
\item $A$ is contained in a $\sigma Z$-set in $\mathcal Q$;
\item $A$ is strongly $\mathcal M$-universal.
\end{enumerate}

\section{Proving strong $\mathcal M$-universality in hyperspaces}\label{su}

We are now going to sketch two techniques   for proving strong $\mathcal M$-universality. The first one, developed in~\cite{GM}, \cite[Lemma 3.2]{Sam1}, applies to  subsets $\mathcal A$ of $2^{I^n}$ or $C(I^n)$, $n\in \mathbb N\cup\{\omega\}$ (for simplicity we  will consider the case $n\in \mathbb N$). The second, presented in~\cite{C3}, concerns subsets $\mathcal A$ of the Hilbert cube $\mathcal Q=2^X$ or $\mathcal Q=C(X)$ in a much more general case  of a locally connected nondegenerate continuum $X$ (without free arcs) satisfying certain local properties. For some classes of continua located in such $X$  the second technique may fail while the first one still works if $X$ is a cube.

\subsection{Approach I}\label{ap1}
Suppose  that $K$ is a compact subset of $I^\omega$ and  $f:I^\omega \to C(I^n) (2^{I^n})$ is an embedding such that $f(K)$ is a $Z$-set and $\epsilon>0$. We have to find a $Z$-embedding $g:I^\omega \to C(I^n) (2^{I^n})$ which agrees with $f$ on $K$, is $\epsilon$-close to $f$ and satisfies
\begin{equation}\label{eq:uni}
g^{-1}(\mathcal A )\setminus K= M\setminus K.
\end{equation}

For a construction of $g$ we  need an auxiliary map
$\theta:I^\omega \to C([-1,1]^n)$
sending $q=(q_i)$ to
\begin{multline*}
\theta(q)= \\
 \left(([-1,0]\times \{0\})\cup S((-\frac12,0);\frac12)\cup \bigcup_{i=1}^\infty S(a_i;r_i(q))\right)\times \{(\underbrace{0,\dots, 0}_{n-2})\},
 \end{multline*}
where $S(x;r)$ denotes the circle in the plane centered at $x$ with radius $r$, $a_i=(-1+2^{-i},0)\in \mathbb R^2$ and $r_i(q)=4^{-(i+1)}(1+q_i)$ (see Figure~\ref{fig1}).

\begin{figure}[h]
\setlength{\unitlength}{1mm}
\begin{picture}(130,50)
\thicklines
\put(50,25){\circle{50}}
\put(50,25){\circle{10}}
\put(40,25){\circle{6}}
\put(33,25){\circle{4}}
\put(28,25){\circle{2}}
\put(87,25){\makebox(0,0){$(0,0,\dots,0)$}}

\put(50,17){\makebox(0,0){$c_1$}}
\put(40,19){\makebox(0,0){$c_2$}}
\put(33,20){\makebox(0,0){\small $c_3$}}
\put(28,21){\makebox(0,0){\scriptsize $c_4$}}

\drawline(25,25)(75,25)

\put(75,25){\circle*{1}}

\end{picture}
\caption{$\theta(q)$}\label{fig1}
\end{figure}
The set $\theta(q)$ is the union of  disjoint circles $c_i=S(a_i;r_i(q))$ contained in  $[-1,0]\times [-1,1]\times \{(\underbrace{0,\dots, 0}_{n-2})\}$ and of the diameter segment of the largest circle . The inner circles $c_i$ uniquely code the point $(q_i)$ and the map $\theta$ is a continuous embedding.

We also will  exploit a continuous deformation $H_0:2^{I^n}\times I\to 2^{I^n}$ through finite sets such that, for any $(A,t)\in 2^{I^n}\times (0,\frac12]$,  $H_0(A,t)$ is finite,
$$dist(A,H_0(A,t))\le 2t \quad  \text{and}\quad  H_0(A,t)\subset [t,1-t]^n.$$
Connecting points of $H_0(A,t)$, $t>0$,  one can define
another  deformation through finite graphs
\begin{equation}\label{eq:graphs}
H(A,t) = \begin{cases} \bigcup_{a,b\in H_0(A,t)} (\overline{ab}\cap (\overline{B}(a; 2t)\cup  \overline{B}(b; 2t)))& \text{if $t>0$}, \\ A& \text{if $t=0$},
\end{cases}
\end{equation}
where $\overline{B}(a; \alpha)$ is the closed $\alpha$-ball in $I^n$ around $a$ and $\overline{ab}$ is the line segment in $I^n$ from $a$ to $b$. For any  $(A,t)\in C(I^n)\times (0,\frac12)$,  $H(A,t)$ is a connected graph in $[t,1-t]^n$  and $dist(A,H(A,t))\le 4t$  (see~\cite{GM} and~\cite{Sam1} for details).

Assume now that, for each subset $M\subset I^\omega$ which belongs to class $\mathcal M$, there exists a continuous map $\xi: I^\omega \to C(I^n) (2^{I^n})$ such that $\xi^{-1}(\mathcal A)=M$ and $(0,0,\dots)\in \xi(x)$ for every $x\in I^\omega$.

An embedding  $g$ can be defined in the form
\begin{multline}\label{eq:g}
g(q)=  H(f(q),\mu(q))\  \cup \\
\bigcup_{x\in H_0(f(q),\mu(q))}(x+\mu(q)\theta(q))\  \cup \   \bigcup_{x\in H_0(f(q),\mu(q))}(x+\mu(q)\xi(q))
\end{multline}
(we use linear operations of addition and scalar multiplication in~\eqref{eq:g}), where
$$\mu(q) =\frac1{12}\min\{\epsilon, \min\{dist(f(q),f(z)): z\in K\}\}.$$
If one is interested  in the strong $\mathcal M$-universality of $\mathcal A$ in $2^{I^n}$ then the ``graph'' part $H(f(q),\mu(q))$ above is skipped.
In Section~\ref{apo}, in the proof of Theorem~\ref{t:Apo3} we modify $g$ by replacing $H(f(q),\mu(q))$ with the closed ball $\overline B\bigl(H(f(q),\mu(q)); \frac18\mu(q)\bigr)$.

One can check that $g$ is always a $Z$-embedding which agrees with $f$ on $K$ and $\epsilon$-approximates $f$  (see the proof of~\cite[Lemma 3.2]{Sam1}; some details of it are sketched  in the proof of Theorem~\ref{t:Apo3}). So, it only remains to verify property~\eqref{eq:uni}.

\

\subsection{Approach II}\label{ap2} Assume $X$ is a nondegenerate locally connected continuum without free arcs. As before, we are looking for a $Z$-embedding $g$ that approximates $f$, agrees with $f$ on $K$ and satisfies~\eqref{eq:uni}, where the cube $I^n$ is replaced with $X$.  Suppose that for each non-empty open subset $U$ of $X$ there is a continuous mapping $\varphi_U: I^\omega \to C(U)$ such that $\varphi_U^{-1}(\mathcal A)=M$.
If $X$ is a Peano continuum then each ${ \varphi }_U$ can be extended to a continuous mapping $\tilde{ \varphi }_U: I^\omega \times I \to C(U)$ such that $\tilde{\varphi}_U(q,1)={\varphi}_U(q)$ and $\tilde{\varphi}_U(q,t)$ is a finite union of arcs in $U$ for all $t\in (0,1]$ and $q\in I^\omega $.
If $U$ contains a homeomorphic image of $(0,1)^n$, $n\geq 2$, then we can put $\tilde{ \varphi }_U(q,t)=\zeta_U (H(\varphi_U(q),1-t))$ where $\zeta_U :(0,1)^n \to U$ is an embedding and $H$ is the homotopy defined in~\eqref{eq:graphs}. In this case $\tilde{ \varphi }_U(q,t)$ is a finite graph whenever $t\in (0,1]$.

Without repeating complicated details of the construction in~\cite[Section 5]{C3}, it is enough for our purposes to remark  that  the image $g(q)$, for $q\in I^\omega\setminus K$, is  of the form  $A(q)\cup \bigcup_{U\in S(q)}\overline{ \varphi }_U(q)$, where $A(q)$ is a compactum in $X$, $S(q)$ is a finite set, $\overline{ \varphi }_U(q)=\tilde{ \varphi }_U(q,t(q))$ and $\overline{ \varphi }_U(q)={ \varphi }_U(q)$ for at least one $U\in S(q)$. We can also assume that distinct $U, U'\in S(q)$ are disjoint.
In the case $\mathcal A\subset 2^X$,  $A(q)$ is at most 1-dimensional. If  $\mathcal A\subset C(X)$ then $g(q)$ is a continuum and $A(q)$ is a 1-dimensional locally connected continuum which is a countable  union of  arcs and each $\overline{\varphi}_U(q)$, $U\in S(q)$, intersects $A(q)$. Moreover, if each non-empty open subset of $X$ has dimension $\ge 2$, then we can also assume that  the intersection $\overline{\varphi}_U(q)\cap A(q)$ is a singleton for each $U\in S(q)$.
The construction guarantees that $g$ is a $Z$-embedding.

We will use this approach and verify~\eqref{eq:uni} in Sections~\ref{sep} and~\ref{WID}, and partly in~\ref{apo}.

\section{Colocally connected, aposyndetic and Kelley continua}\label{apo}

 Recall that a continuum $X$ is \emph{aposyndetic} if and only if $X$ is semi-locally connected, i.e. for any $\epsilon>0$, each point  $x\in X$ has an open neighborhood $U$  of diameter  $\diam U<\epsilon$  such that $X\setminus U$ has finitely many components; if one requires that  $X\setminus U$ be connected then $X$ is \emph{colocally connected}.

 \begin{proposition} \label{p:Apo1}
 If $Y$ is a compact space, then the families $Apo(Y)$ and $Col(Y)$ of nondegenerate aposyndetic continua and colocally connected continua in $Y$, resp., are $F_{\sigma\delta}$-subsets of $C(Y)$.
 If $Y$ contains a copy of $I^2$, then the families are $F_{\sigma\delta}$-universal.
 \end{proposition}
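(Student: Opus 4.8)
The plan is to handle the two assertions separately: first the upper bound (membership in the Borel class $F_{\sigma\delta}$), then $F_{\sigma\delta}$-universality via a reduction.

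For the upper bound, I would translate the definitions of aposyndesis and colocal connectedness into a countable Boolean combination of closed conditions on $C(Y)$. Fix a countable basis $\{U_k\}$ of $Y$. A continuum $A\in C(Y)$ is colocally connected iff for every $\epsilon>0$ (equivalently, every $1/m$) and every point $x\in A$ there is a basic open $U_k\ni x$ with $\diam U_k<1/m$ and $A\setminus U_k$ connected. To get an $F_{\sigma\delta}$ description I would rewrite this with the quantifier over $x$ replaced by a finite subcover argument: for each $m$, the set of $A$ for which such neighborhoods exist and ``fit together'' along a finite cover of $A$ is an $F_\sigma$ set (the finiteness of the cover and the fact that ``$A\setminus U_k$ connected'' and ``$\diam U_k\le 1/m$'' are each closed-type conditions on $A$ make each such instance closed, and we take a countable union over the finitely many combinatorial choices), and then intersect over $m$. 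The aposyndetic case is identical except that ``$A\setminus U_k$ connected'' is replaced by ``$A\setminus U_k$ has at most $N$ components'' for some $N$ (introducing one more countable union over $N$), which is again a closed-type condition since $\{A: A\setminus \overline{U_k}\text{ has }\le N\text{ components}\}$ is closed in $C(Y)$; the Hausdorff-continuity of $A\mapsto A\setminus U_k$ at the relevant stage and standard hyperspace facts give the needed semicontinuity. Care is needed with open versus closed neighborhoods; I would use the standard trick of shrinking $U_k$ slightly so that the conditions on the closure give the open version, at the cost only of more countable unions. The main bookkeeping obstacle here is arranging the quantifier ``$\forall x\in A$'' to come out as an $F_\sigma$ (not $F_{\sigma\delta}$) condition at each level $m$, so that the whole description is $F_{\sigma\delta}$ rather than one level worse; the finite-subcover compactness argument is what makes this work.

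For universality, since $Y$ contains a 2-cell $D$, it suffices to produce a continuous reduction of a known $F_{\sigma\delta}$-universal (in fact, $F_{\sigma\delta}$-hard) set $M\subset I^\omega$ to $Apo(Y)$ (resp. $Col(Y)$), i.e. a continuous $\xi:I^\omega\to C(Y)$ with $\xi(q)\subset D$ for all $q$ and $\xi^{-1}(Apo(Y))=M$ (and the analogue for $Col$). I would take as the model $M$ the standard $F_{\sigma\delta}$-hard set $\widehat{c_0}=\{(x_i)\in I^\omega:\lim_i x_i=0\}$ described in Examples~\ref{ex2}. The construction: fix in $D$ a sequence of disjoint round disks $D_i$ shrinking to a point $p$, and inside each $D_i$ build a ``comb'' or ``harmonic-fan''-type sub-continuum $T_i(q_i)$ whose local connectedness (indeed aposyndesis/colocal connectedness) at its bad point degrades exactly when $q_i$ is bounded away from $0$: concretely, let $T_i(t)$ be an arc together with a sequence of spikes of lengths tending to $0$, but where an extra ``defect'' (a non-aposyndetic limit point, e.g. a topologist's-sine-curve-type accumulation arc) is present iff $t>0$, scaled so that its size is comparable to $t\cdot\mathrm{diam}(D_i)$. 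Then set $\xi(q)=\{p\}\cup\bigcup_i T_i(q_i)$, joined to a fixed arc in $D$ so that the result is a continuum. Continuity of $\xi$ comes from $\mathrm{diam}(D_i)\to 0$. The key point is: $\xi(q)$ is aposyndetic (colocally connected) iff the defects $T_i(q_i)$ do not persist ``at scale $1$'' near $p$, which happens iff $q_i\to 0$, i.e. iff $q\in\widehat{c_0}$; away from $p$ each $T_i(q_i)$ lives in a nice locally connected picture inside the 2-cell, so no other bad points are introduced. This yields $\xi^{-1}(Apo(Y))=\xi^{-1}(Col(Y))=\widehat{c_0}=M$, giving universality for both classes simultaneously.

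The step I expect to be the main obstacle is the verification that $\xi(q)$ has \emph{no unexpected} non-aposyndetic points — that the only place aposyndesis can fail is at the cluster point $p$, and that it fails there precisely under the $\lim q_i\ne 0$ condition. This requires choosing the model continua $T_i(t)$ carefully: each $T_i(t)$ must itself be aposyndetic away from its single designed defect point, the defect must be genuinely non-aposyndetic (not merely non-locally-connected) when $t>0$ so that the reduction lands in $Apo(Y)$ and not just in some larger class, and the union must not create aposyndesis failures at points where two pieces meet. I would address this by making the $T_i(t)$ built from arcs and convergent arc-sequences (so that local structure is explicit) and by keeping the gluing at points of local arc-connectedness. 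A secondary technical point is that to conclude the proposition as stated we only need universality, but one should check the reduction is actually an embedding if one later wants strong universality — here plain universality suffices, so I would not pursue that refinement. Everything else (continuity, the $F_{\sigma\delta}$ bound matching) is routine given the structure above.
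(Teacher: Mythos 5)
Your membership argument rests on the claim that conditions such as ``$A\setminus U_k$ is connected'' or ``$A\setminus\overline{U_k}$ has at most $N$ components'' are closed conditions on $A\in C(Y)$. They are not. For instance, in $Y=[0,1]$ with $U=(1/3,2/3)$ the continua $A_j=[0,2/3-1/j]$ have $A_j\setminus U=[0,1/3]$ connected, while the limit $A=[0,2/3]$ has $A\setminus U=[0,1/3]\cup\{2/3\}$ disconnected; a planar variant (a diameter segment plus circles of radii $1/4+1/j$ converging onto the boundary of a closed disk $\overline{U}$) defeats the closed-neighborhood version as well. The correct complexity of these conditions is $G_\delta$ (disconnectedness of $A\setminus U$ is $F_\sigma$), and the covering clause ``$A\subset\bigcup_i U_{k_i}$'' is open, so each of your fixed-$m$ levels is only $G_{\delta\sigma}$, and intersecting over $m$ lands you in $G_{\delta\sigma\delta}$ --- one level above $F_{\sigma\delta}$. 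This is precisely where a real idea is needed: the paper passes to the complement of $Col(Y)$, uses lower semicontinuity of $(X,K)\mapsto\overline{X\setminus K}$ to conclude that $(X,K)\mapsto\diam(X\setminus K)$ is of the first Baire class (so $\{\diam(X\setminus K)\ge 1/n\}$ is $G_\delta$), and then quantifies universally over the compact hyperspace $C(Y)^m$, a co-projection along a compact factor which preserves $G_\delta$; this exhibits the complement as $G_{\delta\sigma}$. Your outline has no substitute for this step, and without one the $F_{\sigma\delta}$ bound is not established.

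The universality construction is also flawed by design. You require each $T_i(t)$, $t>0$, to contain a genuinely non-aposyndetic point (a sine-curve-type defect) scaled to size about $t\cdot\diam(D_i)$. But aposyndesis is not a scale-dependent property: however small the defect, $\xi(q)$ fails semi-local connectedness at that point, since the defect lies inside $D_i$, the other pieces meet $T_i(q_i)$ only at prescribed gluing points, and no such attachment can consolidate the infinitely many complementary components created near the limit arc of the sine curve. Hence $\xi(q)\notin Apo(Y)$ whenever some $q_i>0$, so $\xi^{-1}(Apo(Y))\subset\{q: q_i=0\ \text{for all}\ i\}$ rather than $\widehat{c_0}$; e.g.\ $q=(1/2,1/3,1/4,\dots)\in\widehat{c_0}$ is sent to a non-aposyndetic continuum, so the reduction fails. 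The paper's map $\psi$ avoids this by using only locally connected blocks --- rectangles $R_{ij}(q_i)$ of heights $q_i/(j+1)$ attached to $\partial(I^2)$ along bases $J_{ij}$ accumulating at $\{1\}$ --- so that non-aposyndesis can arise only from accumulation of blocks of height bounded away from $0$ on the right edge, i.e.\ exactly when $q_i\not\to 0$. Any repair of your construction must likewise encode $\lim_i q_i=0$ in the limiting behaviour of locally nice pieces, not in the presence or absence of a defect in the $i$-th piece. A minor additional point: with the paper's definition, $F_{\sigma\delta}$-universality requires the reduction $I^\omega\to C(Y)$ to be an embedding (only hardness does not), so that check cannot simply be waived, though it is routine for a construction of the paper's type.
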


 \begin{proof}
 We first evaluate the Borel class of $Col(Y)$. By compactness, $X\in Col(Y)$ if and only if for each $\epsilon >0$ there is a finite $\epsilon$-cover of $X$ consisting  of open subsets of $X$
with connected complements. Passing to complements, this  can be written in terms of closed subsets of $Y$ as follows:
 \begin{multline}\label{eq:Col1}
X\in Col(Y)\quad\text{if and only if}\\
|X|>1\quad\wedge\quad (\forall n)\ (\exists m)\ (\exists K_1, \dots, K_m\in C(Y)) \\
\bigcap_{i=1}^m K_i =\emptyset\quad \wedge\quad \ (\forall i\le m) \left( K_i\subset X\quad \wedge\quad \diam (X\setminus K_i)< \frac1n\right).
  \end{multline}
Since  formula~\eqref{eq:Col1} yields merely analyticity of $Col(Y)$, it needs  further refinements.
It seems more convenient to deal with the complement of $Col(Y)$ in  $C(Y)$. Recall that the function
$$f:C(Y)\times C(Y)\to 2^Y, \quad f(X,K)=\overline{X\setminus K}$$
is lower semi-continuous~\cite[p. 182]{Ku1}. Hence, the  function
$$(X,K) \mapsto \diam \bigl(f(X,K)\bigr)$$
 is of the first Borel class~\cite[Theorem 1, p. 70]{Ku} which yields that the set
\begin{multline*}
\{\, (X,K)\in C(Y)\times C(Y): \diam (X\setminus K)\ge \frac1n\,\} = \\
\{\, (X,K)\in C(Y)\times C(Y): \diam (\overline{X\setminus K})\ge \frac1n\,\}
\end{multline*}
  is $G_\delta$ in  $C(Y)\times C(Y) $ for each $n$.  Since
$X\in C(Y)\setminus Col(Y)$  if and only if
\begin{multline}\label{eq:Col3}
|X|=1\quad \vee\quad (\exists n)\ (\forall m)\ (\forall   K_1, \dots, K_m\in C(Y))\\
\bigcap_{i=1}^m K_i \neq \emptyset\quad \vee\quad  (\exists i\le m) \left( K_i\not\subset X \quad\vee\quad \diam (X\setminus K_i)\ge \frac1n\right),
\end{multline}
we get that  $C(Y)\setminus Col(Y)$ is $G_{\delta\sigma}$ in $C(Y)$.

The proof for the family $Apo(Y)$ is similar.

Passing to the second part, we can assume that $Y$ contains $I^2$. Let $\bigl(J_{ij}\bigr)$, $i,j=1,2,\dots$, be a double sequence of mutually disjoint nondegenarate closed intervals in $I$ such that the length of $J_{ij}$ is less than $\frac1{4^{i+j}}$ and, for each $j$,  $\Lim_i J_{ij}\to \{1\}$.
Consider rectangles
$$R_{ij}(t)= J_{ij}\times  \left[0,\frac{t}{j+1}\right],$$
and  define a continuous embedding $\psi: I^\omega \to C(Y)$ by
\begin{equation}\label{eq:Col5}
\psi((q_i))=\partial(I^2)\, \cup\, \bigcup_{i,j}\,R_{ij}(q_i)
\end{equation}
(Figure~\ref{fig4}; instead of rectangles one can also use their boundaries).
\begin{figure}[h]
\setlength{\unitlength}{1mm}
\begin{picture}(100,60)
\thicklines

\put(25,0){\framebox(60,60)}
\put(49,32){\makebox(0,0){\small$R_{11}(1)$}}
\put(46,0){\framebox(7,30)}

\put(55,0){\framebox(3,15)}
\put(57,17){\makebox(0,0){\scriptsize$R_{12}(1)$}}

\put(60,0){\framebox(2,7.5)}
\put(60.5,9.5){\makebox(0,0){\tiny$R_{13}$}}

\put(73,17){\makebox(0,0){\scriptsize$R_{22}(1)$}}
\put(70.5,0){\framebox(2.5,15)}

\put(75.5,9.5){\makebox(0,0){\tiny$R_{23}$}}
\put(74.5,0){\framebox(1.5,7.5)}

\put(66.5,32){\makebox(0,0){\small$R_{21}(1)$}}
\put(64,0){\framebox(5,30)}

\put(79,32){\makebox(0,0){\small$R_{31}(1)$}}
\put(78,0){\framebox(3,30)}

\put(83.2,17){\makebox(0,0){\scriptsize$R_{32}$}}
\put(82,0){\framebox(1.5,15)}

\thinlines
\put(62.7,0){\line(0,1){3.7}}
\put(63.2,0){\line(0,1){3.7}}
\put(62.7,3.7){\line(1,0){.5}}

\put(76.7,0){\line(0,1){3.7}}
\put(77.1,0){\line(0,1){3.7}}
\put(76.7,3.7){\line(1,0){.4}}

\put(84.2,0){\line(0,1){7.5}}
\put(84.5,0){\line(0,1){7.5}}
\put(84.2,7.5){\line(1,0){.3}}

\end{picture}
\caption{$\psi\bigl(1,1,1,\dots)$}\label{fig4}
\end{figure}

It satisfies
\begin{equation}\label{eq:Col4}
\psi^{-1}(Col(Y))= \psi^{-1}(Apo(Y))=\widehat{c_0}
\end{equation}
and since  $\widehat{c_0}$ is an $F_{\sigma\delta}$-absorber in $I^\omega$, the proof is complete.

\end{proof}

\begin{proposition}\label{p:Apo2}
If  $ D(Y)$ denotes the family of all decomposable continua in a space $Y$, then $Col(Y)\subset Apo(Y)\subset D(Y)$ and $D(I^n)$ is a $\sigma Z$-set in $C(I^n)$ for $n\ge 3$.
\end{proposition}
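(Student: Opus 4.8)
The two assertions are of quite different character, so I would treat them separately. The inclusions $Col(Y)\subset Apo(Y)\subset D(Y)$ are immediate from the definitions: a colocally connected continuum is aposyndetic (a connected complement is in particular a complement with finitely many components), and an aposyndetic nondegenerate continuum cannot be indecomposable, because in an indecomposable continuum every proper subcontinuum is nowhere dense, so no small neighborhood $U$ of a point can have $X\setminus U$ with finitely many components (each composant is dense, forcing the complement of a small open set to be connected only in the degenerate sense, and in fact $X\setminus U$ fails to be the union of finitely many continua). I would state this last point carefully: if $X$ is indecomposable and $U$ is a proper nonempty open set with $\overline U\ne X$, then $X\setminus U$ contains points from at least two distinct composants, hence cannot be written as a finite union of proper subcontinua; this contradicts semi-local connectedness once $\diam U$ is small. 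So the real content is the second assertion.

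**The $\sigma Z$-set claim.** To show $D(I^n)$ is a $\sigma Z$-set in $C(I^n)$ for $n\ge 3$, the plan is to exhibit $D(I^n)$ as a countable union of closed sets, each of which is a $Z$-set. First, write $D(I^n)=\bigcup_{k} D_k$ where $D_k$ is the set of continua $X\in C(I^n)$ admitting a decomposition $X=A\cup B$ into proper subcontinua with $\mathrm{dist}(A,X)\le$ something forcing both $A$ and $B$ to have diameter at least $1/k$ relative to $X$ — more precisely, $D_k=\{X\in C(I^n): \exists\,A,B\in C(I^n),\ A\cup B=X,\ A\not\subset B,\ B\not\subset A$, and each of $X\setminus A$, $X\setminus B$ contains a point at distance $\ge 1/k$ from the other piece$\}$. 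One checks via the standard compactness-of-$C(I^n)\times C(I^n)$ argument (using that $(A,B)\mapsto A\cup B$ is continuous and the ``proper part'' conditions are closed at this quantitative level) that each $D_k$ is closed in $C(I^n)$. Then $\overline{D(I^n)}=D(I^n)$ need not hold, but each $D_k$ is genuinely closed, and their union is all of $D(I^n)$ since any decomposable continuum has a decomposition into two proper subcontinua, each of which omits a point of the other, and that omission is witnessed at some positive distance $1/k$.

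**Each $D_k$ is a $Z$-set.** This is where the dimension hypothesis $n\ge 3$ and the techniques of Section~\ref{su} enter, and it is the main obstacle. The strategy is the usual one for hyperspaces of cubes: given $\epsilon>0$ I want a map $C(I^n)\to C(I^n)$ that is $\epsilon$-close to the identity and misses $D_k$, i.e. lands in indecomposable continua (or at least in continua with no $1/k$-spread decomposition). I would use a variant of the deformation $H$ from~\eqref{eq:graphs}: push an arbitrary continuum $X$ to a nearby finite graph $H(X,t)$ for small $t$, then attach to it — inside the available $n$-dimensional room, which requires $n\ge 2$ for the graph-surgery and effectively $n\ge 3$ to do it as a $Z$-embedding without collisions — a small ``indecomposability gadget'': a tiny copy of an indecomposable continuum (e.g. a solenoid or a Knaster-type continuum, or simply a small Bing-type construction) glued on so that the resulting continuum is itself indecomposable, or at any rate is not in $D_k$. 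Since $H(X,t)$ is a finite connected graph, joining it to a suitably attached indecomposable continuum of diameter $<\epsilon$ yields a continuum $\epsilon$-close to $X$; and one arranges the attachment (a single arc from the graph into the gadget, with the gadget's composant structure swallowing the graph) so that no decomposition into two proper subcontinua each of diameter comparable to $1/k$ survives — the point being that for small enough $t$ the whole graph part has diameter $<1/k$, so any $1/k$-spread decomposition would have to split the gadget nontrivially, which an indecomposable continuum forbids. Verifying that this yields a bona fide $Z$-map (negligibility of the image, via general position in $I^n$, $n\ge 3$) is routine from the cited machinery; the delicate point I would spell out is the combinatorial check that the constructed continuum avoids $D_k$, which I expect to be the crux and to occupy most of the written proof.
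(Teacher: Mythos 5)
The paper disposes of this proposition by citation: $Col(Y)\subset Apo(Y)$ is definitional, the implication ``nondegenerate aposyndetic $\Rightarrow$ decomposable'' is a classical fact, and the statement that $D(I^n)$ is a $\sigma Z$-set in $C(I^n)$, $n\ge 3$, is quoted from Samulewicz~\cite{Sam1} (cf.~\cite{Sam2}, where decomposable continua are shown to form an $F_\sigma$-absorber). You instead try to prove the $\sigma Z$-set claim from scratch, and that is where your argument has a genuine gap. The decomposition $D(I^n)=\bigcup_k D_k$ into closed ``quantitatively spread'' pieces is fine, but the proposed proof that each $D_k$ is a $Z$-set fails. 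Your map sends $X$ to a finite graph $H(X,t)$ with a small indecomposable gadget attached; for this map to be $\epsilon$-close to the identity, $H(X,t)$ must stay Hausdorff-close to $X$, so its diameter is essentially $\diam X$. Hence your pivotal claim that ``for small enough $t$ the whole graph part has diameter $<1/k$'' is false (shrinking the graph below $1/k$ would destroy $\epsilon$-closeness for every $X$ of diameter $>1/k$). Once the graph part is large, the image lies in $D_k$ after all: a connected finite graph of diameter $\ge 2/k$ splits into two proper subcontinua each containing a point at distance $\ge 1/k$ from the other, and placing the tiny gadget and its connecting arc into one of the pieces keeps this a legitimate decomposition of the whole image. (Attaching an indecomposable continuum $K$ to a graph $G$ by an arc $A$ never creates indecomposability anyway: $G\cup A$ and $A\cup K$ already decompose $G\cup A\cup K$.) More structurally, no construction that deforms continua onto locally connected sets --- graphs or neighborhoods of graphs, as in Section~\ref{su} --- can miss $D_k$, since such images are decomposable at every scale; to avoid $D_k$ one needs $\epsilon$-approximations that are crooked down to scale $1/k$ (Bing-type hereditarily indecomposable approximations), carried out \emph{continuously} in $X$. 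That is precisely the nontrivial content of the cited result, and it is absent from your sketch.

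A minor point on the easy half: your composant argument is off as stated --- that $X\setminus U$ ``contains points from at least two distinct composants, hence cannot be written as a finite union of proper subcontinua'' does not follow (two subcontinua can lie in two composants). The clean argument: if $X$ is indecomposable and $\overline U\neq X$, then $X\setminus U$ has nonempty interior, while every proper subcontinuum of $X$ is nowhere dense, so $X\setminus U$ cannot be a union of finitely many components (each a proper subcontinuum); hence $X$ is not semi-locally connected, i.e., not aposyndetic. Alternatively, simply cite the classical fact, as the paper does.
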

\begin{proof}
It is known that each nondegenerate aposyndetic continuum is decomposable. The last part of the proposition was proved in~\cite{Sam1}.

\end{proof}

\begin{theorem}\label{t:Apo3}
$Apo(I^n)$ and $Col(I^n)$ are $F_{\sigma\delta}$-absorbers in $C(I^n)$ for $n\ge 3$.
\end{theorem}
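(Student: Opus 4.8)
The plan is to verify the three conditions defining an $F_{\sigma\delta}$-absorber. Conditions~(1) and~(2) are immediate from the preceding results: Proposition~\ref{p:Apo1} gives that $Apo(I^n)$ and $Col(I^n)$ are $F_{\sigma\delta}$-subsets of $C(I^n)$, and Proposition~\ref{p:Apo2} places both of them inside $D(I^n)$, which for $n\ge 3$ is a $\sigma Z$-set in $C(I^n)$. So the whole task is to prove strong $F_{\sigma\delta}$-universality of $\mathcal A$, where $\mathcal A$ stands for either $Apo(I^n)$ or $Col(I^n)$, and I would do this by Approach~I of Section~\ref{su}, running the two cases together.

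Approach~I reduces everything to: (i) producing, for a given $M\subset I^\omega$ of class $F_{\sigma\delta}$, a continuous map $\xi\colon I^\omega\to C(I^n)$ with $\xi^{-1}(\mathcal A)=M$ and $(0,\dots,0)\in\xi(q)$ for all $q$; and (ii) verifying property~\eqref{eq:uni} for the map $g$ of~\eqref{eq:g} built from $\xi$, with the modification indicated in Approach~I, that is, with $H(f(q),\mu(q))$ replaced by the ball $G(q):=\overline B\bigl(H(f(q),\mu(q));\frac18\mu(q)\bigr)$. For~(i) I would take an embedding $\rho\colon I^\omega\to I^\omega$ with $\rho^{-1}(\widehat{c_0})=M$ (it exists since $\widehat{c_0}$ is $F_{\sigma\delta}$-universal) and let $\xi$ be an isometric copy of $\psi\circ\rho$, with $\psi$ the embedding~\eqref{eq:Col5}, placed inside $I^n$ so that one corner of its $2$-cell falls on $(0,\dots,0)$ and --- using the extra coordinates available when $n\ge 3$ --- the image of $\xi$ meets the planar set $\theta(q)$ only at $(0,\dots,0)$. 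Then $(0,\dots,0)\in\partial(I^2)\subset\xi(q)$, and by~\eqref{eq:Col4} together with the topological invariance of colocal connectedness and aposyndesis, $\xi^{-1}(Col(I^n))=\xi^{-1}(Apo(I^n))=\rho^{-1}(\widehat{c_0})=M$. The point to record is that whenever $\rho(q)\notin\widehat{c_0}$, the continuum $\xi(q)$ fails both colocal connectedness and aposyndesis at a point $p$ lying near the corner of $I^2$ opposite to $(0,0)$; in particular $p\neq(0,\dots,0)$, at a fixed positive distance from it, and removing a small neighbourhood of $p$ from $\xi(q)$ leaves a component --- coming from the ``surplus'' thin rectangles in~\eqref{eq:Col5} --- that is bounded away from $(0,\dots,0)$.

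With such $\xi$, the map $g$ is, as in the proof of~\cite[Lemma~3.2]{Sam1} (now with $G(q)$ in place of the graph), a $Z$-embedding agreeing with $f$ on $K$ and $\epsilon$-close to $f$, so all that remains is~\eqref{eq:uni}: $g(q)\in\mathcal A$ if and only if $q\in M$, for $q\notin K$. For such $q$ we have $\mu(q)>0$ and $g(q)=G(q)\cup\bigcup_x\bigl(x+\mu(q)\theta(q)\bigr)\cup\bigcup_x\bigl(x+\mu(q)\xi(q)\bigr)$, where, after the usual adjustments of scale as in~\cite{Sam1}, the copies $x+\mu(q)\theta(q)$ and $x+\mu(q)\xi(q)$ are attached to $G(q)$ along arcs issuing from interior points $x$ of $G(q)$, their ``far'' parts lie outside $G(q)$, and distinct copies are pairwise disjoint off $G(q)$. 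I would then prove the structural statement that $g(q)\in\mathcal A$ if and only if $\xi(q)\in\mathcal A$, which with $\xi^{-1}(\mathcal A)=M$ gives~\eqref{eq:uni}. For ``$\Leftarrow$'' one checks the complement-of-small-neighbourhood condition at each point of $g(q)$: at a point of $\inte G(q)$ the complement is connected, because $G(q)$ is a connected $n$-dimensional thickening of a finite graph, hence --- using $n\ge 3$ --- colocally connected and without local cut points, and every attached copy still meets $G(q)$ outside the neighbourhood; at a point of a $\theta$- or $\xi$-copy the complement is the union of a connected set containing $G(q)$ with the part of that copy lying outside the neighbourhood, and this part is connected (resp.\ has finitely many components) because $\theta(q)$ is a colocally connected aposyndetic Peano continuum (resp.\ because $\xi(q)\in\mathcal A$), and it meets $G(q)$. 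For ``$\Rightarrow$'': if $\xi(q)\notin\mathcal A$, then the failure at $p$ above, being at positive distance from $(0,\dots,0)$ and producing a surplus component bounded away from it, persists after rescaling as a surplus component of $g(q)$ minus a small neighbourhood of $x_0+\mu(q)p$ that stays separated from $G(q)$ and from the other pieces, so $g(q)\notin\mathcal A$.

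The main obstacle is this structural statement, and in particular making sure that everything in $g(q)$ other than the $\xi$-copies is genuinely harmless: that $\theta(q)$ is colocally connected and aposyndetic (careful bookkeeping at the circle--segment junctions and at the accumulation point of the small circles), and that $G(q)$ is colocally connected with no local cut points --- which is exactly why the graph $H(f(q),\mu(q))$ is thickened to $\overline B(H;\frac18\mu(q))$, and where the hypothesis $n\ge 3$ is essential (it also supplies the room to keep the pieces pairwise disjoint and attached at interior points of $G(q)$). One must additionally re-run the $Z$-embedding argument of~\cite[Lemma~3.2]{Sam1} with $G(q)$ replacing the graph, but since $G(q)$ is just as small and just as compressible this is routine. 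As the same $g$ serves both $\mathcal A=Col(I^n)$ and $\mathcal A=Apo(I^n)$, the theorem follows for both.
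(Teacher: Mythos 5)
Your overall route is the paper's route: conditions (1)--(2) from Propositions~\ref{p:Apo1} and~\ref{p:Apo2}, and strong $F_{\sigma\delta}$-universality via Approach~I with $\xi$ a copy of $\psi$ composed with a reduction of $M$ to $\widehat{c_0}$, the graph $H(f(q),\mu(q))$ thickened to the ball $\overline B\bigl(H(f(q),\mu(q));\frac18\mu(q)\bigr)$, and the $Z$-embedding checks rerun as in~\cite{Sam1}. The place where your argument has a genuine gap is the claim that, after ``adjustments of scale,'' the attached copies $x+\mu(q)\xi(q)$, $x\in H_0(f(q),\mu(q))$, are pairwise disjoint off the thickened graph. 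That is not true of the construction and cannot be arranged by rescaling: the copies are scaled by $\mu(q)$, which is of the same order as the mesh of the finite set $H_0(f(q),\mu(q))$, so copies attached at nearby points of $H_0$ typically overlap (and one cannot shrink them by the minimal spacing of $H_0(f(q),\mu(q))$ without losing continuity in $q$, since points of these finite sets may merge as $q$ varies). Your ``$\Rightarrow$'' direction leans on this disjointness when you argue that the surplus components produced near the bad point of a $\xi$-copy ``stay separated from $G(q)$ and from the other pieces''; with overlapping copies this is exactly what has to be proved, since another translate of the same continuum passing through the accumulation region could reconnect infinitely many of the chopped-off rectangle tops and restore semi-local connectedness.

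The paper closes this gap not by disjointness but by a lexicographic extremal-point argument: one looks at the copy attached at the maximal point $y\in H_0(f(q),\mu(q))$ in the lexicographic order, observes that only finitely many other copies $x+\mu(q)\xi(q)$ (with $x\prec y$ and the same last $n-2$ coordinates, since the copies lie in $2$-planes) can meet it, and checks that neither these finitely many copies nor the ball and $\theta$-parts repair the failure of semi-local connectedness caused by $y+\mu(q)\xi(q)$. Some version of this (or a construction genuinely guaranteeing disjointness) is needed to make your structural claim ``$g(q)\in\mathcal A$ iff $\xi(q)\in\mathcal A$'' correct in the forward direction; the rest of your sketch (the choice of $\xi$, the reason for thickening $H$ to a ball when $n\ge 3$, and the ``$\Leftarrow$'' direction, which the paper also treats briefly) matches the paper. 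A minor slip: for $q\notin\widehat{c_0}$ the failure points of $\psi(q)$ accumulate along the right edge of $I^2$ near $(1,0)$, where the rectangles $R_{ij}$ of~\eqref{eq:Col5} pile up, not near the corner opposite $(0,0)$; this is harmless since all you use is that they lie at a fixed positive distance from $(0,0)$.
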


\begin{proof}
Concerning $Apo(I^n)$, we can refer to the proof of the  strong  $F_{\sigma\delta}$-universa\-lity of the family of all Peano continua  $LC(I^n)$ due to Gladdines and van Mill~\cite{GM}. It perfectly works for $Apo(I^n)$. Below, we sketch an alternative  construction that suits both families. Actually, we appropriately modify the construction of the embedding $g$ from Subsection~\ref{ap1}.

First, locate continua $\psi((q_i))$ from~\eqref{eq:Col5} in  $I^n$ as
$$
\psi'((q_i))=\psi((q_i))\times \{(\underbrace{0,\dots, 0}_{n-2})\}.
$$
The set $\widehat{c_0}$ being an $F_{\sigma\delta}$-absorber in $I^\omega$, there is, for an $F_{\sigma\delta}$-set $M\subset I^\omega$, a mapping $\zeta:I^\omega\to I^\omega$ such that $\zeta^{-1}(\widehat{c_0})=M$. Put
\begin{equation}\label{xi}
\xi=\psi'\zeta.
\end{equation}
Second, since the ``graph" ingredient $H(f(q),\mu(q))$ may spoil the colocal connectedness of $g(q)$, we surround it by a closed ball $$\overline B\bigl(H(f(q),\mu(q)); \frac18\mu(q)\bigr)$$ of a small enough radius (e. g., $\frac18\mu(q)$) and redefine the embedding as follows:

\begin{multline}\label{eq:g'}
g'(q)=  \overline B\bigl(H(f(q),\mu(q)); \frac18\mu(q)\bigr)\  \cup \\
\bigcup_{x\in H_0(f(q),\mu(q))}(x+\mu(q)\theta(q))\  \cup \   \bigcup_{x\in H_0(f(q),\mu(q))}(x+\mu(q)\xi(q)).
\end{multline}

Clearly, $g'$ is an $\epsilon$-approximation of $f$ and $g'|K=f|K$. So, it is 1-to-1 on $K$.  For distinct $q,q'\in I^\omega\setminus K$, coefficients  $\mu(q)$ and $\mu(q')$ are positive. Let $x$, $x'$ be
 the minimal points in $H_0(f(q),\mu(q))$ and $H_0(f(q'),\mu(q'))$, respectively, in the lexicographic order on $I^n$.  Then $g'(q)\neq g'(q')$ because the inner circles in copies  $x+\mu(q)\theta(q)$ and $x'+\mu(q')\theta(q')$ remain disjoint from the rest of $g'(q)$ and $g'(q')$, respectively. If $q\in K$, $q'\notin K$, then $g'(q)\neq g'(q')$ by a similar simple estimation of $dist(g'(q), g'(q'))$  as for the original approximation $g$.
  Moreover, $g'(K)=f(K)$ is a $Z$-set in $C(I^n)$ by assumption and $g'(I^\omega)\setminus g'(K)$, as an open subset of $g'(I^\omega)$, is  $F_\sigma$ in $C(I^n)$. Notice that  $g'(q)$, for each $q\in I^\omega\setminus K$, contains  an open in $g'(q)$, one-dimensional subset of    $x+\mu(q)\theta(q)$ (e. g., the inner circles in there). Therefore, the deformation
  $$h:C(I^n)\times I\to C(I^n),\quad h(A,t)=\overline B(A;t)$$
    maps $g'(I^\omega\setminus K)$ off the set $I^\omega\setminus K$ arbitrarily closely to the identity map on $C(I^n)$. It means that  $g'(I^\omega)\setminus g'(K)$  is a $\sigma Z$-set and, consequently,  $g'(I^\omega)$ is a $Z$-set in  $C(I^n)$.

Finally, we need to check the property
\begin{equation}\label{eq:red}
 (g')^{-1}(Col(I^n))\setminus K= (g')^{-1}(Apo(I^n))\setminus K=M\setminus K.
\end{equation}

So,  suppose $q\notin K$. Then $\mu(q)>0$. If $q\in M$, then one can easily see that $g'(q)\in Col(I^n)$. If  $q\notin M$, it is convenient to consider  the maximal point $y=(y_1,y_2,\dots,y_n)  \in  H_0(f(q),\mu(q))$ in the lexicographic order $\prec$ on $I^n$.  The copy $y+\mu(q)\xi(q)$ of $\psi(\zeta(q))$  is not aposyndetic by~\eqref{eq:Col4} and it can be intersected by at most finitely many other isometric copies  $x+\mu(q)\xi(q)$, $x\in H_0(f(q),\mu(q))$, where $x=(x_1,x_2,y_3,\dots,y_n)\prec y$. Neither these copies nor adding the part
$$\overline B\bigl(H(f(q),\mu(q)); \frac18\mu(q)\bigr)\,  \cup\,
\bigcup_{x\in H_0(f(q),\mu(q))}(x+\mu(q)\theta(q))$$
affect the non-semi-local connectedness  caused by  $y+\mu(q)\xi(q)$. So, $g'(q)\notin Apo(I^n)$ and~\eqref{eq:red} is satisfied.

\end{proof}

Recall that a continuum $Y$ is called a \emph{Kelley continuum}  if  for each point $z\in Y$, each sequence of points $z_n\in Y$ converging to $z$ and each subcontinuum $Z\subset Y$ such that $z\in Z$, there is a sequence of subcontinua $Z_n\subset Y$, $z_n\in Z_n$, that converge to $Z$ (in the sense of the Hausdorff distance).

One can easily observe that the  proof of the strong $F_{\sigma\delta}$-universality of $Col(I^n)$ presented above applies directly to family $\mathcal K(I^n)$, i.e., \eqref{eq:red} is satisfied if $Col(I^n)$ is substituted with $\mathcal K(I^n)$.  Moreover, if $X$ is a locally connected continuum each of whose non-empty open subset contains a copy of $I^n$, $n\ge 2$, then $\mathcal K(X)$ is strongly $F_{\sigma\delta}$-universal in $C(X)$ because Approach II~\ref{ap2} applies to $\mathcal K(X)$ with mappings $\varphi_U$ being compositions of $\xi$~\eqref{xi} with embeddings $I^n\hookrightarrow U$. Exactly the same observation concerns  families $Apo(X)$ and $LC(X)$ of locally connected subcontinua of $X$.     We do not know if $\mathcal K(I^n)$ is contained in a $\sigma Z$-set in $C(I^n)$ but if we restrict the family to  decomposable continua, then the condition is satisfied for $n\ge 3$, since  $D(I^n)$ is a $\sigma Z$-set in $C(I^n)$  (Proposition~\ref{p:Apo2}).  Concerning the more general case of a Peano continuum $X$ as above, it is not known if decomposable subcontinua of $X$ form a  $\sigma Z$-set in $C(X)$, so, instead, we can restrict  $\mathcal K(X)$ (and $Apo(X)$ and $LC(X)$) to  $\mathcal D_2(X)$ which is a $\sigma Z$-set in $C(X)$ by Example~\ref{ex1}(1).
Summarizing, we get the following theorem.

\begin{theorem} Let $X$ be a locally connected continuum each of whose non-empty open subset contains a copy of $I^n$, $n\ge 2$. Then
 $\mathcal K(X)$, $Apo(X)$ and $LC(X)$ are strongly $F_{\sigma\delta}$-universal in $C(X)$. The families  $\mathcal K(X)\cap \mathcal D_2(X)$,   $Apo(X)\cap \mathcal D_2(X)$, $LC(X)\cap \mathcal D_2(X)$ are $F_{\sigma\delta}$-absorbers in $C(X)$ and
 $\mathcal K(I^n)\cap D(I^n)$ is an $F_{\sigma\delta}$-absorber in $C(I^n)$ for $n\ge 3$.
 \end{theorem}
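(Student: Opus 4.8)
The statement summarises the foregoing discussion, so the plan is to assemble the pieces already established and to supply the one point that has been left implicit — the passage from the full families to their intersections with $\mathcal D_2(X)$ (resp.\ $D(I^n)$). First I would record the descriptive complexity. By Proposition~\ref{p:Apo1} the set $Apo(X)$ is $F_{\sigma\delta}$ in $C(X)$; likewise $LC(X)$ is $F_{\sigma\delta}$ (Examples~\ref{ex2}(2)) and $\mathcal K(X)$ is $F_{\sigma\delta}$ by~\cite[Theorem~3.3]{Kr1}. Since $\mathcal D_2(X)$ is $F_\sigma$ (it is an $F_\sigma$-absorber, Examples~\ref{ex1}(1)) and $D(I^n)$ is $F_\sigma$ (Examples~\ref{ex1}(2)), the four sets $\mathcal K(X)\cap\mathcal D_2(X)$, $Apo(X)\cap\mathcal D_2(X)$, $LC(X)\cap\mathcal D_2(X)$ and $\mathcal K(I^n)\cap D(I^n)$ are again $F_{\sigma\delta}$, which is clause~(1) of the definition of an absorber; and since $\mathcal D_2(X)$ is a $\sigma Z$-set in $C(X)$ (Examples~\ref{ex1}(1)) and, for $n\ge3$, $D(I^n)$ is a $\sigma Z$-set in $C(I^n)$ (Proposition~\ref{p:Apo2}), these four sets lie inside $\sigma Z$-sets, which is clause~(2). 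For $\mathcal K(X)$, $Apo(X)$, $LC(X)$ only strong universality is asserted, so no $\sigma Z$-condition enters there.

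Strong $F_{\sigma\delta}$-universality of $\mathcal K(X)$, $Apo(X)$ and $LC(X)$ in $C(X)$ is exactly what was argued in the paragraph preceding the theorem. When $X=I^n$, $n\ge3$, one runs the construction of the $Z$-embedding $g'$ from the proof of Theorem~\ref{t:Apo3} with the target family $\mathcal A$ taken in turn to be $\mathcal K(I^n)$, $Apo(I^n)$ and $LC(I^n)$. For a general $X$ one invokes Approach~II of~\ref{ap2} with $\varphi_U=\iota_U\circ\xi$, where $\xi$ is the map~\eqref{xi} and $\iota_U\colon I^n\hookrightarrow U$ embeds an $n$-cell inside the given non-empty open set $U$, and one checks~\eqref{eq:uni} as follows. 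If $q\in M$, then $g(q)=A(q)\cup\bigcup_{U\in S(q)}\overline\varphi_U(q)$ is the Peano continuum $A(q)$ with the Peano continua $\overline\varphi_U(q)$ (copies of $\psi'(\zeta(q))$ with $\zeta(q)\in\widehat{c_0}$, or finite graphs) attached at single points along pairwise-disjoint pieces, hence a finite union of Peano continua with connected intersections, hence a Peano continuum; therefore $g(q)$ lies in each of $\mathcal K(X)$, $Apo(X)$, $LC(X)$. If $q\notin M$, then $\overline\varphi_{U_0}(q)=\varphi_{U_0}(q)$ is an embedded copy of $\psi'(\zeta(q))$ with $\zeta(q)\notin\widehat{c_0}$, which is non-aposyndetic by~\eqref{eq:Col4} and, as observed just before the theorem, also non-locally connected and non-Kelley; since the remaining pieces of $g(q)$ meet $\overline\varphi_{U_0}(q)$ only in the single point $\overline\varphi_{U_0}(q)\cap A(q)$, which can be kept away from the points witnessing this pathology, a small neighbourhood of such a witness in $g(q)$ coincides with one in $\overline\varphi_{U_0}(q)$, so the pathology persists and $g(q)$ lies in none of the three classes. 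That $g$ (resp.\ $g'$) is a $Z$-embedding agreeing with $f$ on $K$ and $\epsilon$-close to $f$ is built into the construction.

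It remains to upgrade this to strong $F_{\sigma\delta}$-universality of the intersection families, and this is the step I expect to need the most care. Since $g^{-1}(\mathcal K(X))\setminus K=M\setminus K$, and similarly for $Apo(X)$ and $LC(X)$, it is enough to arrange that $g(q)\in\mathcal D_2(X)$, respectively $g'(q)\in D(I^n)$, whenever $q\in M\setminus K$. For $\mathcal K(I^n)\cap D(I^n)$ this is automatic: for $q\in M\setminus K$ the continuum $g'(q)$ is nondegenerate and Peano, hence, being locally connected, contains a proper subcontinuum with non-empty interior and is therefore decomposable, so $g'(q)\in D(I^n)$, while for $q\notin M$ one already has $g'(q)\notin\mathcal K(I^n)$. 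For $\mathcal K(X)\cap\mathcal D_2(X)$, $Apo(X)\cap\mathcal D_2(X)$ and $LC(X)\cap\mathcal D_2(X)$ I would modify the reduction maps so that their values always carry a fixed $2$-cell: in~\eqref{xi} replace $\psi'$ by the map sending $(q_i)$ to $\psi'((q_i))$ together with a fixed $2$-cell joined to it by an arc, the rectangles $R_{ij}$ being relocated so as to stay disjoint from that cell. Adjoining such a rigid Peano piece along an arc far from the combs changes neither the colocal connectedness, nor the aposyndeticity, nor the local connectedness, nor the Kelley property of the values, so the new $\varphi_U$ still satisfies $\varphi_U^{-1}(\mathcal A)=M$, while now $g(q)$ contains a $2$-cell for every $q\notin K$ and hence $g(q)\in\mathcal D_2(X)$ there. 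With this modification $g^{-1}(\mathcal A\cap\mathcal D_2(X))\setminus K=g^{-1}(\mathcal A)\setminus K=M\setminus K$, which is clause~(3), completing the proof; the only genuinely delicate points along the way are making precise that, for $q\notin M$, the three failures exhibited by the copy $\varphi_{U_0}(q)$ of $\psi'(\zeta(q))$ are local and so survive the gluing, and the routine but fussy verification that the auxiliary $2$-cell can be adjoined without disturbing any of the four topological properties being tested.
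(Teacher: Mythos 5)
Your argument is correct and follows the paper's own route: the Borel-class and $\sigma Z$-facts (Proposition~\ref{p:Apo1}, Proposition~\ref{p:Apo2}, Examples~\ref{ex1}(1)--(2), \cite[Theorem 3.3]{Kr1}), together with strong $F_{\sigma\delta}$-universality obtained from the $g'$-construction of Theorem~\ref{t:Apo3} in the case $X=I^n$ and from Approach II~\ref{ap2} with $\varphi_U$ equal to embedded copies of $\xi$ from~\eqref{xi} in the general case. Your additional device of adjoining a fixed $2$-cell to the values of $\psi'$, so that the reduction lands in $\mathcal D_2(X)$ on $M\setminus K$ and the intersections with $\mathcal D_2(X)$ remain strongly universal, is a harmless and legitimate explicitation of a point the paper leaves implicit (with solid rectangles the sets $\psi'(\zeta(q))$ are already $2$-dimensional except when $\zeta(q)$ is the zero sequence, which one can also rule out by composing $\zeta$ with $(x_i)\mapsto(1,x_1,x_2,\dots)$).
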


\begin{remark}
The more general Approach II~\ref{ap2} cannot be directly applied for $Col(X)$ with  mappings $\varphi_U$ taken as copies of the reduction $\xi$~\eqref{xi}, since the 1-dimensional part $A(q)$ of $g(q)$ can spoil the colocal connectedness of $g(q)$ and the remedy of surrunding it by a small closed ball (similarly as in the proof of Theorem~\ref{t:Apo3})  may kill the ``one-to one'' property of  $g(q)$.
\end{remark}

\section{Closed separators of $I^n$}\label{sep}
\begin{proposition}\label{p:sep}
If $X$ is a locally connected continuum, then the family $\mathcal S(X)$ of all compact separators of $X$ is an $F_\sigma$-subset of $2^X$.
\end{proposition}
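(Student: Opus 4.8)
The plan is to realize $\mathcal S(X)$ as a continuous image of an $F_\sigma$ (in fact, open-in-compact) subset of the compact space $2^X\times 2^X$. The starting point is a purely set-theoretic reformulation of separation in a continuum: a closed set $C\subseteq X$ separates $X$ if and only if there exist closed sets $P,Q\subsetneq X$ with $P\cup Q=X$ and $P\cap Q=C$. For the nontrivial implication, if $X\setminus C=U\cup V$ with $U,V$ disjoint, nonempty and open, I would set $P=\cl U\cup C$ and $Q=\cl V\cup C$; since $U$ and $V$ are clopen in $X\setminus C$ one has $\cl U\cap\cl V\subseteq C$, whence $P\cap Q=C$, while $P\cup Q=X$ and $P\ne X\ne Q$ are immediate. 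The converse is trivial, as then $X\setminus C=(X\setminus P)\cup(X\setminus Q)$ is a partition into two nonempty open sets. Only compactness and connectedness of $X$ are used here, and $C$ (hence $P,Q$) is automatically nonempty because $X$ is a continuum.

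The crux is to prove that the intersection map $\sigma\colon\{(P,Q)\in 2^X\times 2^X:P\cup Q=X\}\to 2^X$, $\sigma(P,Q)=P\cap Q$, is well defined and continuous; this is the place where local connectedness of $X$ enters. It is well defined since $P\cap Q=\emptyset$ together with $P\cup Q=X$ would make $P,Q$ complementary nonempty clopen sets. Upper semicontinuity, $\Ls(P_n\cap Q_n)\subseteq P\cap Q$ whenever $P_n\to P$ and $Q_n\to Q$, requires no extra hypothesis, as a limit of points of $P_n\cap Q_n$ lies in $\Ls P_n\cap\Ls Q_n=P\cap Q$. For lower semicontinuity, given $x\in P\cap Q$ I would pick $p_n\in P_n$ and $q_n\in Q_n$ with $p_n,q_n\to x$ and then, using that a compact locally connected metric space is \emph{uniformly locally connected}, join $p_n$ and $q_n$ by a connected set $E_n\subseteq X$ with $\diam E_n\to 0$. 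Since $E_n\subseteq X=P_n\cup Q_n$ is connected and meets both closed sets $P_n$ and $Q_n$, it meets $P_n\cap Q_n$; any point chosen there converges to $x$, so $x\in\Li(P_n\cap Q_n)$.

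With the reformulation and the continuity of $\sigma$ in hand, the conclusion is short:
\[
\mathcal S(X)=\sigma\bigl(\{(P,Q):P\cup Q=X\}\cap(\{P\ne X\}\times\{Q\ne X\})\bigr).
\]
The set $\{(P,Q):P\cup Q=X\}$ is closed in the compact space $2^X\times 2^X$, hence compact, and $\{P\ne X\}=2^X\setminus\{X\}$ is open, so the domain above is an open subset of a compact metric space, therefore $\sigma$-compact; its continuous image $\mathcal S(X)$ is then $\sigma$-compact, i.e.\ $F_\sigma$, in $2^X$. Degenerate cases are harmless: if $X$ is a single point there is no proper nonempty $P$, and the formula correctly gives $\mathcal S(X)=\emptyset$. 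The main obstacle is exactly the continuity claim for $\sigma$: intersection of compacta is notoriously discontinuous in the Hausdorff metric, and it is the combination of the constraint $P\cup Q=X$ with the (uniform) local connectedness of $X$ that restores lower semicontinuity. A minor point still needing care is checking that replacing $U,V$ by the closed sets $\cl U\cup C$ and $\cl V\cup C$ produces intersection exactly $C$ and not something larger.
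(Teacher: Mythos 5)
Your proof is correct, but it follows a genuinely different route from the paper. The paper first invokes the classical equivalence (for compact locally connected spaces) between separating and cutting between two points, and then encodes ``$S$ cuts $X$ between some pair of points'' by quantifying over a countable dense family $\mathcal E$ of pointed continua: for fixed $(x,y)$ the condition $\{x,y\}\cap S=\emptyset$ is open and the condition that every continuum through $x,y$ from the countable family meets $S$ is closed, so $\mathcal S(X)$ becomes a countable union of (open $\cap$ closed) sets, hence $F_\sigma$; the only delicate point there is the density argument justifying the passage to the countable family. You instead reformulate separation via closed covers ($C$ separates iff $C=P\cap Q$ for proper closed $P,Q$ with $P\cup Q=X$, which you verify correctly, including $\cl U\cap\cl V\subset C$) and then prove that the intersection map $\sigma(P,Q)=P\cap Q$ is continuous on the compactum $\{(P,Q):P\cup Q=X\}$; your lower semicontinuity argument, using uniform local connectedness of $X$ (the standard Lebesgue-number consequence of compactness plus local connectedness) and the fact that a small connected set joining $p_n\in P_n$ to $q_n\in Q_n$ inside $P_n\cup Q_n=X$ must meet $P_n\cap Q_n$, is sound, and the upper semicontinuity needs nothing extra. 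This exhibits $\mathcal S(X)$ as a continuous image of an open subset of a compact metric space, i.e.\ as a $\sigma$-compact set, which is exactly $F_\sigma$ in $2^X$. What your approach buys is a structural, quantifier-free description of $\mathcal S(X)$ resting only on the classical uniform local connectedness fact and on the observation that the covering constraint $P\cup Q=X$ restores continuity of intersection; what the paper's approach buys is uniformity with the logical-formula style of complexity estimates used elsewhere in the paper (and it localizes the use of local connectedness in the separation-versus-cut equivalence rather than in a continuity statement). Both uses of local connectedness are essential in their respective arguments, and both proofs yield the same conclusion.
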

\begin{proof}
Since $X$ is locally connected, a closed subset $S$ separates $X$ if and only if $S$ cuts $X$ between two points. Let
$\mathcal E$ be a countable dense subset of the family  $\{(C,x,y)\in C(X)\times X^2: x\neq y, \  x,y \in C\}$. Denote by $proj_1$ and $proj_2$ the projections of $C(X)\times X^2$ onto $C(X)$ and $X^2$, respectively.
We can now express the definition of a closed separator (= cut) using $\mathcal E$:
\begin{quote}
$S$ is a closed separator of $X$ if and only if
\begin{multline}\label{e:sep}
\exists (x,y)\in proj_2(\mathcal E) \quad (\{x,y\}\cap S=\emptyset) \quad\text{and}\\
\forall C\in proj_1(\mathcal E\cap (proj_2)^{-1}(x,y)) \quad (C\cap S \neq\emptyset).
\end{multline}
\end{quote}
Since the two quantifiers in~\eqref{e:sep} are taken over countable sets, the conclusion follows.

\end{proof}

\begin{proposition}\label{p1:sep}
If a space $X$ contains an open subset homeomorphic to the combinatorial interior $\inte (I^n) =(0,1)^n$ of $I^n$, $2\le n<\infty$, then  $\mathcal S(X)\cap C(X)$ is $F_\sigma$-universal.
\end{proposition}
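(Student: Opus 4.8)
I will prove $F_\sigma$-universality by producing, for each $F_\sigma$ set $M\subseteq I^\omega$, an embedding $f\colon I^\omega\to C(X)$ with $f^{-1}(\mathcal S(X)\cap C(X))=M$ (here $C(X)$ is a Hilbert cube and $X$ is a locally connected continuum, which I use below). It is enough to do this once for the fixed $F_\sigma$-universal set $\mathcal N=\{(q_i)\in I^\omega:\ q_i=1\text{ for some }i\}$; indeed, writing $M=\bigcup_k M_k$ with $M_k$ compact and increasing and $\phi_k(x)=\max\{0,1-\operatorname{dist}(x,M_k)\}$ (so $\phi_k^{-1}(1)=M_k$), the map $x\mapsto\bigl(\tfrac14+\tfrac12x_1,\phi_1(x),\tfrac14+\tfrac12x_2,\phi_2(x),\dots\bigr)$ is an embedding $I^\omega\to I^\omega$ reducing $M$ to $\mathcal N$. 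So I only need an embedding $\psi\colon I^\omega\to C(X)$ with $\psi^{-1}(\mathcal S(X))=\mathcal N$; then $f=\psi\circ h$, where $h$ is an embedding with $h^{-1}(\mathcal N)=M$, does the job (note $\psi(q)$ is always a continuum, so $\psi^{-1}(\mathcal S(X)\cap C(X))=\psi^{-1}(\mathcal S(X))$).

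\textbf{Construction of $\psi$.} Fix a round ball $B_0$ with $\cl B_0\subseteq U$ and identify $U$ with $\mathbb R^n$. Choose a point $p_0$, an arc ("handle") $\rho\subseteq B_0$ with endpoint $p_0$, and pairwise disjoint round $(n-1)$-spheres $\sigma_i\subseteq B_0\setminus\{p_0\}$ with $\sigma_i\to\{p_0\}$, each attached to $\rho$ by a short arc so that $\sigma_i$ and its attaching arc are disjoint from the open ball $B_i$ bounded by $\sigma_i$. For $q=(q_i)$ with $q_i<1$ let $D_i(q_i)$ be $\sigma_i$ with the open spherical cap of angular radius $\tfrac\pi2(1-q_i)$ about a fixed pole $P_i$ (chosen off the attaching point) deleted — a round $(n-1)$-cell — and put $D_i(1)=\sigma_i$; set
\[
\psi(q)=\rho\cup\bigcup_i D_i(q_i).
\]
Then $\psi(q)$ is a continuum in $B_0\subseteq U\subseteq X$; $\psi$ is continuous (for large $i$ the cell $D_i(q_i)$ lies near $p_0$); and $\psi$ is injective, because the part of $\psi(q)$ inside a fixed small ball about the centre of $\sigma_i$ recovers $D_i(q_i)$, and $q_i\mapsto D_i(q_i)$ is injective. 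Hence $\psi$ is an embedding.

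\textbf{Verifying $\psi^{-1}(\mathcal S(X))=\mathcal N$.} The crux is that $\psi(q)$ separates $X$ iff it separates $U\cong\mathbb R^n$. \emph{If it separates $U$:} a bounded complementary domain $V$ of $\psi(q)$ has $\cl V\subseteq U$, so $V$ is clopen and proper in $X\setminus\psi(q)$, whence $\psi(q)$ separates $X$. \emph{If it does not separate $U$:} since $\psi(q)$ is a compactum inside $\cl B_0\subseteq U$, one gets $\bd_X U\subseteq\cl_X(U\setminus\psi(q))$; and since $X$ is a locally connected continuum, every component of $X\setminus U$ meets $\bd_X U$ (else it would be a proper clopen subset of $X$), hence meets $\cl_X(U\setminus\psi(q))$, so $X\setminus\psi(q)=(U\setminus\psi(q))\cup(X\setminus U)$ is connected. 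Now if $q\in\mathcal N$, say $q_i=1$, then $\sigma_i=D_i(q_i)\subseteq\psi(q)$ bounds the ball $B_i$, which is disjoint from $\psi(q)$, so $\psi(q)$ separates $U$ (hence $X$); and if $q\notin\mathcal N$, then $\psi(q)$ is the union of an arc and a null sequence of $(n-1)$-cells attached to it — a cell-like compactum — so $\check H^{n-1}(\psi(q))=0$, and by Alexander duality in $S^n$ the set $U\setminus\psi(q)$ is connected, i.e. $\psi(q)$ does not separate $U$ (hence not $X$). Thus $\psi^{-1}(\mathcal S(X))=\mathcal N$, completing the proof.

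\textbf{Main obstacle.} The one nontrivial point is the assertion that the "broom of cells" $\psi(q)$, $q\notin\mathcal N$, with its cells accumulating at $p_0\in\rho$, does not separate $\mathbb R^n$. I would prove this via the neighbourhood characterisation of cell-likeness — thin regular neighbourhoods of the contractible finite complexes $\rho\cup D_1(q_1)\cup\dots\cup D_k(q_k)$ together with a small ball about $p_0$ absorbing the remaining cells — combined with continuity of Čech cohomology and Alexander duality; this is essentially the classical fact that an arc together with a null sequence of cells attached to it is cell-like and hence nonseparating. Care is also needed in the geometric set-up so that in the case $q_i=1$ the ball $B_i$ is genuinely disjoint from all of $\psi(q)$ — this is exactly why the spheres are attached to the handle from outside the balls they bound.
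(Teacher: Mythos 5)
Your argument is correct in substance and shares the paper's overall strategy: both proofs reduce $F_\sigma$-universality to one explicit embedding of $I^\omega$ into the hyperspace of a Euclidean patch of $X$, whose value at $q$ contains a closed $(n-1)$-sphere-like separator exactly when $q$ lies in a fixed $F_\sigma$-universal subset of $I^\omega$, plus the observation that a compactum inside the open patch separates $X$ iff it separates the patch. The implementations differ genuinely, though. The paper reduces to the pseudo-boundary $B(I^\omega)$ (quoting its absorber property), codes $q_i$ by deforming the cube boundary $\partial(\Delta_i)$ through finite graphs, and gets non-separation for $q$ in the pseudo-interior from dimension theory (a $1$-dimensional compactum cannot separate $I^n$ for $n\ge 3$), which forces a separate ad hoc construction for $n=2$. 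You reduce instead to $\{q:\exists i\ q_i=1\}$ via your own explicit embedding (which is fine), code $q_i$ by the size of the spherical cap removed from a round $(n-1)$-sphere, and get non-separation from algebraic topology: triviality of $\check H^{n-1}$ of the "broom" plus Alexander duality; this buys a uniform treatment of all $n\ge 2$, at the price that your crux --- the arc with a null sequence of $(n-1)$-cells attached does not separate $\mathbb R^n$ --- needs the cohomological argument you only sketch, whereas the paper's dichotomy (full cube boundary versus a $1$-dimensional set) is immediate. Your sketch does close: take the decreasing compacta consisting of the finite broom with $k$ decorations united with a small closed ball about $p_0$ swallowing the remaining ones; each has trivial $\check H^{n-1}$ by Mayer--Vietoris, and continuity of \v{C}ech cohomology passes this to the intersection $\psi(q)$, so duality gives a connected complement. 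Two small points to repair in the write-up: the attaching arcs must be included in $\psi(q)$ (otherwise it is not connected), e.g.\ absorb them into $\rho$, which then is a finite-branching tree rather than an arc; and the pole $P_i$ should be chosen antipodal to (or at angular distance at least $\pi/2$ from) the attaching point, since at $q_i=0$ the deleted cap is an open hemisphere and a careless choice of $P_i$ would remove the attaching point and disconnect $\psi(q)$. Finally, your standing assumption that $X$ is a continuum is genuinely used in the step "separates $U$ iff separates $X$" (the paper's proof needs connectedness implicitly at the same point, despite the statement saying only "a space $X$"); local connectedness, however, is not needed there --- compactness and connectedness of $X$ suffice.
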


\begin{proof}
We can assume,  without loss of generality, that $I^n\subset X$ and $(0,1)^n$ is open in $X$.
Since the pseudo-boundary $B(I^\omega)$ is an $F_\sigma$-absorber, it is enough to construct an embedding
$\Psi:I^\omega\to C(X)$ such that
\begin{equation}\label{e:psi}
\Psi((q_i))\in \mathcal S(X)\quad\text{if and only if}\quad (q_i)\in B(I^\omega).
\end{equation}

 Denote  $J_i=[\frac1{2i+1},\frac1{2i}]$ and let $\partial(\Delta_i)$ be the combinatorial boundary of the cube $$\Delta_i=J_i \times \left[\frac12,\frac12+\frac1{4i(i+1)}\right]^{n-1}, \quad i=1,2,\dots.$$
 For each $i$, there is a deformation $h_i(A,t)$ of $2^{\Delta_i}$ through finite sets in
 $$\left[\frac1{2i+1}+t,\frac1{2i}-t\right]\times \left[\frac12+t,\frac12+\frac1{4i(i+1)}-t\right]^{n-1},\  0<t<\frac1{8i(i+1)}$$
 (look at $h_i$ as $H_0$ considered in Section~\ref{su} with $I^n$ replaced by $\Delta_i$).
 Given  $t\in I$, choose points
 $$x_i(t)=\left(\frac1{2i+1}+t\frac1{4i(i+1)},\frac12,\dots,\frac12\right)$$ and
 $$y_i(t)=\left(\frac1{2i+1}+t\frac1{4i(i+1)},\frac12-\frac1{4i(i+1)},\frac12,\dots,\frac12\right).$$

 For $0<t<\frac1{8i(i+1)}$ and each $i$, connect points of the set  $h_i(A,t)\cup \{x_i\}$ following formula~\eqref{eq:graphs}:
\begin{equation}\label{eq:H_i}
H_i(A,t)=  \bigcup_{a,b\in h_i(A,t)\cup \{x_i\}} (\overline{ab}\cap (\overline{B}(a; 2t)\cup  \overline{B}(b; 2t)))
\end{equation}
and put $H_i(A,0)=A$.
Thus $H_i$ is a deformation $2^{\Delta_i}$ through finite graphs in $\Delta_i$ that meet the edge $J_i\times \{(\frac12,\dots,\frac12)\}$ at the single point $x_i$.
\begin{figure}[h]
\setlength{\unitlength}{1mm}
\begin{picture}(135,30)
\thicklines
\put(0,0){\line(1,0){120}}
\put(40,0){\framebox(20,20)}
\put(40,0){\line(0,-1){20}}
\put(30,0){\line(0,-1){6}}
\put(17,0){\line(0,-1){3}}

\put(108,-4){\makebox(0,0){$I\times \left(\frac12,\frac12,\dots\right)$}}

\put(60,20){\line(1,1){6}}
\put(60,0){\line(1,1){6}}
\put(66,6){\line(0,1){20}}
\put(46,26){\line(1,0){20}}
\put(40,20){\line(1,1){6}}
\put(24,0){\framebox(6,6)}
\put(17,0){\framebox(3,3)}

\put(45,-3){\makebox(0,0){\scriptsize$x_1(q_1)$}}
\put(45,-21){\makebox(0,0){\scriptsize$y_1(q_1)$}}
\put(35,-2){\makebox(0,0){\scriptsize$x_2(q_2)$}}
\put(35,-6){\makebox(0,0){\scriptsize$y_2(q_2)$}}

\put(50,30){\makebox(0,0){\small$\partial(\Delta_1)$}}

\put(30,12){\makebox(0,0){\scriptsize$\partial(\Delta_2)$}}

\put(19,7){\makebox(0,0){\scriptsize$\partial(\Delta_3)$}}

\put(7,3){\makebox(0,0){$\cdots$}}

\put(30,6){\line(1,1){4}}
\put(30,0){\line(1,1){4}}
\put(34,4){\line(0,1){6}}
\put(27.5,10){\line(1,0){7}}
\put(24,6){\line(1,1){4}}

\put(17,3){\line(1,1){2}}
\put(20,3){\line(1,1){2}}
\put(20,0){\line(1,1){2}}
\put(22,5){\line(0,-1){3}}
\put(19,5){\line(1,0){3}}
\end{picture}
\vspace{2cm}
\caption{ $\Psi\bigl((0,1,0,\dots)\bigr)$}\label{fig2}
\end{figure}
The function $\Psi:I^\omega\to  C(I^n)$ defined by
\begin{multline}\label{eq:w}
 \Psi\bigl((q_i)\bigr)=\\ I\times \left\{\left(\frac12,\dots,\frac12\right)\right\}\ \cup \ \bigcup_i H_i\left(\partial(\Delta_i),\frac{q_i(1-q_i)}{8i(i+1)}\right)\cup \bigcup_i \overline{x_i(q_i)y_i(q_i)}
 \end{multline}
 (see Figure~\ref{fig2}) is  continuous and it is 1-1 since the correspondence $(q_i)\mapsto (\overline{x_i(q_i)y_i(q_i)})$ is 1-1.
It satisfies~\eqref{e:psi} for $n\ge 3$ because if $(q_i)\in B(I^\omega)$, then  $\Psi\bigl((q_i)\bigr)$ contains $\partial(\Delta_i)$ for some $i$ which separates $\inte(I^n)$ --- consequently, by construction, $\Psi\bigl((q_i)\bigr)$ separates  $X$; otherwise,  $\Psi\bigl((q_i)\bigr)$ is one-dimensional, so it does not separate $I^n$, $n\ge 3$.

 If $n=2$, the above construction does not work, since $\Psi\bigl((q_i)\bigr)$ might separate $I^2$ for $(q_i)$ in the pseudo-interior $s$. In this case we can, however, define an appropriate  embedding  $\Psi$ differently much easier. Given $(q_i)\in I^\omega$,  denote $p_{2i}=x_i(q_i)$,  $p_{2i-1}=x_i(1-q_i)$, $d_i=\bigl(\frac1{2i+1},\frac12\bigr)$, $A_i=\overline{d_ip_i}\setminus \{d_i,p_i\}$.
Put
\begin{equation}\label{Psi2}
\Psi\bigl((q_i)\bigr)= I\times \left\{\frac12\right\}\  \cup \ \bigcup_i \left(\partial(\Delta_i)\setminus A_i\right).
\end{equation}
\end{proof}
\begin{theorem}\label{t:sep}
Let $X$ be a locally connected continuum such that each open non-empty subset of $X$ contains a copy of $(0,1)^n$, $3\le n<\infty$, as an open subset and no subset of dimension $\le 1$ separates $X$. Then the families $\mathcal S(X)$ and  $\mathcal S(X)\cap C(X)$ are $F_\sigma$-absorbers in $2^{X}$ and $C(X)$, respectively.
\end{theorem}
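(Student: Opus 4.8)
The plan is to verify the three defining conditions of an $F_\sigma$-absorber for each of $\mathcal S(X)$ in $2^X$ and $\mathcal S(X)\cap C(X)$ in $C(X)$: (1) membership in the class $F_\sigma$, (2) containment in a $\sigma Z$-set, and (3) strong $F_\sigma$-universality. Condition (1) is already essentially done: Proposition~\ref{p:sep} gives that $\mathcal S(X)$ is $F_\sigma$ in $2^X$, and since $C(X)$ is closed in $2^X$, the intersection $\mathcal S(X)\cap C(X)$ is $F_\sigma$ in $C(X)$. For condition (2), I would argue that a closed separator (equivalently, by local connectedness and Observation~\ref{o1}, a cut) of $X$ must be somewhere dense is false --- rather, the point is that the family of \emph{nowhere dense} compacta is a $\sigma Z$-set in $2^X$ (this is standard for the Hilbert cube $2^X$), so it suffices to show every separator of $X$ is nowhere dense; but that is not true either. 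Instead, the cleaner route is: a separator $S$ of $X$ cannot contain a nonempty open subset of $X$ that is ``thick'' --- more precisely, I would show directly that $\mathcal S(X)$ lies in the $\sigma Z$-set of compacta with empty interior relative to... Let me restate: the right statement is that $\mathcal S(X)$ is contained in the complement of a dense $G_\delta$ of $2^X$; concretely, the set of $S$ with $\inte_X(S)=\emptyset$ is not the target. The honest approach: every closed separator $S$ of $X$ fails to contain any copy of a Hilbert cube sitting openly, hence (since each open subset of $X$ contains $(0,1)^n$) $S$ has empty interior, so $S$ is nowhere dense; the family of nowhere dense closed subsets of a Hilbert cube $2^X$ is a $\sigma Z$-set, giving (2). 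For $C(X)$ one intersects with $C(X)$ and uses that the nowhere dense continua form a $\sigma Z$-set in $C(X)$.

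For condition (3), strong $F_\sigma$-universality, I would use Approach I of Section~\ref{ap1} for $X=I^n$ and Approach II of Section~\ref{ap2} for the general Peano continuum $X$ whose open subsets contain an open copy of $(0,1)^n$. The key input for Approach II is a family of continuous maps $\varphi_U:I^\omega\to C(U)$, for each nonempty open $U\subset X$, with $\varphi_U^{-1}(\mathcal S(X)\cap C(X))=M$, where $M\subset I^\omega$ is the prescribed $F_\sigma$ set. These maps are obtained by taking the embedding $\Psi:I^\omega\to C(I^n)$ constructed in Proposition~\ref{p1:sep} (which satisfies $\Psi((q_i))\in\mathcal S(I^n)$ iff $(q_i)\in B(I^\omega)$, and more generally, after composing with a reduction $I^\omega\to I^\omega$ of $M$ to $B(I^\omega)$, satisfies $\Psi'^{-1}(\mathcal S)=M$) and composing with an embedding $I^n\hookrightarrow U$ whose image has the combinatorial interior of $I^n$ mapped to an open subset of $U$ and hence of $X$. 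One must check that separation is detected locally: by Observation~\ref{o1}, $\varphi_U(q)$ separates $X$ iff the corresponding $\Psi'$-value separates the open chart $(0,1)^n$, because the image of $\partial(\Delta_i)$ separates a small cube and, being contained in an open Euclidean chart of $X$, separates $X$; while if $\Psi'(q)$ is $1$-dimensional it cannot separate $X$ since no subset of dimension $\le 1$ separates $X$ by hypothesis. One then invokes the machinery of Section~\ref{su}: the image $g(q)$ built there has the form $A(q)\cup\bigcup_{U\in S(q)}\overline\varphi_U(q)$ with $A(q)$ a $1$-dimensional locally connected continuum and $\overline\varphi_U(q)=\varphi_U(q)$ for at least one $U\in S(q)$; adding the $1$-dimensional ``connecting'' part $A(q)$ and the finite-graph approximations $\overline\varphi_U(q)$ for the other $U$ cannot create or destroy a separation of $X$, because $1$-dimensional and finite-graph pieces are too small to separate (again by the dimension hypothesis), and a separating $\partial(\Delta_i)$ inside one genuine $\varphi_U(q)$ still separates $X$. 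Hence $g^{-1}(\mathcal S(X)\cap C(X))\setminus K=M\setminus K$, which is exactly~\eqref{eq:uni}.

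The main obstacle I anticipate is precisely the locality of separation: ensuring that ``$\varphi_U(q)$ separates $X$'' is equivalent to ``the coded point lies in the target set $M$,'' uniformly over all open $U$, and that the extra ingredients glued on in the construction of $g$ (the compactum/graph part $A(q)$, the auxiliary circles-coding map $\theta$, and the finite-graph thickenings $\overline\varphi_{U'}(q)$ for $U'\neq U$) never spuriously separate $X$ and never destroy the separation produced by a genuine $\partial(\Delta_i)$. The first half is handled by Observation~\ref{o1} together with the fact that $(0,1)^n$ embeds openly in $X$: a closed subset of the chart that separates the chart extends/restricts to a cut of $X$. The second half rests on the dimension hypothesis: every added piece has dimension $\le 1$ (the graphs, the arcs, the $1$-dimensional locally connected continuum $A(q)$) and $X$ is not separated by any set of dimension $\le 1$, so these pieces are separation-transparent; dually, removing or adding a lower-dimensional set to a separator leaves a separator, since by Observation~\ref{o1} the cutting property between two fixed points is unaffected by small $1$-dimensional perturbations in general position. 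For the case $X=I^n$ one uses Approach I verbatim, replacing the reduction $\xi$ of Section~\ref{ap1} by $\Psi'=\Psi\circ(\text{reduction of }M\text{ to }B(I^\omega))$; the arguments showing $g$ is a $Z$-embedding carry over unchanged, and~\eqref{eq:uni} follows from the same local-separation analysis. Finally, the variant for nowhere dense separators forming $D_2(F_\sigma)$-absorbers, alluded to in the paragraph preceding this theorem, would be treated separately; it is not part of this statement.
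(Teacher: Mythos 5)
Your verification of condition (2) (containment in a $\sigma Z$-set) is where the proposal breaks down, and it fails at two distinct points. First, closed separators of $X$ need not be nowhere dense: already in $I^n$ the slab $[\tfrac13,\tfrac23]\times I^{n-1}$ separates and has nonempty interior, and the same phenomenon occurs in any $X$ satisfying the hypotheses, since $X$ contains an open copy of $(0,1)^n$ and a thickened separating slab of such a chart separates $X$; your justification (``a separator cannot contain an open Hilbert cube/$n$-cell, hence has empty interior'') is simply false, as you yourself half-noticed before asserting it anyway. Second, even for the genuinely nowhere dense separators the proposed enveloping set does not work: the family $\mathcal N(X)$ of nowhere dense closed subsets of $X$ is the complement in $2^X$ of the family of compacta with nonempty interior, which by Proposition~\ref{p:nd} and Example~\ref{ex1}(3) is an $F_\sigma$-absorber and hence a meager $\sigma Z$-set; so $\mathcal N(X)$ is a comeager dense $G_\delta$ in the Hilbert cube $2^X$ and cannot be (contained in) a $\sigma Z$-set --- you have the ``small'' and ``large'' families interchanged. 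The paper avoids condition (2) entirely by invoking \cite[Theorem 5.3]{BGM}: for an $F_\sigma$ subset of a Hilbert cube, being an $F_\sigma$-absorber is equivalent to being strongly $F_\sigma$-universal, so after (1) and (3) nothing remains to be checked (and the $\sigma Z$ containment of $\mathcal S(X)$ is then a consequence of the theorem, used later in Theorem~\ref{t:ndsep}, not an input to its proof). Without this substitute, your proof of the theorem as stated is incomplete.

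Your treatment of the other two conditions essentially coincides with the paper's: Proposition~\ref{p:sep} for the Borel class, and, for strong $F_\sigma$-universality, Approach II~\ref{ap2} with $\varphi_U$ obtained by composing $\Psi\chi$ (where $\chi$ reduces $M$ to $B(I^\omega)$ and $\Psi$ is the map~\eqref{eq:w}) with an embedding of $C(I^n)$ into the hyperspace of an open chart inside $U$. The key local-separation check is also the paper's: for $q\notin K$, if $q\notin M$ then $g(q)$ is a finite union of sets of dimension $\le 1$, hence does not separate $X$ by hypothesis, while if $q\in M$ the copy of $\partial(\Delta_i)$ separates the open chart and therefore $X$, and $g(q)\supset\varphi_U(q)$ still separates because $g(q)$ has dimension $<n$ and so cannot cover either complementary component. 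So condition (3) is sound; only the $\sigma Z$-step needs to be replaced by the appeal to \cite{BGM} (or another correct argument).
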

\begin{proof}
Since being an $F_\sigma$-absorber in a Hilbert cube  is equivalent, for an $F_\sigma$-set,  to being strongly $F_\sigma$-universal~\cite[Theorem 5.3]{BGM},
it remains to prove the strong $F_\sigma$-universality. To this end, we are going  to use Approach II~\ref{ap2}. The pseudo-boundary
 $B(I^\omega)$ being strongly $F_\sigma$-universal, there exists for each $M\subset I^\omega$, $M\in F_\sigma$, a mapping $\chi: I^\omega \to I^\omega$ such that
$\chi^{-1}(B(I^\omega))=M$. Hence, the composition $\varphi=\Psi\chi:I^\omega \to C(I^n)$ ($\Psi$ as in~\eqref{eq:w}) satisfies
$\varphi^{-1}(\mathcal S(I^n))=M$ for every $q\in I^\omega$.  For each open non-empty subset $U$ of $X$ and an open copy of $(0,1)^n$ in $U$, let $\varphi_U:I^\omega \to C(U)$ be a composition of $\varphi$ with an embedding of $C(I^n)$ into the hyperspace of that copy. Then $\varphi_U(q)$ separates $X$ iff $\varphi(q)$ separates $I^n$ iff $q\in B(I^\omega)$.

 The properties of $g$ in Approach II~\ref{ap2} yield that  $g$ satisfies~\eqref{eq:uni} for $\mathcal A=S(X)$ as well as for $\mathcal A=S(X)\cap C(X)$. In fact, for $q\notin K$,   $g\bigl(q\bigr)$ separates $X$ if and only if $\varphi_U(q)$ does, because neither the zero- nor one-dimensional part $A(q)$ of $g\bigl(q\bigr)$ do  affect separation of $X$ by  copies $\varphi_U(q)$ for $n\ge 3$.
This completes the proof.
\end{proof}

\begin{corollary}
The families $\mathcal S(X)$ and  $\mathcal S(X)\cap C(X)$ are $F_\sigma$-absorbers in $2^{X}$ and $C(X)$, respectively, if $X$ is a continuum which is an $n$-manifold (with or without boundary), $3\le n <\infty$.
\end{corollary}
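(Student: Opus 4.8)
The corollary is a direct application of Theorem~\ref{t:sep}: one only has to verify that a continuum $X$ which is an $n$-manifold with or without boundary, $3\le n<\infty$, satisfies the three hypotheses of that theorem, namely that $X$ is a locally connected continuum, that every non-empty open subset of $X$ contains a copy of $(0,1)^n$ open in $X$, and that no closed subset of $X$ of dimension $\le 1$ separates $X$ (recall that, by the definitions in the Preliminaries, only closed sets can be separators, so this is the natural reading of the hypothesis).

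The first two properties are immediate. A compact $n$-manifold is a compact connected separable metric space, hence a continuum, and being locally Euclidean it is locally connected. For the second property, the manifold boundary $\partial X$ is a compact $(n-1)$-manifold, in particular nowhere dense in $X$, so the manifold interior $X\setminus\partial X$ is open and dense in $X$. Any non-empty open $U\subseteq X$ therefore meets $X\setminus\partial X$, and a Euclidean chart around a point of $U\cap(X\setminus\partial X)$ provides an open subset of $U$ homeomorphic to an open ball of $\mathbb R^n$, hence containing a copy of $(0,1)^n$ that is open in $X$.

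The real content is the third hypothesis. Let $A\subseteq X$ be closed with $\dim A\le 1$; since $n\ge 3$ we have $\dim A\le n-2$, and $A$ is nowhere dense in $X$ because a closed set of dimension less than $n$ has empty interior. I would use the classical theorem of dimension theory that a closed subset of $\mathbb R^m$ of dimension $\le m-2$ does not disconnect $\mathbb R^m$, and promote it to $X$. Assume first $\partial X=\emptyset$, so that $X$ is a connected, locally connected space covered by open sets homeomorphic to $\mathbb R^n$; then any two points $p,q\in X$ can be joined by a finite chain $U_1,\dots,U_k$ of such open sets with $p\in U_1$, $q\in U_k$ and $U_i\cap U_{i+1}\neq\emptyset$. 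Suppose $p,q\notin A$. For each $i$ the set $A\cap U_i$ is closed in $U_i\cong\mathbb R^n$ with $\dim(A\cap U_i)\le n-2$, so $U_i\setminus A$ is connected by the Euclidean fact; moreover each non-empty open set $U_i\cap U_{i+1}$ contains a point $z_i\notin A$ since $A$ is nowhere dense. Chaining $p,z_1,z_2,\dots,z_{k-1},q$ through the connected sets $U_1\setminus A,\dots,U_k\setminus A$ shows that $p$ and $q$ lie in one component of $X\setminus A$, so $X\setminus A$ is connected. If $\partial X\neq\emptyset$, the same conclusion applied to the open $n$-manifold $X\setminus\partial X$ gives that $(X\setminus\partial X)\setminus A$ is connected, and since $A$ is nowhere dense this set is dense in $X\setminus A$, whence $X\setminus A$ is connected as well. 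Thus no closed subset of dimension $\le 1$ separates $X$, and Theorem~\ref{t:sep} yields the assertion.

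The only non-routine ingredient is the fact quoted above, that closed sets of dimension $\le n-2$ do not disconnect a connected $n$-manifold. In the smooth or piecewise linear category this is an immediate consequence of transversality or general position; for an arbitrary topological manifold one has to fall back on the dimension-theoretic statement for $\mathbb R^n$ and glue the local pictures together along a chain of charts, the mild subtlety being to select the points $z_i$ in the overlaps off $A$. Everything else --- local connectedness, density of the manifold interior, and the fact (used implicitly by Theorem~\ref{t:sep}) that a compact $n$-manifold with $n\ge 3$ has no free arcs, so that $2^X$ and $C(X)$ are Hilbert cubes --- is standard.
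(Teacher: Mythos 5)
Your proposal is correct and follows essentially the same route as the paper: the corollary is an immediate application of Theorem~\ref{t:sep}, the only substantive hypothesis being that no closed set of dimension $\le 1$ separates a connected $n$-manifold, which the paper simply quotes as the well-known fact that such manifolds are Cantor $n$-manifolds. Your chart-chaining argument is just a self-contained proof of that standard fact, and the remaining verifications (local connectedness, density of open Euclidean charts, absence of free arcs) match what the paper takes as routine.
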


Denote by $\mathcal N(X)$  the family of all nowhere dense closed subsets of $X$.
The following proposition is well known and has a straightforward proof.
\begin{proposition}\label{p:nd}
For any compact space $X$, the subspace of  $2^X$ consisting of all closed subsets of $X$ with non-empty interiors is an $F_\sigma$-set.
\end{proposition}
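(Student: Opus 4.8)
The plan is to exhibit the family as a countable union of closed subsets of $2^X$, using a countable base of $X$. Fix a countable base $\{U_n : n\in\mathbb N\}$ consisting of \emph{non-empty} open subsets of $X$, which exists since a compact metric space is second countable. The elementary observation behind the argument is that for $A\in 2^X$ we have $\inte A\neq\emptyset$ precisely when $U_n\subseteq A$ for some $n$, and --- as $A$ is closed --- this in turn is equivalent to $\overline{U_n}\subseteq A$ for some $n$. Consequently
\[
\{A\in 2^X : \inte A\neq\emptyset\}=\bigcup_{n\in\mathbb N}\mathcal F_n,\qquad \mathcal F_n=\{A\in 2^X : \overline{U_n}\subseteq A\}.
\]

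The one thing that still needs an argument is that each $\mathcal F_n$ is closed in $2^X$. The quickest route: for a fixed point $x\in X$ the evaluation $A\mapsto dist(x,A)$ is continuous on $2^X$ (it is even $1$-Lipschitz with respect to the Hausdorff metric), hence $\{A\in 2^X : x\in A\}=\{A : dist(x,A)=0\}$ is closed, and $\mathcal F_n=\bigcap_{x\in\overline{U_n}}\{A : x\in A\}$ is an intersection of closed sets. Equivalently, the map $A\mapsto\sup_{x\in\overline{U_n}}dist(x,A)$ is continuous and $\mathcal F_n$ is its zero set. Either way the displayed union is $F_\sigma$, and dually $\mathcal N(X)$ is a $G_\delta$-subset of $2^X$.

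There is no genuine obstacle; the proof is entirely routine. The only place calling for a tiny bit of care is insisting that the $U_n$ be non-empty, so that the empty set is not inadvertently admitted into the union, which would otherwise collapse it to all of $2^X$.
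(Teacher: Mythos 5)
Your proof is correct: the decomposition into the closed sets $\mathcal F_n=\{A\in 2^X:\overline{U_n}\subseteq A\}$ over a countable base of non-empty open sets, together with the $1$-Lipschitz continuity of $A\mapsto dist(x,A)$, is exactly the routine argument the paper has in mind, since the paper states the proposition without proof, calling it ``well known'' with ``a straightforward proof.'' Nothing is missing, and your remarks on the non-emptiness of the $U_n$ and the dual $G_\delta$ statement for $\mathcal N(X)$ are accurate.
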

The next fact follows from Propositions~\ref{p:sep} and~\ref{p:nd}.
\begin{proposition}\label{p:ndsep}
If $X$ is a locally connected continuum, then
$$\mathcal S(X)\cap \mathcal N(X)\in D_2(F_\sigma) \quad\text{and}\quad  \mathcal S(X)\cap \mathcal N(X)\cap C(X)\in D_2(F_\sigma).$$
\end{proposition}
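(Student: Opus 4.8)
The plan is to combine the two preceding propositions with the elementary observation that a set belongs to $D_2(F_\sigma)$ precisely when it has the form $P\cap Q$ with $P$ an $F_\sigma$-set and $Q$ a $G_\delta$-set. Indeed, if $A=F_1\setminus F_2$ with $F_1,F_2$ both $F_\sigma$ in an ambient space $Z$, then $A=F_1\cap(Z\setminus F_2)$ is the intersection of an $F_\sigma$-set with a $G_\delta$-set; conversely, given $A=P\cap Q$ as above, the set $P\cap(Z\setminus Q)$ is an intersection of two $F_\sigma$-sets, hence $F_\sigma$ in a metric space, it is contained in $P$, and $A=P\setminus\bigl(P\cap(Z\setminus Q)\bigr)$ is a difference of two $F_\sigma$-sets.

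First I would record that, for a \emph{closed} subset $C$ of $X$, being nowhere dense is the same as having empty interior in $X$; hence, within the hyperspace $2^X$, the complement of $\mathcal N(X)$ is exactly the family of closed subsets of $X$ with non-empty interior. By Proposition~\ref{p:nd} this complement is an $F_\sigma$-subset of $2^X$, so $\mathcal N(X)$ is a $G_\delta$-subset of $2^X$. By Proposition~\ref{p:sep}, $\mathcal S(X)$ is an $F_\sigma$-subset of $2^X$. Therefore $\mathcal S(X)\cap\mathcal N(X)$ is the intersection of an $F_\sigma$-set and a $G_\delta$-set in $2^X$, so by the observation above it lies in $D_2(F_\sigma)$.

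For the continuum version I would simply restrict everything to the subspace $C(X)$, which is closed in $2^X$. The trace of an $F_\sigma$- (resp. $G_\delta$-) subset of $2^X$ on $C(X)$ is an $F_\sigma$- (resp. $G_\delta$-) subset of $C(X)$, so $\mathcal S(X)\cap C(X)$ is $F_\sigma$ in $C(X)$ and $\mathcal N(X)\cap C(X)$ is $G_\delta$ in $C(X)$; their intersection $\mathcal S(X)\cap\mathcal N(X)\cap C(X)$ is again of the form $F_\sigma\cap G_\delta$, hence in $D_2(F_\sigma)$.

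There is essentially no obstacle here: the only point requiring (a trivial) verification is that for closed sets ``nowhere dense'' coincides with ``empty interior,'' which is what lets us read Proposition~\ref{p:nd} as the statement that $\mathcal N(X)$ is $G_\delta$; everything else is bookkeeping with Borel classes.
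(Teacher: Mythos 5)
Your proof is correct and is essentially the paper's argument: the paper simply states that the proposition follows from Propositions~\ref{p:sep} and~\ref{p:nd}, the point being that $\mathcal S(X)\cap\mathcal N(X)=\mathcal S(X)\setminus\{F\in 2^X:\inte F\neq\emptyset\}$ is a difference of two $F_\sigma$-sets (and likewise after intersecting with the closed subspace $C(X)$). Your detour through the equivalence of $D_2(F_\sigma)$ with sets of the form $F_\sigma\cap G_\delta$ is a correct, if slightly longer, way of saying the same thing.
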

\begin{proposition}\label{p1:ndsep}
If a space $X$ contains an open subset homeomorphic to the combinatorial interior $\inte (I^n) =(0,1)^n$ of $I^n$, $2\le n<\infty$, then  $\mathcal S(X)\cap \mathcal N(X)\cap C(X)$ is $D_2(F_\sigma)$-universal.
\end{proposition}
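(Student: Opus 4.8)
The plan is to realise the standard $D_2(F_\sigma)$-absorber $B(I^\omega)\times s$ of $I^\omega\times I^\omega$ (Example~\ref{ex4}) inside $C(X)$. As that set is in particular $D_2(F_\sigma)$-universal, it suffices --- after identifying $I^\omega\times I^\omega$ with $I^\omega$ --- to produce a single embedding $\Xi:I^\omega\times I^\omega\to C(X)$ with $\Xi^{-1}\bigl(\mathcal S(X)\cap\mathcal N(X)\cap C(X)\bigr)=B(I^\omega)\times s$; precomposing it with an embedding that reduces a given $M\in D_2(F_\sigma)$ to $B(I^\omega)\times s$ then yields the required reduction of $M$. I would fix a copy of $I^n$ in $X$ with $(0,1)^n$ open in $X$, two disjoint closed $n$-cells $C_1,C_2\subset(0,1)^n$, an arc $\alpha\subset(0,1)^n$ from a point $a_1\in\inte C_1$ to a point $a_2\in\inte C_2$ that otherwise misses $C_1\cup C_2$, and flat embeddings $\kappa_j:I^n\hookrightarrow\inte C_j$, then put
\begin{equation*}
\Xi(p,q)=\kappa_1(\Psi(p))\ \cup\ \alpha\ \cup\ \kappa_2(\Theta(q)).
\end{equation*}
Here $\Psi:I^\omega\to C(I^n)$ is the separator embedding of Proposition~\ref{p1:sep}; since $\Psi(p)\supset I\times\{(\tfrac12,\dots,\tfrac12)\}$ we may take $a_1=\kappa_1(\tfrac12,\dots,\tfrac12)\in\kappa_1(\Psi(p))$ for all $p$. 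And $\Theta:I^\omega\to C(I^n)$ is an auxiliary embedding, described below, with $a_2\in\kappa_2(\Theta(q))$ for all $q$. Because the three summands sit in recognisably disjoint parts of $X$ and $\Psi,\Theta$ are injective, $\Xi$ is a continuous injection of a compactum, hence an embedding, and each $\Xi(p,q)$ is a nondegenerate continuum.

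The crux is to construct $\Theta$ so that (i) $\Theta(q)$ has non-empty interior in $I^n$ exactly when $q\in B(I^\omega)$, (ii) $\Theta(q)$ is $1$-dimensional whenever $q\in s$, and (iii) $X\setminus\kappa_2(\Theta(q))$ is connected for every $q$, so that $\kappa_2(\Theta(q))$ never separates $X$. I would do this by an ``octopus'': fix $z_0\in\inte I^n$, a null sequence of pairwise disjoint round balls $E_i\subset\inte I^n$ with $E_i\to\{z_0\}$, and for each $i$ an ``arm'' --- a segment joining $z_0$ to a point of $\partial E_i$ that depends continuously and injectively on $q_i$. For $q_i\in(0,1)$ hang on the $i$-th arm a finite fan of segments inside $E_i$ whose Hausdorff distance to $E_i$ tends to $0$ as $q_i\to 0$ and as $q_i\to 1$; for $q_i\in\{0,1\}$ hang all of $E_i$. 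Take $\Theta(q)$ to be the union of $z_0$, the arms and these attached pieces, and set $a_2=\kappa_2(z_0)$. Then $\Theta$ is a continuous embedding; $\Theta(q)$ is a dendrite with finitely many round balls attached, hence has connected complement in any $n$-cell carrying it in its interior, giving (iii); $\Theta(q)$ is $1$-dimensional unless some $E_i$ is swallowed whole, giving (ii); and it has interior exactly when some $q_i\in\{0,1\}$, giving (i). (One could instead take the $F_\sigma$-absorber of continua with non-empty interior from Example~\ref{ex1}(3) and modify it into this star-shaped, non-separating shape, but a direct construction seems cleaner.)

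It then remains to compute $\Xi^{-1}\bigl(\mathcal S(X)\cap\mathcal N(X)\cap C(X)\bigr)$. For nowhere density, a finite union of closed sets is nowhere dense iff each summand is; $\kappa_1(\Psi(p))$ is always nowhere dense (built in Proposition~\ref{p1:sep} from arcs, finite graphs and boundaries of $n$-cubes, it contains no $n$-cube) and $\alpha$ is an arc, so $\Xi(p,q)\in\mathcal N(X)$ iff $\kappa_2(\Theta(q))$ is nowhere dense, i.e.\ iff $q\in s$ by (i) and (ii). For separation, $\Xi(p,q)$ separates $X$ iff $\kappa_1(\Psi(p))$ does, i.e.\ iff $p\in B(I^\omega)$: if some $p_i\in\{0,1\}$ then $\kappa_1(\partial\Delta_i)\subset\kappa_1(\Psi(p))$, and the non-empty open sets $\kappa_1(\inte\Delta_i)$ (disjoint from $\Xi(p,q)$, as one checks from the formula~\eqref{eq:w} for $\Psi$) and $X\setminus(\kappa_1(\Delta_i)\cup\Xi(p,q))$ (non-empty, as it contains $\partial I^n$) partition $X\setminus\Xi(p,q)$; if $p\in s$ then $\kappa_1(\Psi(p))$ is $1$-dimensional, $\kappa_1(\Psi(p))\cup\alpha$ is at most $1$-dimensional and --- as in Proposition~\ref{p1:sep} --- does not separate $X$ for $n\ge 3$, while $X\setminus\kappa_2(\Theta(q))$ is connected by (iii) and remains connected after deleting the at-most-$1$-dimensional set $\kappa_1(\Psi(p))\cup\alpha$, which lies in $C_1\cup\alpha$ away from $\kappa_2(\Theta(q))$. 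Hence $\Xi(p,q)\in\mathcal S(X)\cap\mathcal N(X)\cap C(X)$ iff $p\in B(I^\omega)$ and $q\in s$, i.e.\ iff $(p,q)\in B(I^\omega)\times s$. For $n=2$ one replaces $\Psi$ by the embedding~\eqref{Psi2}; the octopus $\Theta$ is still non-separating and the same bookkeeping goes through.

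I expect the main obstacle to be requirements (ii) and (iii) on $\Theta$ together: ``never separates $X$'' forces the star-shaped/octopus shape, yet ``$1$-dimensional off $B(I^\omega)$'' forbids the naive device of using denser and denser fans (whose Hausdorff limits would already acquire interior), so the fans must be kept finite and their convergence to $E_i$ arranged by hand. The remaining delicate point is the separation bookkeeping for $\Xi$ in the case $\Psi(p)$ does not separate, which relies both on the particular ``encloses a tiny cell'' structure of $\Psi$ and on the fact, used already in Proposition~\ref{p1:sep}, that a $1$-dimensional subset lying in the distinguished copy of $(0,1)^n$ cannot separate $X$ once $n\ge 3$.
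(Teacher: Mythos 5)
Your proposal is essentially the paper's own argument: the paper likewise realises $B(I^\omega)\times s$ inside $C(X)$ by a two-part continuum placed in the distinguished cell, using $\Psi$ for the separation coordinate and, in place of your octopus $\Theta$, the map $\Psi_0$ obtained from $\Psi$ by replacing the boundaries $\partial(\Delta_i)$ with the solid cubes $\Delta_i$, glued to $\Psi$ by a reflection rather than a connecting arc, and then composing with a reduction of $M$ to $B(I^\omega)\times s$. The only spot where your write-up is thinner than it should be is $n=2$: there non-separation for $p\in s$ cannot be inferred from $1$-dimensionality, so besides switching to the embedding~\eqref{Psi2} you must still check that the joining arc and the octopus close no region in the plane (the paper's $n=2$ constructions are deliberately loop-free for exactly this reason) --- a routine but necessary verification.
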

\begin{proof}
Assume again that $I^n\subset X$ and $(0,1)^n$ is an open subset of $X$.  First, replace in the definition~\eqref{eq:w} of embedding $\Psi$ the boundaries $\partial(\Delta_i)$ with the cubes $\Delta_i$ themselves for all $i$'s and denote thus obtained embedding by $\Psi_0$. We have
\begin{equation}\label{e:phi}
\Psi_0:I^\omega\to C(I^n)\quad\text{and}\quad \Psi_0^{-1}\bigl(\mathcal N(X)\bigr)=s,
\end{equation}
where $s$ is the pseudo-interior of $I^\omega$.

Let $\alpha:I^n \to [-1,0]\times I^{n-1}$ be the reflection
\begin{equation}\label{alpha}
\alpha(x_1,x_2,\dots, x_n)=(-x_1,x_2,\dots, x_n).
\end{equation}
Fix an embedding $f:[-1,1]\times I^{n-1}\to \inte(I^n)\subset X$.
Define a continuous embedding
\begin{equation}\label{Phi}
\Phi: I^\omega\times I^\omega \to C(X), \quad  \Phi((q_i), (t_i)\bigr)=f\bigl(\Psi\bigl((q_i)\bigr)\, \cup\,  \alpha\bigl(\Psi_0\bigl((t_i)\bigr)\bigr)\bigr).
\end{equation}
For $n\ge 3$, it follows from~\eqref{e:psi} and~\eqref{e:phi} that
\begin{multline}\label{Phireduction}
\Phi((q_i), (t_i)\bigr) \in  \mathcal S(X)\cap \mathcal N(X)\cap C(X) \quad\text{if and only if}\\
 ((q_i), (t_i)\bigr) \in B(I^\omega)\times s.
 \end{multline}
 In case $n=2$ we define  map $\Phi$ by  formula~\eqref{Phi} in which $\Psi$ is the map from~\eqref{Psi2} and $\phi$ is defined in the following way.

 Let $f_i:2^{J_i}\times I\to 2^{J_i}$ be a deformation through finite sets. Denote
 \begin{multline*}
 W_i(t)=f_i(J_i,t)\times \left[\frac12,\frac12+\frac1{4i(i+1)}\right], \\
 V_{2i-1}= W_i\bigl((q_i\bigr)), \quad V_{2i}= W_{i}\bigl((1-q_i\bigr))
 \end{multline*}
 for $(q_i)\in I^\omega$.
 Observe that $W_i(0)=\Delta_i$. Now, the map
 $$\phi\bigl((q_i\bigr))= I\times \left\{\frac12\right\}\,\cup\,\bigcup_i\, V_i$$
 is an embedding which satisfies~\eqref{Phireduction} (Figure~\ref{fig3}).
 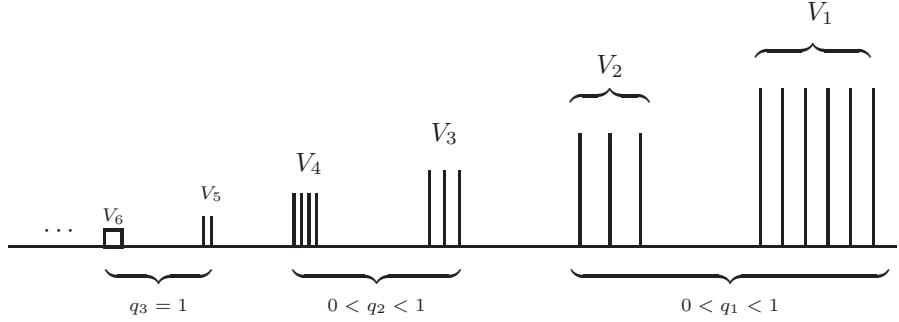
\begin{figure}[h]
\setlength{\unitlength}{1mm}
\begin{picture}(135,30)
\thicklines
\put(0,0){\line(1,0){118}}


\put(108,31){\makebox(0,0){$V_1$}}

\put(107,26){\makebox(0,0){$\overbrace{\ \ \ \ \ \ \ \ \ \ \ \ \  }$}}
\put(96,-4){\makebox(0,0){$\underbrace{\ \ \ \ \ \ \ \ \ \ \ \ \ \ \ \ \ \ \ \ \ \ \ \ \ \ \ \ \ \ \ \ \ \ \ \  }$}}
\put(49,-4){\makebox(0,0){$\underbrace{\ \ \ \ \ \ \ \ \ \ \ \ \ \ \ \ \ \ \   }$}}

\put(96,-8){\makebox(0,0){\scriptsize$0<q_1<1$}}

\multiput(100,0)(3,0){6}{\line(0,1){21}}

\put(80,24){\makebox(0,0){$V_2$}}

\put(80,20){\makebox(0,0){$\overbrace{\ \ \ \ \ \ \ \ \    }$}}

\multiput(76,0)(4,0){3}{\line(0,1){15}}

\multiput(56,0)(2,0){3}{\line(0,1){10}}

\put(58,15){\makebox(0,0){$V_3$}}

\put(49,-8){\makebox(0,0){\scriptsize$0<q_2<1$}}

\multiput(38,0)(1,0){4}{\line(0,1){7}}

\put(40,11){\makebox(0,0){$V_4$}}

\multiput(26,0)(1,0){2}{\line(0,1){4}}

\put(13,0){\framebox(2,2)}

\put(20,-8){\makebox(0,0){\scriptsize$q_3=1$}}
\put(27,7){\makebox(0,0){\scriptsize$V_5$}}
\put(20,-4){\makebox(0,0){$\underbrace{\ \ \ \ \ \ \ \ \ \ \ \   }$}}

\put(14,4){\makebox(0,0){\scriptsize$V_6$}}
\put(7,2){\makebox(0,0){$\cdots$}}
\end{picture}
\vspace{.5cm}
\caption{$\phi\bigl((q_1,q_2,1,\dots)\bigr)$ in $I^2$}\label{fig3}
\end{figure}

Since $B(I^\omega)\times s$ is strongly $D_2(F_\sigma)$-universal (see Example~\ref{ex4}), the proof is complete.
\end{proof}

\begin{theorem}\label{t:ndsep}
Assume $X$ satisfies hypotheses of Theorem~\ref{t:sep}. Then  $\mathcal S(X)\cap \mathcal N(X)$ is an $D_2(F_\sigma)$-absorber in $2^{X}$ and
 $\mathcal S(X)\cap \mathcal N(X)\cap C(X)$ is an $D_2(F_\sigma)$-absorber in $C(X)$ for $n\ge3$.
\end{theorem}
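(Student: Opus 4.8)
The plan is to verify the three defining conditions of a $D_2(F_\sigma)$-absorber for each of $\mathcal S(X)\cap\mathcal N(X)\subset 2^X$ and $\mathcal S(X)\cap\mathcal N(X)\cap C(X)\subset C(X)$, leaning on Theorem~\ref{t:sep} and on the reduction built in Proposition~\ref{p1:ndsep}. Condition~(1), membership in $D_2(F_\sigma)$, is Proposition~\ref{p:ndsep}. Condition~(2), containment in a $\sigma Z$-set, is for free: by Theorem~\ref{t:sep} the families $\mathcal S(X)$ and $\mathcal S(X)\cap C(X)$ are $F_\sigma$-absorbers in $2^X$ and $C(X)$, so each lies in a $\sigma Z$-set of the relevant Hilbert cube, and $\mathcal S(X)\cap\mathcal N(X)\subset\mathcal S(X)$ while $\mathcal S(X)\cap\mathcal N(X)\cap C(X)\subset\mathcal S(X)\cap C(X)$. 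Thus the real work is condition~(3), strong $D_2(F_\sigma)$-universality, and here I would rerun Approach~II~\ref{ap2} exactly as in the proof of Theorem~\ref{t:sep}.

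The building blocks for Approach~II come from Proposition~\ref{p1:ndsep}. Given $M\subset I^\omega$ in $D_2(F_\sigma)$, use strong $D_2(F_\sigma)$-universality of $B(I^\omega)\times s$ to fix $\rho\colon I^\omega\to I^\omega\times I^\omega$ with $\rho^{-1}(B(I^\omega)\times s)=M$, and put $\varphi=\Phi\circ\rho$ with $\Phi$ as in~\eqref{Phi}; by~\eqref{Phireduction}, $\varphi^{-1}(\mathcal S(X)\cap\mathcal N(X)\cap C(X))=M$. Since $\Phi$ is constructed inside a closed $n$-cell lying in a copy of $(0,1)^n$, for each non-empty open $U\subset X$ and each open copy of $(0,1)^n$ in $U$ this furnishes a continuous $\varphi_U\colon I^\omega\to C(U)$ --- the composition of $\varphi$ with the induced embedding of hyperspaces --- with $\varphi_U^{-1}(\mathcal S(X)\cap\mathcal N(X)\cap C(X))=M$; and since $\varphi_U(I^\omega)\subset C(X)$, also $\varphi_U^{-1}(\mathcal S(X)\cap\mathcal N(X))=M$, which is what Approach~II needs in the $2^X$ case too. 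Here I use that a closed subset of such a cell separates $X$, resp.\ is nowhere dense in $X$, precisely when it does so relative to the cell: the separation half is the observation, already exploited in Propositions~\ref{p1:sep} and~\ref{p1:ndsep}, that an $(n-1)$-sphere bounding a small ball of an open Euclidean chart separates $X$, and that adjoining a lower-dimensional set inside such a chart (in particular the reflected $\Psi_0$-part) changes nothing for $n\ge 3$; the nowhere-density half holds because the interior in $X$ of a subset of the cell is its interior relative to the cell.

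Finally I would verify~\eqref{eq:uni} for the $Z$-embedding $g$ produced by Approach~II, with $\mathcal A=\mathcal S(X)\cap\mathcal N(X)$ in $2^X$ and $\mathcal A=\mathcal S(X)\cap\mathcal N(X)\cap C(X)$ in $C(X)$; the two cases run identically, the only difference being that $A(q)$ is a $(\le 1)$-dimensional compactum in the former and a $1$-dimensional Peano continuum in the latter. For $q\notin K$ one has $g(q)=A(q)\cup\bigcup_{U\in S(q)}\overline{\varphi}_U(q)$ with the sets $U\in S(q)$ pairwise disjoint, $A(q)$ at most $1$-dimensional, each $\overline{\varphi}_U(q)$ either $\varphi_U(q)$ or a finite graph, and $\overline{\varphi}_{U_0}(q)=\varphi_{U_0}(q)$ for at least one $U_0$. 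Because every non-empty open subset of $X$ contains a copy of $(0,1)^n$ with $n\ge 3$, $X$ has no open subset of dimension $\le 1$; hence $A(q)$ and all the finite graphs $\overline{\varphi}_U(q)$ ($U\neq U_0$) are nowhere dense in $X$, and since all the full copies $\varphi_U(q)$ occurring in $g(q)$ are images of the same $\varphi$, $g(q)$ is nowhere dense in $X$ iff $\varphi_{U_0}(q)$ is. Similarly, as in the proof of Theorem~\ref{t:sep}, the at most $1$-dimensional pieces do not affect separation of $X$ for $n\ge 3$, so $g(q)$ separates $X$ iff $\varphi_{U_0}(q)$ does. Combining, $g(q)\in\mathcal A$ iff $\varphi_{U_0}(q)\in\mathcal A$ iff $q\in M$, which is~\eqref{eq:uni}. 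The one point genuinely requiring care --- the main obstacle --- is exactly this last step, that gluing the one-dimensional skeleton $A(q)$ and the auxiliary graphs onto $\varphi_{U_0}(q)$ alters neither ``separates $X$'' nor ``nowhere dense in $X$''; the second half is immediate from the local Euclidean structure, while the first is the argument already isolated in Theorem~\ref{t:sep}.
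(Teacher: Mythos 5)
Your proposal is correct and follows essentially the same route as the paper: conditions (1) and (2) via Proposition~\ref{p:ndsep} and the containments in $\mathcal S(X)$, $\mathcal S(X)\cap C(X)$ (which are $\sigma Z$-sets by Theorem~\ref{t:sep}), and strong $D_2(F_\sigma)$-universality via Approach~II with $\varphi=\Phi\circ\rho$ built from the reduction to $B(I^\omega)\times s$ of Proposition~\ref{p1:ndsep}, concluding with the observation that the at most one-dimensional parts of $g(q)$ affect neither separation of $X$ nor nowhere-density for $n\ge 3$. Your extra remarks (handling the $2^X$ case through $\varphi_U(I^\omega)\subset C(X)$ and noting that all full copies come from the same $\varphi$) only make explicit what the paper leaves implicit.
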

\begin{proof}
Since
$$\mathcal S(X)\cap \mathcal N(X)\subset \mathcal S(X), \quad \mathcal S(X)\cap \mathcal N(X)\cap C(X)\subset\mathcal S(X) \cap C(X)$$
 and $\mathcal S(X)$ and $\mathcal S(X) \cap C(X)$ are $\sigma Z$-sets in $2^{X}$ and $C(X)$, respectively (by Theorem~\ref{t:sep}),  it remains to show the strong $D_2(F_\sigma)$-universality of  both families. There exists, for each subset $M\subset I^\omega$ from the class $D_2(F_\sigma)$, a mapping
$$\zeta:I^\omega \to I^\omega \times I^\omega \quad\text{such that}\quad \zeta^{-1}(B(I^\omega)\times s)=M.$$

The  composition
$\varphi= \Phi\zeta$, where $\Phi$ is the mapping~\eqref{Phi},
provides a map $I^\omega \to C(I^n)\subset C(X)$ satisfying
$$\varphi^{-1}\bigl((\mathcal S(X)\cap \mathcal N(X)\cap C(X)\bigr)=M$$ and mappings  $\varphi_U:I^\omega \to C(U)$, for every open non-empty subset $U$ of $X$ and an open copy of $(0,1)^n$ in $U$, which are compositions of $\varphi$ with an embedding of $C(I^n)$ into the hyperspace of that copy.
Now, the construction of the embedding $g$ in Approach~\ref{ap2} works for both families, i.e., if  $q\notin K$,  then
\begin{multline*}
g\bigl(q\bigr)\in \mathcal S(X)\cap \mathcal N(X)\cap C(X) \quad\text{if and only if}\\
 \varphi\bigl(q\bigr) \in \mathcal S(I^n)\cap \mathcal N(I^n)\cap C(I^n)
 \end{multline*}
 and similarly for $\mathcal S(X)\cap \mathcal N(X)$, because neither the zero- nor one-dimensional part $A(q)$ of $g\bigl(q\bigr)$ destroy the separation properties of  copies $\varphi_U(q)$ in $g\bigl(q\bigr)$ or change their status of being nowhere dense in $X$  for $n\ge 3$.

\end{proof}

\begin{corollary}
If $X$ is a continuum which is an $n$-manifold (with or without boundary), $3\le n <\infty$, then the family  $\mathcal S(X)_{n-1}$ of all $(n-1)$-dimensional closed separators of $X$ is a $D_2(F_\sigma)$-absorber in $2^{X}$ and $\mathcal S(X)_{n-1}\cap C(X)$ is a $D_2(F_\sigma)$-absorber in $C(X)$.
\end{corollary}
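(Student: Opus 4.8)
The idea is to recognize $\mathcal S(X)_{n-1}$ as the family $\mathcal S(X)\cap\mathcal N(X)$ of nowhere dense closed separators of $X$ and then to quote Theorem~\ref{t:ndsep}. First I would check that an $n$-manifold continuum $X$ with $3\le n<\infty$ meets the hypotheses of Theorem~\ref{t:sep}. Such an $X$ is locally connected, being locally Euclidean. Its manifold interior is a dense open subset which is an $n$-manifold without boundary, so every non-empty open $U\subset X$ meets it and therefore contains a non-empty open set homeomorphic to $(0,1)^n\cong\mathbb R^n$. Finally, no subset of dimension $\le 1$ separates $X$: since $n\ge 3$ we have $1\le n-2$, and by the classical fact that a subset of dimension $\le n-2$ does not separate an $n$-manifold, no such subset can separate $X$.

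Next I would establish the equalities
\[
\mathcal S(X)_{n-1}=\mathcal S(X)\cap\mathcal N(X),\qquad \mathcal S(X)_{n-1}\cap C(X)=\mathcal S(X)\cap\mathcal N(X)\cap C(X).
\]
For the inclusion ``$\subset$'': if $S$ separates $X$ and $\dim S=n-1$, then $S$ cannot have non-empty interior, since every non-empty open subset of $X$ has dimension $n$; hence $S\in\mathcal N(X)$. For ``$\supset$'': if $S$ is a nowhere dense closed separator of $X$, then $\dim S\le n-1$, because a closed subset of the $n$-manifold $X$ with empty interior has dimension $\le n-1$ (a local statement, reducing to the corresponding fact for subsets of $\mathbb R^n$), while $\dim S\ge n-1$ by the separation fact cited above applied contrapositively; therefore $\dim S=n-1$ and $S\in\mathcal S(X)_{n-1}$. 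Intersecting with $C(X)$ gives the second equality.

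With these identifications the corollary is immediate: $X$ satisfies the hypotheses of Theorem~\ref{t:sep}, and hence of Theorem~\ref{t:ndsep}, with $3\le n<\infty$, so Theorem~\ref{t:ndsep} tells us that $\mathcal S(X)\cap\mathcal N(X)=\mathcal S(X)_{n-1}$ is a $D_2(F_\sigma)$-absorber in $2^X$ and that $\mathcal S(X)\cap\mathcal N(X)\cap C(X)=\mathcal S(X)_{n-1}\cap C(X)$ is a $D_2(F_\sigma)$-absorber in $C(X)$.

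The only genuinely non-formal ingredients are the two standard dimension-theoretic facts --- that a closed subset of an $n$-manifold with empty interior is at most $(n-1)$-dimensional, and that a subset of dimension $\le n-2$ fails to separate an $n$-manifold --- and the main care needed is to invoke both in the form valid also for manifolds with boundary (citing, e.g., Engelking's or Hurewicz--Wallman's dimension-theory monographs); everything else is a direct appeal to Theorem~\ref{t:ndsep}.
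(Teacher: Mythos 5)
Your proposal is correct and follows essentially the same route as the paper: the paper's proof also reduces the corollary to Theorem~\ref{t:ndsep} by noting that such an $X$ is a Cantor $n$-manifold (no subset of dimension $\le n-2$ separates it) and that $Y\in\mathcal N(X)$ iff $\dim Y\le n-1$, which together give $\mathcal S(X)\cap\mathcal N(X)=\mathcal S(X)_{n-1}$. Your explicit verification of the hypotheses of Theorem~\ref{t:sep} and of both inclusions just spells out what the paper treats as well known.
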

\begin{proof}
It is well known that each such $X$ is a Cantor $n$-manifold ($\equiv$ no subset of dimension $\le n-2$ separates $X$) and $Y\in \mathcal N(X)$ iff $\dim Y\le n-1$. This means that
$$\mathcal S(X)\cap \mathcal N(X) = \mathcal S(X)_{n-1}.$$
\end{proof}

\section{Infinite-dimensional compacta}\label{WID}
Recall that a space $X$ is \emph{strongly infinite-dimensional} if
there exists a sequence $(A_n, B_n)_n$ of closed disjoint subsets of $X$
such that for each  sequence $(C_n)_n$ of closed separators of $X$ between $A_n$ and $B_n$
we have $\bigcap_n C_n \neq\emptyset$.

A space is \emph{weakly infinite-dimensional} if it is not strongly infinite-dimensional.  The collection of all weakly infinite-dimen\-sio\-nal compacta in a space $Y$ will be denoted by $\mathcal W(Y)$.

\

It was proved in~\cite[Section 4, p. 173]{Pol1} that strongly infinite-dimen\-sional compacta form an analytic subset of $2^{I^\omega}$. Below, we provide a different and elementary proof of this fact for such compacta in an arbitrary locally connected compact space.

\begin{proposition}\label{p1}
The family of strongly infinite-dimensional compacta in a compact locally connected space $Y$ is an analytic subset of $2^{Y}$.
\end{proposition}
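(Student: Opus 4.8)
The plan is to express membership in the family $\mathcal{SID}(Y)$ of strongly infinite-dimensional compacta using the definition directly, quantifying over the witnessing sequence of pairs $(A_n,B_n)$, and then show that the ``inner'' condition (that every choice of separators between the $A_n$ and $B_n$ has a point in common) is a Borel condition on the parameters. The existential quantifier over the witnessing data will be an existential quantifier over a Polish space, which gives analyticity.

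First I would fix a countable basis $\{U_k\}$ of $Y$ and observe that, since $Y$ is compact, the hyperspace $2^Y$ is a compact metric space; the space of all sequences $\bigl((A_n,B_n)\bigr)_n$ with $A_n,B_n\in 2^Y$ and $A_n\cap B_n=\emptyset$ is a (not closed, but) Polish subspace $P$ of $(2^Y\times 2^Y)^\omega$ — disjointness of two compacta is a $G_\delta$ condition. Then $K\in\mathcal{SID}(Y)$ iff
\begin{equation*}
\exists\,\bigl((A_n,B_n)\bigr)_n\in P\quad\bigl(\forall n\ A_n\cup B_n\subset K\bigr)\ \wedge\ R(K,(A_n,B_n)_n),
\end{equation*}
where $R(K,(A_n,B_n)_n)$ is the statement ``for every sequence $(C_n)_n$ of closed separators of $K$ between $A_n$ and $B_n$, $\bigcap_n C_n\neq\emptyset$''. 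The condition $A_n\cup B_n\subset K$ is closed in the relevant variables, so the projection theorem reduces everything to showing that $R$, as a subset of $2^Y\times P$, is analytic (in fact, I expect it to be coanalytic-looking at first glance, which is the point of concern — see below).

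The key simplification is Observation~\ref{o1} together with the fact that in a compact locally connected $Y$ a closed $C\subset K$ separates $K$ between $A_n$ and $B_n$ iff a closed separator $C'$ in $Y$ cuts $Y$ between them; more usefully, I would rephrase $R$ in terms of \emph{cuts}. Using that $K$ is locally connected is not available ($K$ need not be), so instead I would use the ambient local connectedness of $Y$: pick a countable dense set $\mathcal E$ of triples $(D,x,y)$ with $D\in C(Y)$, $x\neq y$, $x,y\in D$, exactly as in the proof of Proposition~\ref{p:sep}. The crucial reformulation is the ``essential family'' criterion: the sequence $(A_n,B_n)_n$ witnesses strong infinite-dimensionality of $K$ iff it is an \emph{essential family in $K$}, and a standard compactness argument (Eilenberg–Otto / the proof that strong infinite-dimensionality is equivalent to the existence of an essential sequence) shows that $R(K,(A_n,B_n)_n)$ holds iff
\begin{equation*}
\forall m\ \exists\ \text{a ``partition linkage'' of length }m\text{ inside }K,
\end{equation*}
a condition that only ever quantifies over finitely many closed subsets of $Y$ chosen from a countable dense family and over finitely many of the basic open sets $U_k$. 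Concretely, $R$ becomes a countable intersection over $m$ of conditions each of which is a countable union (over finite tuples of basic data) of closed conditions on $(K,(A_n,B_n)_n)$, hence $R$ is Borel — at worst $F_{\sigma\delta}$. Combined with the single outer existential quantifier over the Polish space $P$, this yields that $\mathcal{SID}(Y)$ is analytic in $2^Y$.

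The main obstacle I anticipate is precisely the reduction of the ``for all sequences of separators'' clause to a countable/finitary condition: done naively, $R$ carries a universal quantifier over the Polish space of sequences $(C_n)_n$ and would only give coanalyticity of $R$, hence analyticity of the \emph{complement} of $\mathcal{SID}(Y)$ (the weakly infinite-dimensional compacta), which is not what we want. The fix is to invoke the finite ``essential family in $K$'' characterization — essentiality is witnessed level by level, and at each finite level $m$ one may, by compactness and local connectedness of the ambient $Y$, encode ``there is no way to shrink the first $m$ separators to be disjoint'' using only finitely many continua and open sets drawn from fixed countable families. Verifying that this finitary encoding is faithful (that it really captures $R$, not something weaker or stronger) is the one place where I would spend genuine care; the rest — disjointness being $G_\delta$, inclusion being closed, the single projection giving analyticity — is routine. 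I would also note for completeness that this same argument, run with the roles of separator-existence flipped, recovers Pol's result that $\mathcal W(Y)=2^Y\setminus\mathcal{SID}(Y)$ is coanalytic, which is the starting point for the $\Pi^1_1$-absorber statements later in Section~\ref{WID}.
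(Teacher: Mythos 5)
Your overall skeleton is the same as the paper's: pass from separators of the compactum to cuts of the ambient locally connected $Y$ (Observation~\ref{o1}), quantify existentially over the witnessing sequence $(A_n,B_n)_n$ in a Polish subspace of $(2^Y\times 2^Y)^\omega$, note that the naive inner clause carries a universal quantifier over a Polish space of separator sequences (so the raw formula only gives a projection of a coanalytic set, i.e.\ $\Sigma^1_2$), and repair this by making the inner clause Borel via quantification over countable data, so that one projection yields analyticity. The gap is that the repair --- which you correctly identify as the only non-routine step --- is never carried out, and the device you propose for it is doubtful. In the paper this step is exactly Claim~\ref{c:count cut}: fix a countable dense subset $\mathcal E$ of the space of triples $(C,A,B)$ with $C$ a cut of $Y$ between $A$ and $B$, and prove that it suffices to test all finite tuples $(C_1,\dots,C_k)$ drawn from $\mathcal E_1=proj_1(\mathcal E)$. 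The proof approximates arbitrary cut-triples $(C_i,A_i,B_i)$ by members of $\mathcal E$, uses local connectedness of $Y$ to ensure that a sufficiently close $C_i'$ still cuts $Y$ between the original $A_i$ and $B_i$, and uses compactness to pass from points $x_k\in X\cap\bigcap_{i\le k}C_i'$ to a point of $X\cap\bigcap_n C_n$. Note that after this reduction the inner condition remains \emph{universal}, but over the countable set $(\mathcal E_1)^k$, with a $G_\delta$ ``cuts'' predicate inside; that is all one needs, since any Borel bound suffices before projecting.

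By contrast, you assert that level-$m$ essentiality of $(A_1,B_1),\dots,(A_m,B_m)$ in $K$ can be recast as an existential statement (``there exists a partition linkage'') over finitely many sets drawn from countable families, each level being a countable union of closed conditions. Essentiality at a fixed finite level is intrinsically a universal statement over partitions, and neither the Eilenberg--Otto theorem nor the essential-map characterizations provide an existential witness of the kind you describe; what does admit a finitary encoding (via finite unions of basic open sets, by compactness) is the \emph{negation} --- the existence of partitions with empty intersection --- which would make each level a countable intersection of Borel conditions rather than an $F_\sigma$ set. That discrepancy is harmless for the final complexity count, but it shows the claimed structure has not actually been verified, and verifying some faithful countable encoding is the whole content of the proposition. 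Relatedly, the countable dense family you invoke, the triples $(D,x,y)$ from Proposition~\ref{p:sep}, is not the relevant one here: what is needed is a dense family of cut-triples $(C,A,B)$ as in~\eqref{e:count cut} (or an equivalent basic-open-set encoding of partitions of $K$), together with the approximation argument that makes testing against it legitimate --- and it is precisely there that local connectedness of $Y$ and compactness are used. Until that reduction is supplied, your argument establishes only a $\Sigma^1_2$ bound, which is the very difficulty you set out to avoid.
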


\begin{proof}
In view of Observation~\ref{o1}, we have the following claim.
\begin{claim}\label{c:cut}
A compact space $X\subset Y$ is strongly infinite-dimensional if and only if
\begin{multline}\label{e:cut}
\exists (A_n,B_n)_n\in (2^Y\times 2^Y)^\omega\\
\forall n \ (A_n\subset X, B_n\subset X, A_n\cap B_n=\emptyset)\quad
\text{and}\quad \forall (C_n)\in (2^Y)^\omega\\
(\text{if}\quad \forall n \ (\text{$C_n$ cuts  $Y$ between $A_n$ and $B_n$),
then $X\cap \bigcap_n C_n\neq\emptyset$}).
\end{multline}
\end{claim}
A rough evaluation of the projective complexity of  formula~\eqref{e:cut} gives merely class $ \Pi^1_2$. Therefore we need to refine the cutting condition in~\eqref{e:cut}.

Let $\mathcal E$ be a countable dense subset of the family
$$\{(C,A,B)\in (2^Y)^3: A\cap B=\emptyset,\quad\text{$C$ cuts  $Y$ between $A$ and $B$}\}$$ and let $\mathcal E_1=proj_1(\mathcal E)$, where $proj_1$ is the projection of $(2^Y)^3$ onto the first factor-space.

\begin{claim}\label{c:count cut}
A compact space $X\subset Y$ is strongly infinite-dimensional if and only if
\begin{multline}\label{e:count cut} \exists (A_n,B_n)_n\in (2^Y\times 2^Y)^\omega\\
\forall n \ (A_n\subset X, B_n\subset X, A_n\cap B_n=\emptyset)\quad
\text{and}\quad
 \forall k \ \forall (C_1,\dots,C_k)\in (\mathcal E_1)^k \\
 (\text{if}\quad \forall i\le k \ (\text{$C_i$ cuts  $Y$ between $A_i$ and $B_i$),
then $X\cap \bigcap_{i\le k} C_i\neq\emptyset$}).
\end{multline}
\end{claim}
In order to show the less obvious implication $\Leftarrow$, assume that $C_n$ cuts   $Y$ between $A_n$ and $B_n$ for each $n\in\mathbb N$.
For each $k$, approximate $(C_i,A_i,B_i)$ by $(C'_i,A'_i,B'_i)\in\mathcal E$, $i\le k$. Then, by the local connectedness of $Y$, $C'_i$ cuts $Y$ between $A_i$ and $B_i$ if the approximation is sufficiently close. Then $X \cap \bigcap_{i\le k} C'_i\neq\emptyset$. So, there is a point
$x_k\in X \cap \bigcap_{i\le k} C'_i$ and the distances $d(x_k,C_1),\dots, d(x_k,C_k)$ can be made arbitrarily small. Let $x$ be an accumulation point of sequence $(x_k)$. Then $x\in X\cap \bigcap_{i\le k} C_i$ and, by the compactness of $Y$, we get $X\cap \bigcap_{n\in\mathbb N} C_n\neq\emptyset$ which completes the proof of Claim~\ref{c:count cut}.

\

We are now ready to evaluate the complexity of formula~\eqref{e:count cut}. First, write down the sentence
\begin{quote}
\emph{$C_i$ cuts  $Y$ between $A_i$ and $B_i$}
\end{quote}
as
\begin{multline}
C_i\cap A_i=\emptyset,\quad C_i\cap B_i=\emptyset \quad\text{and}\quad  \forall D\in C(Y)\\
(D\cap A_i \neq\emptyset, \quad D\cap B_i \neq\emptyset)\Rightarrow D\cap C_i\neq\emptyset
\end{multline}
and observe that its Borel complexity is $G_\delta$. Next, notice that all quantifiers in formula~\eqref{e:count cut}, preceding  the sentence, except for the first existential one, are taken over at most countable sets of variables, hence the  formula following the first existential quantifier describes a Borel set. Now, the whole formula~\eqref{e:count cut} gives an analytic set, as a continuous projection of a Borel set.

\end{proof}

\begin{proposition}\label{co1}
Let $Y$ be a compact, locally connected space containing a Hilbert cube. Then,
for each integer $n\ge 0$, the subsets $\mathcal W_n(Y)$ of $2^Y$ and $\mathcal W_n(Y)\cap C(Y)$ of $C(Y)$ consisting of weakly infinite-dimensional compacta and continua, respectively,  of dimensions $\ge n$ are $\Pi^1_1$-complete.
\end{proposition}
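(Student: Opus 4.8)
The plan is to get the $\Pi^1_1$ upper bound essentially for free from Proposition~\ref{p1} and to prove $\Pi^1_1$-hardness by a single continuous reduction of the canonical $\Pi^1_1$-complete set $WF$ of well-founded trees on $\mathbb N$ into both hyperspaces, the construction of that reduction being the substantive part. For the upper bound: by Proposition~\ref{p1} the strongly infinite-dimensional compacta in the compact locally connected space $Y$ form an analytic subset of $2^Y$, so its complement $\mathcal W(Y)$ is $\Pi^1_1$; since $\{K\in 2^Y:\dim K\ge n\}$ is a Borel subset of $2^Y$ (a classical fact about the covering dimension of compacta), $\mathcal W_n(Y)=\mathcal W(Y)\cap\{K:\dim K\ge n\}$ is $\Pi^1_1$ in $2^Y$, and, $C(Y)$ being closed in $2^Y$, the set $\mathcal W_n(Y)\cap C(Y)$ is $\Pi^1_1$ in $C(Y)$.

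For hardness I would use a ``tree of nested cubes''. Fix a Hilbert cube $Q\subset Y$ whose coordinates are indexed by $n$ distinguished symbols together with the countable set $\mathbb N^{<\omega}\times\mathbb N$; think of $(s,j)$ as the ``new direction'' born along the edge from $s$ to $s^\frown j$. Reading off a finite sequence $s$ the coherent chain $D(s)=\{(\emptyset,s(1)),((s(1)),s(2)),\dots\}$ of its $|s|$ directions, and choosing strictly positive lengths $\varepsilon_d$ (say $\varepsilon_{(t,j)}=2^{-p(t)-j}$, with $p(t)$ the sum of the entries of $t$ plus $4^{|t|}$), put
\[
Q_s=Q_\emptyset\times\prod_{d\in D(s)}[0,\varepsilon_d]\subset Q,
\]
a copy of $[0,1]^{n+|s|}$ in which $Q_\emptyset\cong[0,1]^n$ occupies the distinguished coordinates and all other coordinates vanish. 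Then $Q_\emptyset\subset Q_s\subset Q_{s^\frown j}$ for all $s,j$; $\sum_d\varepsilon_d<\infty$, so all the $Q_s$ really sit in $Q$; $\diam Q_s\to0$ along a fixed enumeration of $\mathbb N^{<\omega}$; and the ``protrusion'' of the subfamily $\{Q_{s'}:s'\sqsupseteq s^\frown j\}$ beyond the coordinate space of $Q_s$ tends to $0$ as $j\to\infty$. Set $X_T=\overline{\bigcup_{s\in T}Q_s}$. Because the pieces shrink, $T\mapsto X_T$ is continuous from the space of trees on $\mathbb N$ into $C(Y)\subset2^Y$; because every $Q_s$ contains $Q_\emptyset$, $X_T$ is a continuum; and $\dim X_T\ge\dim Q_\emptyset=n$.

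The tree structure pins down the geometry. If $x\in X_T\setminus\bigcup_{s\in T}Q_s$ then $x=\lim_i x_i$ with $x_i\in Q_{s_i}$, and for every coordinate $d$ on which $x$ is nonzero one has $d\in D(s_i)$ for all large $i$; hence the coherent closure of the set of such $d$ is the chain $D(\sigma)$ of a finite or infinite sequence $\sigma$, every finite initial segment of $\sigma$ is an initial segment of $s_i$ for large $i$ (so it lies in $T$), and $x\in Q_\sigma$ when $\sigma$ is finite. Therefore, if $T$ is well-founded, $\sigma$ is necessarily finite (an infinite $\sigma$ would be a branch of $T$), so $X_T=\bigcup_{s\in T}Q_s$ is a countable union of finite-dimensional cubes, hence (strongly) countable-dimensional, hence weakly infinite-dimensional; combined with $\dim X_T\ge n$ this gives $X_T\in\mathcal W_n(Y)\cap C(Y)$. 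If $T$ is ill-founded, a branch $\beta$ yields nested cubes $Q_{\beta|_k}$ whose union closes up to $Q_\emptyset\times\prod_{d\in D(\beta)}[0,\varepsilon_d]$, a genuine Hilbert cube (here positivity of the $\varepsilon_d$ enters) contained in $X_T$; a compactum containing a Hilbert cube is strongly infinite-dimensional, since an essential family of the subcube restricts to an essential family in the whole space, so $X_T\notin\mathcal W(Y)\supset\mathcal W_n(Y)$. Thus $(T\mapsto X_T)^{-1}\bigl(\mathcal W_n(Y)\cap C(Y)\bigr)=WF$, and the same map reduces $WF$ to $\mathcal W_n(Y)$ inside $2^Y$; with the upper bound, both families are $\Pi^1_1$-complete.

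The delicate point, and the main obstacle, is the joint choice of the lengths $\varepsilon_d$: they must decay fast enough for the ``protrusion'' condition to hold --- so that in the well-founded case no phantom limit point escapes $\bigcup_{s\in T}Q_s$ and $X_T$ stays countable-dimensional --- and fast enough that the whole configuration embeds in $Q$ with diameters tending to $0$ (for continuity of $T\mapsto X_T$), while remaining strictly positive so that in the ill-founded case the branch cube is a true Hilbert cube. This is elementary but careful bookkeeping; if a reduction of this kind is already recorded in the literature on the descriptive complexity of infinite-dimensional compacta one may quote it and merely transfer it to $Y$ via an embedded Hilbert cube. Finally, since $X_T$ is even countable-dimensional (hence a $C$-space) in the well-founded case while it contains a Hilbert cube (hence is not a $C$-space) in the ill-founded case, the same map also establishes $\Pi^1_1$-hardness for the families of $C$-space compacta and continua taken up in the sequel.
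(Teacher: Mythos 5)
Your proof is essentially correct, and its two halves relate to the paper differently. The upper bound is exactly the paper's: $\mathcal W(Y)$ is coanalytic by Proposition~\ref{p1}, $\mathcal D_n(Y)=\{K:\dim K\ge n\}$ is Borel (in fact $F_\sigma$), and one intersects with the closed set $C(Y)$. For hardness, however, the paper does not construct anything: it quotes \cite[Corollary 3.3]{Kr} for the $\Pi^1_1$-hardness of $\mathcal W(I^\omega)\cap C(I^\omega)$ and says an analogous argument works for $\mathcal W_n(Y)\cap C(Y)$ and general $Y$ (hardness in $2^Y$ then being immediate since the reduction lands in $C(Y)$). You instead rebuild an explicit reduction of $WF$ via the tree-of-nested-cubes compactum $X_T$, which is in spirit the same dichotomy the cited constructions use (a countable union of finite-dimensional cubes, hence strongly countable-dimensional and weakly infinite-dimensional, in the well-founded case; a set containing a genuine Hilbert cube, hence strongly infinite-dimensional, in the ill-founded case) --- indeed the paper's later Theorem~\ref{t:Hm} describes precisely such a reduction $\xi$ from \cite{KS}; so your approach buys self-containedness at the cost of the bookkeeping you acknowledge, while the paper buys brevity by citation. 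Your key structural arguments are sound: the chain analysis of supports shows that for well-founded $T$ the union $\bigcup_{s\in T}Q_s$ is already closed (note this needs no decay of the $\varepsilon_d$ at all, only the support bookkeeping), and strong infinite-dimensionality does pass up from a closed subcube. Two small repairs: the claim ``$\diam Q_s\to 0$'' is false, since every $Q_s$ contains $Q_\emptyset$; what continuity actually needs, and what your choice of $\varepsilon_d$ delivers, is that $d_H(Q_s,Q_{s'})$ is small for a suitable initial segment $s'$ of $s$ outside a finite exceptional set of sequences. Also exclude (or absorb) the empty tree so that $X_T\neq\emptyset$, and note that $Y$'s local connectedness is used only in the upper bound via Proposition~\ref{p1}, not in your reduction.
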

\begin{proof}
 It is well known that the family $\mathcal D_n(Y)$ of all closed subsets of $Y$ that have dimension at least $n$ is $F_\sigma$ in $2^Y$ (for any compact $Y$). Since $\mathcal W_n(Y)= \mathcal W(Y) \cap \mathcal D_n(Y)$,  the set $\mathcal W_n(Y)$ is coanalytic  by Proposition~\ref{p1}. Hence   $\mathcal W_n(Y)\cap C(Y)$ is also coanalytic. The $\Pi^1_1$-hardness of $\mathcal W(Y)\cap C(Y)$ for $Y=I^\omega$ was established in~\cite[Corollary 3.3]{Kr} and an analogous argument gives the hardness for each set $\mathcal W_n(Y)\cap C(Y)$ and arbitrary $Y$ as in the hypothesis. It follows immediately that  each $\mathcal W_n(Y)$ is $\Pi^1_1$-hard as well.

\end{proof}

Recall that a space $X$ is  a $C$-\emph{space} if for each sequence $\mathcal U_1, \mathcal U_2, \ldots$
of open covers of $X$
there exists a sequence $\mathcal V_1, \mathcal V_2, \ldots$,
of families of pairwise disjoint open subsets of $X$ such that
each $\mathcal V_i$ refines $\mathcal U_i$ and
$\bigcup_{i=1}^\infty \mathcal V_i$ is a cover of $X$.

If $(X,d)$ is compact, then covers $\mathcal U_1, \mathcal U_2, \ldots$ can be replaced by a sequence of positive reals $\epsilon_i\rightarrow 0$, if $i\rightarrow \infty$, and  the cover $\bigcup_{i=1}^\infty \mathcal V_i$ can be replaced by a finite subcover. Each family $\mathcal V_i$ is then finite and its elements have diameters $<\epsilon_i$. The definition can be rewritten as follows:
\begin{multline}\label{C finite}
\forall (n_1,n_2,\dots)\in \mathbb{N}^\omega \ \exists k\in \mathbb{N} \ \exists (\mathcal V_1,\mathcal V_2,\dots,\mathcal V_k) \ \forall i\le k \\
\bigl[\text{$\mathcal V_i $ is finite, consists of open subsets of $X$ and}\\
 [V,V'\in \mathcal V_i, V\neq V' \Rightarrow V\cap V'= \emptyset]\\ \text{and}\quad [V\in \mathcal V_i,  x,y\in V \Rightarrow d(x,y)< \frac1{n_i}]
 \quad\text{and}\quad \bigcup\bigcup_{i\le k}\mathcal V_i = X\bigr].
\end{multline}

\begin{proposition}\label{p2}
For each  $n\ge 0$, the families $\mathcal C_n(Y)$ of $C$-compacta of dimensions $\ge n$ in a compact space $Y$ and $\mathcal C_n(Y)\cap C(Y)$  are coanalytic subsets of $2^Y$.
\end{proposition}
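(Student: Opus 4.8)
The plan is to reduce the statement to showing that the set $\mathcal C(Y)$ of all compact $C$-subspaces of $Y$ is coanalytic in $2^Y$. Since the family $\mathcal D_n(Y)$ of closed subsets of $Y$ of dimension $\ge n$ is $F_\sigma$ in $2^Y$ (as recalled in the proof of Proposition~\ref{co1}) and $\mathcal C_n(Y)=\mathcal C(Y)\cap\mathcal D_n(Y)$, this immediately gives that $\mathcal C_n(Y)$ is coanalytic, and then so is $\mathcal C_n(Y)\cap C(Y)$, because $C(Y)$ is closed in $2^Y$.

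To deal with $\mathcal C(Y)$ I would start from the reformulation~\eqref{C finite} and replace the geometrically unbounded quantifier ``$\exists(\mathcal V_1,\dots,\mathcal V_k)$'' by a search through a countable pool. Fix a countable base $\mathcal B$ of $Y$ closed under finite unions. I claim that $X\subset Y$ is a $C$-space if and only if for every $(n_i)\in\mathbb N^\omega$ there are $k\in\mathbb N$ and finite subfamilies $\mathcal V_1,\dots,\mathcal V_k$ of $\mathcal B$ such that, for every $i\le k$: each $B\in\mathcal V_i$ has $\diam B<1/n_i$; distinct $B,B'\in\mathcal V_i$ satisfy $B\cap B'\cap X=\emptyset$; and $X\subset\bigcup_{i\le k}\bigcup\mathcal V_i$. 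The implication $\Leftarrow$ is immediate: the nonempty traces $B\cap X$, $B\in\mathcal V_i$, form finite families of pairwise disjoint open subsets of $X$ of mesh $<1/n_i$ whose union over $i\le k$ is $X$, which is precisely~\eqref{C finite}.

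For the implication $\Rightarrow$, given $(n_i)$ I would apply~\eqref{C finite} to the sequence $(8n_i)_i$ to obtain finite families $\mathcal W_1,\dots,\mathcal W_k$ of pairwise disjoint open subsets of $X$, with $\mesh\mathcal W_i<\frac1{8n_i}$ and $\bigcup_{i\le k}\bigcup\mathcal W_i=X$, and then use the shrinking lemma to replace each $W\in\mathcal W_i$ by an open $W'\subset X$ with $\overline{W'}\subset W$, keeping the family of the $W'$ a cover of $X$. Each $\overline{W'}$ is a compact subset of diameter $<\frac1{8n_i}$ of $W=\widetilde W\cap X$ for some open $\widetilde W\subset Y$; intersecting $\widetilde W$ with a sufficiently small neighbourhood of $\overline{W'}$ and using that $\mathcal B$ is a base closed under finite unions, I can pick a single $B_W\in\mathcal B$ with $\overline{W'}\subset B_W\subset\widetilde W$ and $\diam B_W<1/n_i$. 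Then $\mathcal V_i=\{B_W:W\in\mathcal W_i\}$ does the job: $B_W\cap X\subset W$ gives $B_W\cap B_{W''}\cap X\subset W\cap W''=\emptyset$ for $W\ne W''$ in $\mathcal W_i$, while $\bigcup_{i\le k}\bigcup_{W}B_W\supset\bigcup_{i\le k}\bigcup_W\overline{W'}=X$.

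With this characterization the complexity count is routine. For fixed $k$ and a fixed tuple $(\mathcal V_1,\dots,\mathcal V_k)$ of finite subfamilies of $\mathcal B$, the conjunction in the characterization defines a Borel subset of $2^Y\times\mathbb N^\omega$: ``$\diam B<1/n_i$'' restricts finitely many coordinates of $(n_i)$ (a clopen condition), ``$B\cap B'\cap X=\emptyset$'' is closed in $X\in2^Y$, and ``$X\subset\bigcup_{i\le k}\bigcup\mathcal V_i$'', being of the form $X\cap F=\emptyset$ for a closed $F\subset Y$, is open in $X$. Taking the countable union over $k$ and over the countably many such tuples yields a Borel set $P\subset2^Y\times\mathbb N^\omega$, and $\mathcal C(Y)=\{X\in2^Y:\ \forall(n_i)\in\mathbb N^\omega\ (X,(n_i))\in P\}$; its complement is the projection along $\mathbb N^\omega$ of the Borel set $(2^Y\times\mathbb N^\omega)\setminus P$, hence analytic, so $\mathcal C(Y)$ is coanalytic. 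I expect the main obstacle to be exactly the implication $\Rightarrow$ of the countable-pool characterization: one must exchange the abstract open sets of a $C$-space witness for basis elements without spoiling pairwise disjointness, and what makes this possible is that the cells of a $C$-space cover are uniformly small, so after a preliminary shrinking each of them can be swallowed by a single element of $\mathcal B$.
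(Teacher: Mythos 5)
Your proof is correct, and at the decisive step it takes a different (and more airtight) route than the paper. The paper also starts from~\eqref{C finite} and also concludes via $\mathcal C_n(Y)=\mathcal C(Y)\cap\mathcal D_n(Y)$ and closedness of $C(Y)$, but it handles the inner existential quantifier by passing to complements and quantifying over finite families of \emph{closed} subsets of $Y$ (formula~\eqref{C comp}), and then simply asserts that this ``easily yields'' coanalyticity; there the witnesses range over an uncountable ($\sigma$-compact) collection, so checking that the part after the universal quantifier ``$\forall (n_i)\in\mathbb N^\omega$'' is Borel requires some care (a projection of a Borel set along a compact factor is in general only analytic). You instead discretize that existential quantifier: witnesses are finite subfamilies of a fixed countable base $\mathcal B$ closed under finite unions, the equivalence being secured by applying~\eqref{C finite} with the finer constants $1/(8n_i)$, shrinking the cover so its members have compact closures inside the original cells, and swelling each compact piece into a single small basis element contained in the ambient open set --- after which all quantifiers except the single $\forall (n_i)$ are countable, the matrix is visibly Borel (closed, open and clopen conditions), and $\Pi^1_1$-ness is immediate. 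This is exactly the kind of countable-witness refinement the paper itself performs in Proposition~\ref{p1} (the countable dense family $\mathcal E$ and Claim on finite intersections), but does not carry out in the proof of this proposition; your version is slightly longer, while the paper's is terser at the cost of leaving the Borel count of~\eqref{C comp} to the reader. One small point to keep explicit when writing it up: the pairwise disjointness you record is of the traces ($B\cap B'\cap X=\emptyset$), not of the basis elements themselves, which is all that~\eqref{C finite} needs and is what makes the closedness of that condition in $X\in 2^Y$ work.
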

\begin{proof}
In view of~\eqref{C finite}, the definition of a $C$-compactum $(X,d)$ in terms of closed subsets of $Y$ runs as follows:
\begin{multline}\label{C comp}
\forall (n_1,n_2,\dots)\in \mathbb{N}^\omega \ \exists k\in \mathbb{N} \ \exists (\mathcal F_1,\mathcal F_2,\dots,\mathcal F_k)\in (2^{(2^Y)})^k\\
\forall i\le k \ \bigl[\text{$\mathcal F_i$ is finite and}\quad \forall F\in \mathcal F_i \ (F\subset X)\\
\text{and}\quad \forall F,F'\in \mathcal F_i \ (F\neq F' \Rightarrow F\cup F'= X)\\
\text{and}\quad \forall F\in \mathcal F_i \  (x,y\in X\setminus F \Rightarrow d(x,y)< \frac1{n_i})
\quad\text{and}\quad \bigcap\bigcup_{i\le k}\mathcal F_i = \emptyset\bigr].
\end{multline}
 Formula~\eqref{C comp} easily yields that the set $\mathcal C(Y)$ of all $C$-compacta in $Y$  is coanalytic. Hence,  each of $\mathcal C_n(Y)=\mathcal C(Y)\cap \mathcal D_n(Y)$ and $\mathcal C_n(Y)\cap C(Y)$ is also coanalytic.

\end{proof}

\begin{proposition}\label{p3}
Let $Y$ be a compact space containing a Hilbert cube. Then,
for each integer $n\ge 0$,
the families $\mathcal C_n(Y)$   and $\mathcal C_n(Y)\cap C(Y)$ are $\Pi^1_1$-complete.
\end{proposition}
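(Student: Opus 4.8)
The plan is to follow exactly the pattern already established for $\mathcal W_n(Y)$ in Proposition~\ref{co1}, since the two propositions are structurally parallel. By Proposition~\ref{p2} each set $\mathcal C_n(Y)$ and $\mathcal C_n(Y)\cap C(Y)$ is coanalytic, so it remains only to prove $\Pi^1_1$-hardness. First I would recall the known hardness result: the family of $C$-compacta (equivalently, in the compact case, the family of $C$-continua) in the Hilbert cube $I^\omega$ is $\Pi^1_1$-hard; this is again due to~\cite[Corollary 3.3]{Kr} or can be obtained by the same reduction used there for weak infinite-dimensionality, since every weakly infinite-dimensional compactum that arises in that construction is in fact a $C$-space (finite-dimensional compacta and countable unions thereof are $C$-spaces, and the failure of the $C$-property in the construction coincides with strong infinite-dimensionality of the relevant "hard" instances). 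Thus there is a reduction witnessing that $\mathcal C(I^\omega)\cap C(I^\omega)$ is $\Pi^1_1$-hard.

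The core step is to upgrade this to arbitrary $Y$ containing a Hilbert cube and to dimension $\ge n$. Fix an embedded copy $Q\subset Y$ of the Hilbert cube with $Q$ a $Z$-set (or at least nowhere dense) in $Y$, together with an $n$-cell $\Delta^n$ whose interior is disjoint from $Q$ and which meets $Q$ in a single point $p$ (if $n=0$ take $\Delta^0=\{p\}$; for $n\ge 1$ attach a small $n$-cell to $Q$ at $p$, pushing its interior slightly off $Q$ inside $Y$). Given a reduction $\eta\colon Z\to C(Q)$ of an arbitrary coanalytic set $C\subset Z$ ($Z$ a zero-dimensional complete space) to $\mathcal C(Q)\cap C(Q)$ with $p\in\eta(z)$ for all $z$ (this normalization is harmless), define $\xi\colon Z\to C(Y)$ by $\xi(z)=\eta(z)\cup\Delta^n$. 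Then $\xi$ is continuous, $\xi(z)$ is always a continuum, $\dim\xi(z)\ge n$ always (because of the $n$-cell summand), and $\xi(z)$ is a $C$-space if and only if $\eta(z)$ is — because a finite union of two closed sets, one of which is the finite-dimensional (hence $C$-space) $n$-cell, is a $C$-space iff the other summand is (the countable sum theorem for $C$-spaces, \cite[p.\ 173]{Pol1} style, applies, and conversely a closed subspace of a $C$-space is a $C$-space). Hence $\xi^{-1}(\mathcal C_n(Y)\cap C(Y))=\eta^{-1}(\mathcal C(Q)\cap C(Q))=C$, giving $\Pi^1_1$-hardness of $\mathcal C_n(Y)\cap C(Y)$, and the same $\xi$ (viewed into $2^Y$) shows $\mathcal C_n(Y)$ is $\Pi^1_1$-hard as well, exactly as in the last line of the proof of Proposition~\ref{co1}.

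Combining coanalyticity from Proposition~\ref{p2} with the hardness just established yields that both $\mathcal C_n(Y)$ and $\mathcal C_n(Y)\cap C(Y)$ are $\Pi^1_1$-complete, which is the assertion.

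I expect the only real obstacle to be the first ingredient: verifying that the reduction of~\cite[Corollary 3.3]{Kr}, originally engineered to detect weak versus strong infinite-dimensionality, simultaneously detects the $C$-space property. This is plausible because in the relevant hard instances the "weakly infinite-dimensional" outputs are built as countable unions of finite-dimensional pieces (hence genuine $C$-spaces, not merely weakly infinite-dimensional), while the "strongly infinite-dimensional" outputs contain a copy of $I^\omega$ and so fail the $C$-property; but if the construction there does not literally have this feature, one would instead need a direct $\Pi^1_1$-hardness proof for $\mathcal C(I^\omega)\cap C(I^\omega)$, e.g.\ by reducing the $\Pi^1_1$-complete set of well-founded trees to it via a Henderson-type compactum construction in which ill-foundedness produces an embedded Hilbert cube (destroying the $C$-property) and well-foundedness keeps the compactum a countable union of finite-dimensional sets. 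Either way the passage from $I^\omega$ to general $Y$ and to dimension $\ge n$ is the routine $n$-cell-attachment argument above.
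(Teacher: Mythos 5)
Your proposal is correct and is essentially the paper's own argument: coanalyticity comes from Proposition~\ref{p2}, and $\Pi^1_1$-hardness is obtained exactly as for $\mathcal W_n(Y)$ in Proposition~\ref{co1}, via the reduction behind \cite[Corollary 3.3]{Kr} (made explicit in the proof of Theorem~\ref{t:Hm} using \cite{KS}), whose positive outputs are countable unions of Euclidean cubes of dimension $\ge n$ (hence $C$-spaces) and whose negative outputs contain Hilbert cubes (hence fail the $C$-property) --- so the feature you flag as the ``only real obstacle'' does hold, and it also makes your $n$-cell attachment and the passage to general $Y$ immediate, since being a $C$-space of dimension $\ge n$ is intrinsic. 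One small correction to your normalization: an arbitrary compact $Y$ containing a Hilbert cube $Q$ need not have room to attach a cell to $Q$ with interior off $Q$, so the $n$-cell should simply be taken inside $Q$ itself, which changes nothing in your union argument (closed subspaces of $C$-spaces are $C$-spaces and the finite closed sum theorem for $C$-spaces applies).
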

\begin{proof}
The argument for the $\Pi^1_1$-hardness of each of the above families is the same as in the proof of  Proposition~\ref{co1}. So, the conclusion follows from Proposition~\ref{p2}.

\end{proof}

\begin{proposition}\label{t1}
Let $Y$ be a locally connected continuum.
\begin{enumerate}
\item
 The families
$\mathcal W_n(Y)$, $\mathcal W_n(Y)\cap C(Y)$,  $\mathcal C_n(Y)$ and  $\mathcal C_n(Y)\cap C(Y)$, $n\ge 1$, are contained in  $\mathcal D_1(Y)$,  a $\sigma Z$-set  in $2^Y$.
\item
If each non-empty open subset of $Y$ contains an $n$-cell, $n\ge 2$, then the families
$\mathcal W_n(Y)\cap C(Y)$ and $\mathcal C_n(Y)\cap C(Y)$ are contained in $\mathcal D_2(Y)\cap  C(Y)$, a  $\sigma Z$-set in $C(Y)$.
  \end{enumerate}
  \end{proposition}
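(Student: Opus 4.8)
The plan is to reduce both parts to the two cited facts that $\mathcal D_1(Y)$ is a $\sigma Z$-set in $2^Y$ (for a locally connected continuum $Y$, by Example~\ref{ex1}(1) with $n=1$, since every non-empty open subset of a nondegenerate locally connected continuum has dimension $\ge 1$) and that $\mathcal D_2(Y)\cap C(Y)$ is a $\sigma Z$-set in $C(Y)$ (again by Example~\ref{ex1}(1), with $n=2$, under the hypothesis of part (2)). Thus the entire content of the proposition is the inclusion of the four families into the relevant $\mathcal D_k$, i.e., a lower bound on dimension.

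For part (1): I would argue that each of $\mathcal W_n(Y)$, $\mathcal C_n(Y)$ and their restrictions to $C(Y)$ is by definition contained in $\mathcal D_n(Y)$ (the members have dimension $\ge n$), and $\mathcal D_n(Y)\subset \mathcal D_1(Y)$ whenever $n\ge 1$; since $Y$ is a nondegenerate locally connected continuum, $\mathcal D_1(Y)$ is a $\sigma Z$-set in $2^Y$ by Example~\ref{ex1}(1). This is essentially immediate from the definitions and needs no real work — the only thing to remark is that ``dimensions $\ge n$'' in the names of the families is exactly the statement of containment in $\mathcal D_n(Y)$.

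For part (2): here the point is to promote the trivial bound $\dim\ge n\ge 2$ (which would only give containment in $\mathcal D_2(Y)\cap C(Y)$ directly when $n\ge 2$) — wait, in fact when $n\ge 2$ the containment $\mathcal W_n(Y)\cap C(Y)\subset \mathcal D_2(Y)\cap C(Y)$ and $\mathcal C_n(Y)\cap C(Y)\subset \mathcal D_2(Y)\cap C(Y)$ is again immediate from $\mathcal D_n\subset\mathcal D_2$. So the real content of part (2) is simply that under the stated local hypothesis on $Y$, the set $\mathcal D_2(Y)\cap C(Y)$ is a $\sigma Z$-set in the Hilbert cube $C(Y)$; this is Example~\ref{ex1}(1) applied with $n=2$, valid precisely because every non-empty open subset of $Y$ contains a $2$-cell and hence has dimension $\ge 2$. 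I would state this explicitly as the justification. The only subtlety worth flagging is that $C(Y)$ is genuinely a Hilbert cube here: $Y$ is a locally connected nondegenerate continuum and, containing $2$-cells in every open set, has no free arcs, so $C(Y)$ is homeomorphic to $I^\omega$ by the Curtis–Schori theorem cited in the Preliminaries, which is what makes ``$\sigma Z$-set'' meaningful.

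There is essentially no obstacle: the proposition is a packaging statement whose whole force is borrowed from Example~\ref{ex1}(1). The one place to be slightly careful is to note that for part (1) we only need $\dim\ge 1$ for members (so we land in $\mathcal D_1$), whereas for part (2) we need the local $n$-cell hypothesis not for the dimension bound on the members but to guarantee $\mathcal D_2(Y)\cap C(Y)$ itself is a $\sigma Z$-set; I would make sure the write-up does not conflate these two roles of the number $2$.
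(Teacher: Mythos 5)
Your proposal is correct and follows essentially the same route as the paper: the inclusions are immediate from the dimension bounds, and the whole weight rests on $\mathcal D_1(Y)$ being a $\sigma Z$-set in $2^Y$ and $\mathcal D_2(Y)\cap C(Y)$ being a $\sigma Z$-set in $C(Y)$, the latter cited exactly as you do via Example~\ref{ex1}(1) (with your correct observations that the $n$-cell hypothesis gives $\dim\ge 2$ for open sets and rules out free arcs, so $C(Y)$ is a Hilbert cube). The only difference is in part (1), where the paper justifies the $\sigma Z$-property of $\mathcal D_1(Y)$ more elementarily by the Curtis deformation of $2^Y$ through finite sets~\cite{Cu} rather than by invoking the absorber theorem with $n=1$; your citation also works, since every non-empty open subset of a nondegenerate locally connected continuum has dimension $\ge 1$ and an $F_\sigma$-absorber is itself a $\sigma Z$-set.
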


\begin{proof}
 The set  $\mathcal D_1(Y)$ is a $\sigma Z$-set in $2^Y$ since there is a deformation $2^Y\times I\to 2^Y$ through finite sets~\cite{Cu}).
  The set $\mathcal D_2(Y)\cap  C(Y)$  is a $\sigma Z$-set in $C(Y)$ as it is an $F_{\sigma}$-absorber in $C(Y)$ (see Examples~\ref{ex1} (1)).

  \end{proof}

Denote by $\mathcal{SCD}_n(X)$ the family of all strongly countable-dimensional compacta of dimension $\ge n$ in a space $X$.
\begin{theorem}\label{t:Hm} Let $X$ be a locally connected continuum such that each non-empty open subset of $X$ contains a copy  of the Hilbert cube (in particular, $X$ can be a Hilbert cube manifold).

Families $\mathcal{SCD}_n(X)$, $\mathcal W_n(X)$, and $\mathcal C_n(X)$ are coanalytic absorbers in $2^X$ for $n\ge 1$.

Families  $\mathcal{SCD}_n(X)\cap C(X)$,
$\mathcal W_n(X)\cap C(X)$ and $\mathcal C_n(X)\cap C(X)$ are coanalytic absorbers in $C(X)$ for  $n\ge 2$.
\end{theorem}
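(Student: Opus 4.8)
The plan is to verify the three defining properties of a $\Pi^1_1$-absorber for each of the six families, two of which are already in hand. Coanalyticity is Proposition~\ref{co1} for $\mathcal W_n(X)$ and $\mathcal W_n(X)\cap C(X)$, Proposition~\ref{p2} for $\mathcal C_n(X)$ and $\mathcal C_n(X)\cap C(X)$, and the argument of~\cite{KS} (which uses only compactness of $X$) for $\mathcal{SCD}_n(X)$ and $\mathcal{SCD}_n(X)\cap C(X)$. Containment in a $\sigma Z$-set comes from Proposition~\ref{t1}: the three families in $2^X$ sit inside $\mathcal D_1(X)$ (for $\mathcal{SCD}_n(X)$ trivially, as $\mathcal{SCD}_n(X)\subset\mathcal D_n(X)\subset\mathcal D_1(X)$ when $n\ge 1$), and the three families in $C(X)$ sit inside $\mathcal D_2(X)\cap C(X)$ --- this is where the restriction $n\ge 2$ in the continuum case is used, together with the fact that each non-empty open subset of $X$, containing a Hilbert cube, contains an $n$-cell. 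So the whole task reduces to proving strong $\Pi^1_1$-universality of the six families, and for that I would use Approach~II of Subsection~\ref{ap2}, which applies because $X$ is a Peano continuum.

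Fix a coanalytic set $M\subset I^\omega$ and let $\mathcal A(Y)$ denote whichever of $\mathcal W_n(Y)$, $\mathcal C_n(Y)$, $\mathcal{SCD}_n(Y)$ is being considered, intersected with $C(Y)$ in the continuum case. By Propositions~\ref{co1},~\ref{p3} and~\cite{KS} the set $\mathcal A(I^\omega)\cap C(I^\omega)$ is $\Pi^1_1$-complete, so there is a continuous reduction $\xi\colon I^\omega\to C(I^\omega)$ with $\xi^{-1}(\mathcal A(I^\omega)\cap C(I^\omega))=M$. I would take $\xi$ in the form supplied by the hardness constructions of~\cite{Kr} and~\cite{KS}, which attach a fixed Hilbert cube to every output, so that $\dim\xi(q)=\infty$ for all $q$; then the reduction equation becomes the dimension-free statement that $q\in M$ if and only if $\xi(q)$ is, respectively, weakly infinite-dimensional, a $C$-space, strongly countable-dimensional --- equivalently, $q\notin M$ if and only if $\xi(q)$ is, respectively, strongly infinite-dimensional, not a $C$-space, not strongly countable-dimensional. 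For a non-empty open $U\subset X$ I would pick a Hilbert cube $Q_U\subset U$ and let $\varphi_U\colon I^\omega\to C(U)$ be the composition of $\xi$ with a homeomorphism $C(I^\omega)\to C(Q_U)$ and the inclusion $C(Q_U)\subset C(U)$; then $\varphi_U^{-1}(\mathcal A(X))=\varphi_U^{-1}(\mathcal A(X)\cap C(X))=M$. Approach~II then produces, from a given embedding $f$ with $f(K)$ a $Z$-set and a given $\epsilon>0$, a $Z$-embedding $g$ that $\epsilon$-approximates $f$, agrees with $f$ on $K$, and for $q\in I^\omega\setminus K$ has the shape $g(q)=A(q)\cup\bigcup_{U\in S(q)}\overline{\varphi}_U(q)$, with $S(q)$ finite, $A(q)$ at most $1$-dimensional (a $1$-dimensional Peano continuum if $\mathcal A\subset C(X)$), each $\overline{\varphi}_U(q)$ either a finite graph or a copy $\varphi_U(q)$, the copy occurring for at least one $U_0\in S(q)$, and $g(q)$ a continuum in the continuum case.

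It remains to check that $g^{-1}(\mathcal A)\setminus K=M\setminus K$. For this I would use three standard facts about each of the three classes of compacta in play --- weakly infinite-dimensional, $C$-space, strongly countable-dimensional: the finite closed sum theorem (a finite union of closed members of the class is a member), the inclusion of all finite-dimensional compacta in the class, and closed-heredity (a compactum with a closed subset outside the class is itself outside the class). Let $q\in I^\omega\setminus K$. If $q\in M$, then $\varphi_{U_0}(q)$ belongs to the class, while $A(q)$ and the remaining pieces $\overline{\varphi}_U(q)$ are at most $1$-dimensional, hence in the class; by the finite closed sum theorem $g(q)$ is in the class, and $\dim g(q)\ge\dim\varphi_{U_0}(q)=\infty\ge n$, so $g(q)\in\mathcal A(X)$ (and $g(q)\in C(X)$ in the continuum case). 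If $q\notin M$, then $\varphi_{U_0}(q)\cong\xi(q)$ is a closed subset of $g(q)$ lying outside the class, so $g(q)$ lies outside the class and $g(q)\notin\mathcal A(X)$. This gives~\eqref{eq:uni} and, with it, the theorem.

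The hard part, I expect, will be arranging the reduction $\xi$: one wants it not merely to reduce $M$ to the compound condition ``in the class and of dimension $\ge n$'', but to yield continua of infinite dimension for every parameter, so that the reduction equation reads purely in terms of the class. That is exactly what keeps the finite-dimensional debris $A(q)$ and the finite graphs inside $g(q)$ harmless in both directions of~\eqref{eq:uni}: for $q\notin M$ the set $g(q)$ still contains the offending closed piece $\varphi_{U_0}(q)$, and for $q\in M$ the debris, being finite-dimensional, cannot spoil membership by the finite sum theorem. Granting such a $\xi$ --- which the known hardness proofs provide --- the remainder is routine: the $Z$-embedding properties of $g$ are furnished verbatim by Approach~II, and the verification above rests only on classical closed-sum and closed-heredity properties of the three classes of compacta.
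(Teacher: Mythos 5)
Your overall architecture is exactly the paper's: coanalyticity from Propositions~\ref{co1} and~\ref{p2} (and \cite{KS}), $\sigma Z$-containment from Proposition~\ref{t1}, and strong $\Pi^1_1$-universality via Approach~II with $\varphi_U=\eta_U\circ\xi$ for a single reduction $\xi\colon I^\omega\to C(I^\omega)$, verified by closed-sum and closed-heredity arguments. But there is a genuine error in the one ingredient everything hinges on: your description of $\xi$. You say the hardness constructions ``attach a fixed Hilbert cube to every output, so that $\dim\xi(q)=\infty$ for all $q$,'' and then claim $q\in M$ iff $\xi(q)$ is weakly infinite-dimensional / a $C$-space / strongly countable-dimensional. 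These two statements are incompatible: all three classes are closed-hereditary, and the Hilbert cube is strongly infinite-dimensional, not a $C$-space and not strongly countable-dimensional, so a compactum containing a Hilbert cube lies outside all three classes. With $\xi$ as you describe it, $\xi^{-1}(\mathcal A)=\emptyset$, the reduction equation fails for every $q\in M$, and your later step ``$\varphi_{U_0}(q)$ belongs to the class for $q\in M$'' contradicts your own construction. The correct $\xi$ (the one from~\cite[Theorem 3.1]{KS}, which the paper uses) attaches a Hilbert cube \emph{only} for $q\notin M$; for $q\in M$ it produces a countable union of Euclidean cubes of dimensions $\ge n$, which is strongly countable-dimensional (hence a $C$-space and weakly infinite-dimensional) and of dimension $\ge n$, so the dimension clause is automatic without forcing $\xi(q)$ out of the classes. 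Precisely because no member of any of the six families contains a Hilbert cube, this single $\xi$ reduces $M$ to each of them simultaneously --- that is the paper's key observation, and it is the statement your ``attach a cube everywhere'' device was trying to shortcut.

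With $\xi$ corrected, the rest of your argument goes through and coincides with the paper's proof: the graphs and the at most one-dimensional part $A(q)$ are harmless by the finite closed sum theorem in one direction and by closed heredity in the other, and $\dim g(q)\ge\dim\varphi_{U_0}(q)\ge n$ (you do not need, and do not have, $\dim\varphi_{U_0}(q)=\infty$). One further caution: $\Pi^1_1$-completeness, as defined in the paper, only yields reductions from zero-dimensional domains, whereas strong universality needs a map defined on $I^\omega$ with $\xi^{-1}(\mathcal A)=M$ for an arbitrary coanalytic $M\subset I^\omega$; so you cannot get $\xi$ abstractly from Propositions~\ref{co1} and~\ref{p3} but must invoke the explicit construction of~\cite{KS}, as the paper does.
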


\begin{proof}
We have $\mathcal{SCD}_n(X)\subset \mathcal C_n(X) \subset \mathcal W_n(X)$ (see~\cite{E}).
In view of Propositions~\ref{co1}, \ref{p2} and~\ref{t1}, it suffices to check the strong $\Pi^1_1$-universality of the families.
In the case when $X=I^\omega$
 an Approach I-construction used in the proof of~\cite[Theorem 3.1]{KS} for the strong $\Pi^1_1$-universality of $\mathcal{SCD}_n(I^\omega)$ applies to other families without any change. Its main ingredient  is a continuous mapping
$\xi:I^\omega\to C(I^\omega)$ such that
\begin{itemize}
\item
 $\xi(q)$ is a strongly countable-dimensional continuum of dimension $\ge n$ for $q\in M$ (actually, it is  a countable union of  Euclidean cubes of dimensions $\ge n$) and
 \item
  $\xi(q)$ contains a Hilbert cube for  $q\notin M$,
  \end{itemize}
 where  $M$ is a coanalytic subset of $I^\omega$. In other words, $\xi$ is a continuous reduction of $M$ to $\mathcal{SCD}_n\cap C(I^\omega)$.

Observe that $\xi$ can also be  viewed as a reduction of $M$ to any of the families in our theorem, since their members do not contain Hilbert cubes.

 In a general case, we use Approach II~\ref{ap2}. So, for each nonempty open subset $U$ of $X$, choose an embedding $\eta_U:I^\omega\to U$ and let $\varphi_U(q)=\eta_U\bigl(\xi(q)\bigr)$. We have mappings $\varphi_U:I^\omega\to C(U)$ satisfying $\varphi_U^{-1}(\mathcal A)=M$, where $\mathcal A$ is any of the considered families. Since taking  finite unions of copies $\varphi_U(q)$ and adding zero- or one-dimensional part $A(q)$ of $g(q)$ do not change dimension properties of members of each family, we have that
 $g(q)\in \mathcal A$ iff $q\in M$  for $q\in I^\omega\setminus K$.

\end{proof}

\

Other intriguing  spaces studied in dimension theory are \emph{hereditarily infinite-dimensional compacta} in the sense of Henderson, i.e., infinite-dimensional compact spaces whose nonempty closed subspaces are either infinite-dimensional or zero-dimensional.

\begin{proposition}\label{p4}
The collection $\mathcal{HID}(Y)$ of all hereditarily infinite-dimen\-sional compacta in a compact space $Y$ is a coanalytic subset of $2^Y$.
\end{proposition}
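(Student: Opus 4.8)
The plan is to mimic the pattern used for the $C$-space families in Propositions~\ref{p1}--\ref{p2}: express the property ``$X$ is hereditarily infinite-dimensional'' by a formula whose quantifier structure, after a suitable refinement using a countable dense set, lands in $\Pi^1_1$. Recall the definition: a compact $X\subset Y$ is in $\mathcal{HID}(Y)$ iff $\dim X=\infty$ and every nonempty closed $F\subset X$ satisfies $\dim F\in\{0,\infty\}$, equivalently $\dim F\le 0$ or $\dim F=\infty$. The condition $\dim X=\infty$ is coanalytic (indeed $F_{\sigma\delta}$, since $\mathcal D_n(Y)$ is $F_\sigma$ for each $n$, so $\{\dim=\infty\}=\bigcap_n\mathcal D_n(Y)$ is $G_\delta$, hence certainly coanalytic). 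The substantive part is the hereditary clause.

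First I would rewrite the hereditary clause contrapositively: $X\notin\mathcal{HID}(Y)$ (ignoring the $\dim X=\infty$ part) iff there exists a nonempty closed $F\subset X$ with $1\le\dim F<\infty$, i.e. with $\dim F\ge 1$ and $\dim F\le m$ for some $m$. So
\[
X\in\mathcal{HID}(Y)\iff \dim X=\infty \ \wedge\ \neg\,\exists F\in 2^Y\ \bigl(F\subset X\ \wedge\ \dim F\ge 1\ \wedge\ \exists m\ \dim F\le m\bigr).
\]
Now the inner formula ``$F\subset X\ \wedge\ \dim F\ge 1\ \wedge\ \exists m\ \dim F\le m$'' defines, in the variable $(X,F)\in 2^Y\times 2^Y$, a Borel (in fact $F_\sigma$) set: $\{(X,F):F\subset X\}$ is closed, $\{F:\dim F\ge 1\}=\mathcal D_1(Y)$ is $F_\sigma$, and $\{F:\exists m\ \dim F\le m\}=\bigcup_m(2^Y\setminus\mathcal D_{m+1}(Y))$ is $F_\sigma$. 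Projecting a Borel set along the single existential quantifier $\exists F$ gives an analytic subset of $2^Y$, whose negation is coanalytic; intersecting with the $G_\delta$ set $\{\dim X=\infty\}$ keeps it coanalytic. That is exactly the conclusion.

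The key steps in order are: (i) record that $\{(X,F)\in 2^Y\times 2^Y: F\subset X\}$ is closed in $2^Y\times 2^Y$; (ii) invoke the standard fact that $\mathcal D_k(Y)$ is $F_\sigma$ in $2^Y$ for every $k$ and every compact $Y$ (used already in Propositions~\ref{co1} and~\ref{p2}), to see that both ``$\dim F\ge 1$'' and ``$\exists m\ \dim F\le m$'' cut out $F_\sigma$ subsets; (iii) conclude the bracketed conjunction is $F_\sigma$, hence Borel, in $2^Y\times 2^Y$; (iv) apply ``continuous image of Borel is analytic'' to the projection onto the first coordinate to get that $\{X: \exists F\ (\dots)\}$ is analytic; (v) take complements and intersect with the $G_\delta$ set $\bigcap_n\mathcal D_n(Y)$ to conclude $\mathcal{HID}(Y)\in\Pi^1_1$.

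There is no real obstacle here — it is a routine complexity computation, and the only point requiring a little care is the bookkeeping that a closed subset relation, intersected with two $F_\sigma$ conditions, remains Borel before the single projection, so that the projection produces an \emph{analytic} (not merely $\Sigma^1_2$) set; unlike the $C$-space or strong-infinite-dimensionality arguments, no dense-set refinement is needed because the quantifier over subcontinua that would cause trouble simply does not appear in Henderson's definition. (I would also remark, parenthetically, that this is consistent with the abstract's claim that $\mathcal{HID}(I^\omega)$ is $\Pi^1_1$-complete — completeness, i.e. $\Pi^1_1$-hardness, is a separate matter proved elsewhere and is not part of the present statement.)
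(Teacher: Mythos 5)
Your proposal is correct and takes essentially the same route as the paper, which simply writes $X\in\mathcal{HID}(Y)$ as a $\forall F\in 2^Y$ statement with a Borel matrix and notes that a direct complexity count gives $\Pi^1_1$ --- exactly your projection-of-a-Borel-set-and-complement computation, with the quantifier over closed subsets causing no trouble and no dense-set refinement needed. Two harmless bookkeeping slips: $\bigcap_n\mathcal D_n(Y)$ is $F_{\sigma\delta}$ rather than $G_\delta$, and $\bigcup_m\bigl(2^Y\setminus\mathcal D_{m+1}(Y)\bigr)$ is $G_{\delta\sigma}$ rather than $F_\sigma$, but since only Borelness of these sets is used, the argument is unaffected.
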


\begin{proof}
We have
$X\in\mathcal{HID}(Y)$ if and only if
\begin{equation}\label{e:HID}
\forall F\in 2^Y \ \bigl(\text{if $F\subset X$, then $\dim F=0$ or} \ \forall n \ \dim F \ge n\bigr).
\end{equation}
A direct evaluation of the projective complexity of~\eqref{e:HID} shows that $X$ is coanalytic.

\end{proof}

\begin{theorem}\label{t:HID}
The families  $\mathcal{HID}(I^\omega)$  and $\mathcal{HID}(I^\omega)\cap C(I^\omega)$ are  $\Pi^1_1$-complete subsets of $2^{I^\omega}$ and of $C(I^\omega)$, respectively.
\end{theorem}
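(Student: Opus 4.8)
\textbf{Proof plan for Theorem~\ref{t:HID}.}
The plan is to prove the two assertions in parallel, following the same two-step scheme that was used for the other coanalytic-completeness results in this paper. By Proposition~\ref{p4} the family $\mathcal{HID}(I^\omega)$ is already known to be coanalytic, and since $C(I^\omega)$ is a closed (in fact $G_\delta$, indeed Borel) subset of $2^{I^\omega}$, the trace $\mathcal{HID}(I^\omega)\cap C(I^\omega)$ is coanalytic as well. Thus the only thing to establish is $\Pi^1_1$-hardness of each family. Because $C(I^\omega)$ is itself a Hilbert cube and $\mathcal{HID}(I^\omega)\cap C(I^\omega)$ sits inside $2^{I^\omega}$, a reduction witnessing hardness of the continuum version automatically witnesses hardness of $\mathcal{HID}(I^\omega)$ in $2^{I^\omega}$; so it is enough to produce a single continuous reduction $\xi\colon Z\to C(I^\omega)$ from some fixed $\Pi^1_1$-hard set $P\subseteq Z$ (with $Z$ a zero-dimensional complete space, e.g. $\omega^\omega$ or $2^\omega$) such that $\xi(z)$ is a \emph{hereditarily infinite-dimensional} continuum when $z\in P$ and fails to be hereditarily infinite-dimensional — while still being infinite-dimensional — when $z\notin P$.

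The key step is the construction of $\xi$, and here I would exploit the well-lor classical fact that there exist hereditarily infinite-dimensional (Henderson) continua, together with the standard coding machinery used elsewhere in the literature on coanalytic hyperspace sets (e.g. the trees/ill-founded-tree reductions behind~\cite{Kr}, \cite{Sam1}, \cite{KK}). Concretely, pick a fixed hereditarily infinite-dimensional continuum $H\subseteq I^\omega$ and a fixed nondegenerate arc (or a Hilbert cube, or any nondegenerate continuum carrying a nonzero finite-dimensional subcontinuum) $A$; to a point $z\in Z$ I would associate a countable ``tree'' of shrinking copies $H_\sigma$ of $H$ indexed by finite sequences, glued along a convergent-to-a-point pattern so that the union is always a continuum, and attach at node $\sigma$ a tiny copy $A_\sigma$ of the finite-dimensional gadget $A$ exactly when the branch through $\sigma$ is ``alive'' according to a canonical monotone coding of the $\Pi^1_1$ set $P$ (so that $z\in P$ iff the associated tree is well-founded, iff only finitely-deep — hence ultimately vanishing — copies of $A$ survive). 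One then arranges that when $z\in P$ the attached finite-dimensional pieces form a null sequence accumulating only onto points of the $H_\sigma$'s, so the total space is hereditarily infinite-dimensional (its only small closed subsets are $0$-dimensional, everything else meeting some $H_\sigma$ in an infinite-dimensional set); and when $z\notin P$ there is an infinite branch along which the $A_\sigma$ persist, whose limit contains a nondegenerate finite-dimensional subcontinuum, destroying hereditary infinite-dimensionality while leaving the whole union infinite-dimensional (it still contains $H$). Continuity of $z\mapsto\xi(z)$ in the Hausdorff metric follows because the diameters of the blocks indexed by sequences of length $\ge m$ tend to $0$ uniformly.

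The main obstacle I anticipate is the ``$z\in P$'' direction: verifying that the continuum $\xi(z)$ is genuinely hereditarily infinite-dimensional, i.e. that \emph{every} nonempty closed subset is either $0$-dimensional or infinite-dimensional. The danger is that a closed subset could slice through the gluing arcs / feet of the $H_\sigma$'s, or through surviving $A_\sigma$'s, producing a $1$-dimensional (or finite positive-dimensional) closed set. This is handled by being careful in the construction: the copies $H_\sigma$ should be attached to the rest of $\xi(z)$ only along single points (so that a closed subset meeting $H_\sigma$ in more than a point already contains a nondegenerate subcontinuum of $H_\sigma$, hence is infinite-dimensional by hereditary infinite-dimensionality of $H$), and when $z\in P$ the finite-dimensional gadgets must be removed ``at finite depth'' so that along every branch they disappear in the limit — this is precisely where well-foundedness of the coding tree is used. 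One also needs the elementary facts that a countable union of a null sequence of compacta accumulating onto a hereditarily infinite-dimensional core is again hereditarily infinite-dimensional, and that hereditary infinite-dimensionality is inherited by the relevant pieces; these are routine but must be stated. Finally, since the problem explicitly notes that it is \emph{unknown} whether $\mathcal{HID}(I^\omega)$ is a coanalytic absorber, I would stop at $\Pi^1_1$-completeness and not attempt strong $\Pi^1_1$-universality; the containment in a $\sigma Z$-set is immediate from Proposition~\ref{t1}(1) once we note $\mathcal{HID}(I^\omega)\subseteq\mathcal D_1(I^\omega)$ (and $\mathcal{HID}(I^\omega)\cap C(I^\omega)\subseteq\mathcal D_2(I^\omega)\cap C(I^\omega)$), which is worth recording even though it is not needed for completeness.
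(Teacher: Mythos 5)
Your framing is fine: coanalyticity follows from Proposition~\ref{p4}, and a reduction landing in $C(I^\omega)$ automatically witnesses hardness in $2^{I^\omega}$. The genuine gap is in the key construction of $\xi$. As you describe it, a nondegenerate copy $A_\sigma$ of the finite-dimensional gadget is \emph{attached} at every ``alive'' node $\sigma$, and for $z\in P$ you only guarantee that such copies occur at finite depth and ``vanish in the limit'' along each branch. That is not enough: if even a single nondegenerate copy of $A$ is literally a subset of $\xi(z)$, then $\xi(z)$ contains a nondegenerate finite-dimensional closed set and is already not hereditarily infinite-dimensional, no matter how small that copy is or how the copies behave asymptotically along branches. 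So for $z\in P$ the gadgets must be entirely absent, which contradicts attaching them ``exactly when the branch through $\sigma$ is alive'': every node of a nonempty well-founded tree is alive in that sense. The only way to rescue the scheme is to let the finite-dimensional set arise solely as a \emph{limit} object along infinite branches, never as an actually attached piece; but then you must control all accumulation. Trees on $\omega$ are infinitely branching, so the siblings of a node accumulate even when the tree is well-founded (and well-founded trees have arbitrarily large countable rank), and you would have to arrange that all such accumulation sets are zero-dimensional while genuine infinite branches, and only they, produce a nondegenerate finite-dimensional limit continuum. None of this machinery is set up in your sketch, and your informal ``removal of gadgets at finite depth'' is not a well-defined (let alone continuous) operation, since well-foundedness is not a Borel condition on the parameter.

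The paper takes a different and much shorter route, following an idea of R.~Pol \cite[Lemma 6.3]{Pol}, which you could adopt instead. Fix a hereditarily infinite-dimensional continuum $L\subset I^\omega$, let $\mathcal L$ be the family of its topological copies and $\mathcal P$ the dense $G_\delta$ family of pseudoarcs in $C(I^\omega)$; take a Cantor set $C\subset I$ with a countable dense subset $Q$. Michael's zero-dimensional selection theorem \cite{Michael} yields a continuous $\sigma:C\to C(I^\omega)$ with $\sigma(x)\in\mathcal L$ for $x\in Q$ and $\sigma(x)\in\mathcal P$ for $x\in C\setminus Q$. Then $\pi(A)=\bigcup_{x\in A}\{x\}\times\sigma(x)$ defines a continuous map $2^C\to 2^{I\times I^\omega}$ reducing the Hurewicz set $2^Q$ (a standard $\Pi^1_1$-complete set) to $\mathcal{HID}(I^\omega)$: if $A\not\subset Q$ some slice is a pseudoarc, a one-dimensional closed subset, while if $A\subset Q$ then $A$ is countable and every closed subset of $\pi(A)$ is a countable union of closed pieces lying in hereditarily infinite-dimensional slices, so it is either zero-dimensional or infinite-dimensional. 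Collapsing the Cantor-set level of $\pi(A)$ to a point gives the reduction into $C(I^\omega)$, and hence both completeness statements at once, sidestepping all the limit and accumulation issues that your tree construction would have to confront.
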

\begin{proof}
The proof of $\Pi^1_1$-hardness will use an idea of R. Pol presented in~\cite[Lemma 6.3]{Pol}.

Take a continuum $L\in \mathcal{HID}(I^\omega)$ and the set $\mathcal L$  of all topological copies   of $L$ in $I^\omega$. Let $\mathcal P$ be the collection of all pseudoarcs in $I^\omega$. Families $\mathcal P$ and $\mathcal L$ are dense in $C(I^\omega)$ and $\mathcal P$ is a $G_\delta$-subset of $C(I^\omega)$. Let $Q$ be a countable dense subset of a Cantor set $C\subset I$.

Let $\varphi:C \to \{\mathcal P, \mathcal L\}$ be a function defined by
\[
      \varphi(x)=
\begin{cases}
  \mathcal P, & \text{for $x\in C\setminus Q$;} \\
    \mathcal L, & \text{for $x\in Q$.}
  \end{cases}
\]

 By~\cite[Corollary 1.61]{Michael},  $\varphi(x)$ has a continuous selection
$$\sigma:C \to C(I^\omega),\quad \sigma(x)\in \varphi(x).$$

Define the map $\pi:2^C\to 2^{I\times I^\omega}\stackrel{\rm top}{=} 2^{I^\omega}$ by
$$\pi(A)=\bigcup_{x\in A}\{x\}\times \sigma(x).$$
Observe that $\pi^{-1}(\mathcal{HID}(I^\omega))= 2^Q$ is the Hurewicz set of all  compacta in $Q$, a standard coanalytic complete set. In other words, $\pi$ is a continuous reduction of  $2^Q$ to $\mathcal{HID}(I^\omega)$.

If we identify to a point the Cantor set level of each $\pi(A)$ and respectively modify $\pi$, then we get a continuous reduction of  $2^Q$ to
$\mathcal{HID}(I^\omega)\cap C(I^\omega)$.

\end{proof}

\

Families $\mathcal{HID}(I^\omega)$ and  $\mathcal{HID}(I^\omega)\cap C(I^\omega)$ are contained,  respectively, in $\mathcal D_2(I^\omega)$ and $\mathcal D_2(I^\omega)\cap C(I^\omega)$  which  are $\sigma Z$-sets in $2^{I^\omega}$ and in $C(I^\omega)$.

We do not know if  $\mathcal{HID}(I^\omega)$ is strongly $\Pi^1_1$-universal in  $2^{I^\omega}$.

\bibliographystyle{amsplain}

\end{document}